\newcommand{\be}{\begin{equation}}
\newcommand{\ee}{\end{equation}}
\newcommand\bea{\begin{eqnarray}}
\newcommand\eea{\end{eqnarray}}
\newcommand{\bean}{\begin{eqnarray*}}
\newcommand{\eean}{\end{eqnarray*}}
\newcommand\bcase{\begin{numcases}{}}
\newcommand\ecase{\end{numcases}}
\begin{document}

\title{Optimal convergence rate of the explicit Euler method for convection-diffusion equations II: high dimensional cases
}
\titlerunning{Optimal convergence rate of the explicit Euler method for convection-diffusion equations}        
\author{Qifeng Zhang \and Jiyuan Zhang \and Zhi-zhong Sun
}
\authorrunning{Q. Zhang, J. Zhang and Z. Sun} 
\institute{                            
		\Letter\; Qifeng Zhang \at Department of Mathematics, Zhejiang Sci-Tech University, Hangzhou, 310018, China; \\ \email{zhangqifeng0504@zstu.edu.cn}
\and Jiyuan Zhang \at Department of Mathematics, Zhejiang Sci-Tech University, Hangzhou, 310018, China\\
               \email{z018283@126.com}
\and Zhi-zhong Sun \at Department of Mathematics, Southeast University, Nanjing, 210096, China\\
               \email{zzsun@seu.edu.cn}
}

\date{Received: date / Accepted: date}

\maketitle

\begin{abstract}
This is the second part of study on the optimal convergence rate of
the explicit Euler discretization in time for the convection-diffusion equations [Appl. Math. Lett. \textbf{131} (2022) 108048]
which focuses on high-dimensional linear/nonlinear cases under Dirichlet or Neumann boundary conditions.
Several new corrected difference schemes are proposed based on the
explicit Euler discretization in temporal derivative and central difference discretization in spatial derivatives.
The priori estimate of the corrected scheme with application to constant convection coefficients is provided at length by the maximum principle and the optimal convergence rate four is proved when the step ratios along each direction equal to $1/6$.
The corrected difference schemes have essentially improved {\rm \textbf{CFL}} condition and the numerical accuracy comparing with the classical difference schemes. Numerical examples involving two-/three-dimensional linear/nonlinear problems under Dirichlet/Neumann boundary conditions such as the Fisher equation, the Chafee-Infante equation, the Burgers' equation and classification to name a few substantiate the good properties claimed for the corrected difference scheme.

\keywords{Convection-diffusion equation \and Explicit Euler method \and Priori estimate \and {\rm \textbf{CFL}} condition \and Optimal convergence rate}
 \subclass{65M06 \and 65M12}
\end{abstract}

\section{Introduction}\label{Sec1}
\setcounter{equation}{0}
Historically, the explicit Euler method has been one of the most classical and oldest numerical methods, which is compulsory part in our textbooks and monographs, see e.g., \cite{HNW1987,LT2003,Sun2022,Th1995}.
It and other improved versions were frequently used as the time integrator for the numerical solutions of ordinary differential equations or partial differential equations \cite{KK1991,MT2006,H1988,VR1999,SR2002}.

However, most of the time we ignore the fact that explicit Euler method could generate superconvergence with a specific step-ratio when it is applied to solve the convection-diffusion problems \cite{ZZS2022}.
Subsequently to the result in \cite{ZZS2022}, as the second part of this series of study, we further investigate the optimal convergence rate of the explicit Euler method with application to the convection-diffusion equations in two-dimensional and three-dimensional cases.

In this paper, we will first consider the numerical procedure for the initial-boundary value problem of the anisotropic convection-diffusion equation with variable convection coefficients as follows
\begin{subequations}
\label{eqn1}
\begin{numcases}{}
u_{t}= \nabla \!\cdot (A \nabla u)  + B \cdot \nabla u + f(\mathbf{x},t),\quad  \mathbf{x}\in \Omega,\; t\in (0,T],  \label{eqn1a} \\
u(\mathbf{x},0)=\varphi(\mathbf{x}),\quad \mathbf{x}\in \bar{\Omega},\label{eqn1b} \\
u(\mathbf{x},t)=\alpha(\mathbf{x},t), \quad \mathbf{x}\in\Gamma,\quad t\in (0,T], \label{eqn1c}
\end{numcases}
\end{subequations}
where $\mathbf{x} \in \Omega \subset \mathbb{R}^2$ or $\mathbb{R}^3$, $A$ is a diagonal matrix with positive diagonal elements and $B$ is a two- or three-dimensional vector-value function. $\bar{\Omega}$ is the closure of $\Omega$ and $\Gamma$ is the boundary of $\Omega$. $f(\mathbf{x},t)$, $\varphi(\mathbf{x})$ and $\alpha(\mathbf{x},t)$ are given smooth functions satisfying the consistent conditions. Then we will further move our attention to a more general nonlinear convection-diffusion equation
\begin{subequations}
\label{eqn4.1}
\begin{numcases}{}
u_{t}+\nabla\! \cdot \mathbf{F}(u)=\nabla\! \cdot (A \nabla u)+r(u),\quad \mathbf{x}\in \Omega,\; t\in (0,T],  \label{eqn4.1a} \\
u(\mathbf{x},0)=\varphi(\mathbf{x}),\quad \mathbf{x}\in \bar{\Omega},\label{eqn4.1b} \\
u(\mathbf{x},t)=\alpha(\mathbf{x},t), \quad \mathbf{x}\in\Gamma,\quad t\in (0,T], \label{eqn4.1c}
\end{numcases}
\end{subequations}
where the flux $\mathbf{F}(u) = (f(u),g(u))$ or $(f(u),g(u),h(u))$, which occurs in many applications such as semi-linear or quasi-linear problems:
the Fisher equation \cite{QS1998}($ u_t=\Delta u + u(1-u)$), the Chafee-Infante equation \cite{CI1974} ($u_t=\Delta u + u(1-u^2)$), the scalar viscous Burgers' equation \cite{MS2007} ($u_t+ uu_x+uu_y  = \mu\Delta u$, $\mu\neq0$ denotes the viscous coefficient), and so on.

Though there have been plenty of discussions for numerical methods to solve these issues, we here only focus on the simplest numerical discretization of the problem \eqref{eqn1} or \eqref{eqn4.1} composing of the standard centered finite difference method for the spatial derivatives and the forward Euler method for the temporal derivative because of its simplicity, time-saving and easy to implement. Scholars generally consider the scheme resulting from such discretization as a low order, conditionally stable and therefore impractical numerical method. However, by applying a corrected difference technique to the local truncation errors, we recover the optimal convergence rate four when the specific step-ratios are utilized and the exact solution satisfies a certain of regularity. Our finding makes the corrected scheme suitable to simulate the physical phenomenon with a very high precision. There are several efforts and novelty in the current paper for our corrected difference scheme. Specifically:
\begin{itemize}
  \item [{\rm (I)}] The explicit Euler method combines with a corrected second-order centered difference discretization exports to a superconvergent numerical scheme when the step-ratios $r_x = r_y = r_z=1/6$, where $r_x$, $r_y$ and $r_z$ denote the step-ratios along $x$-, $y$- and $z$-direction, respectively. Moreover, the corrected difference scheme owns higher numerical accuracy compared with the standard centered difference scheme even if the step-ratios $(r_x, r_y,r_z)\neq(1/6,1/6,1/6)$.
  \item [{\rm (II)}] The priori estimate is demonstrated for the corrected difference scheme under the infinite norm based on the maximum principle in the case of constant convection coefficients, which naturally leads to {\rm{\textbf{CFL}}} condition and optimal convergence for the corrected scheme. In detail, the stability for the two-dimensional diffusion problem is doubled compared with the classical difference scheme. For three-dimensional diffusion problem, the stability slightly decreases but shows new stable region, see also Figure \ref{sketch2} in Section \ref{SecDiffu}.
  \item [{\rm (III)}] Most notably, we discover that the corrected difference scheme could be extended to solve more general high-dimensional nonlinear convection-diffusion equations such as the Fisher equation, the Chafee-Infante equation, the viscous Burgers' equation and classification to name a few.
  \item [{\rm (IV)}] It is worth noting that the the corrected difference scheme is fully explicit and very convenient to be implemented. Moreover, it is not constrained by the boundary conditions.
  Numerical examples in a variety of scenarios are carried out for linear/nonlinear problems under Dirichlet and Neumann boundary conditions to confirm the designed convergence rate.
\end{itemize}

Throughout the whole paper, we always assume that the exact solutions to \eqref{eqn1} or \eqref{eqn4.1} are sufficiently smooth in the sense that $u(\mathbf{x},t)\in \mathcal{C}^{6,6,4}(\bar\Omega\times [0,T])$ for two-dimensional problem and $u(\mathbf{x},t)\in \mathcal{C}^{6,6,6,4}(\bar\Omega\times [0,T])$ for three-dimensional problem.

The rest of the paper is arranged as follows. In Section \ref{Sec2}, we derive a corrected explicit difference scheme for a linear two-dimensional convection-diffusion equation involving several special cases. In Section \ref{Sec3}, the priori estimate with constant convection coefficients is discussed and the optimal convergence rate with fourth-order accuracy is proved.
Extending the corrected difference scheme to the nonlinear convection-diffusion problem with the Dirichlet boundary conditions  and  the Neumann boundary conditions are available in Section \ref{Sec4}  and Section \ref{Sec5} respectively. Our corrected technique is also applied to three-dimensional convection-diffusion problem in Section \ref{SecDiffu}.
Extensive numerical examples including linear/nonlinear cases under Dirichlet/Neumann boundary conditions are carried out to verify the theoretical results in Section \ref{Sec6} before a short concluding remarks in Section \ref{Sec7}.

\section{The derivation of the explicit difference scheme}\label{Sec2}
\setcounter{equation}{0}
In this section, we focus on the construction  of the numerical scheme to solve two-dimensional convection-diffusion problem \eqref{eqn1}. Here $\mathbf{x}=(x,y)$, $A = {\rm \mathbf{diag}}(a,b)$ with $a,b>0$ constant diffusion coefficients and $B = (c(\mathbf{x}),d(\mathbf{x}))$ the convection coefficients. The spatial domain is set to $\Omega = (L_1,R_1)\times(L_2,R_2)\subset \mathbb{R}^2$.

We start by introducing some basic notations in the context of finite difference method.
Firstly, the domain $\bar{\Omega}\times[0,T]$ is subdivided into a number of small elements by passing
orthogonal lines through the region. For this purpose,
 we take three positive integers $m_1$, $m_2$, $n$ and let $h_x = (R_1-L_1)/m_1$, $h_y = (R_2-L_2)/m_2$, $\tau = T/n$, $x_i =L_1 +ih_x$, $0\leq i \leq m_1$, $y_j = L_2+jh_y$, $0\leq j \leq m_2$, $t_k = k\tau$, $0 \leq k \leq n$.
Define the step-ratios $r_x = a\tau /h_x^2$, $r_y = b\tau/h_y^2$.
Denote $\Omega_{h\tau}=\{(\mathbf{x}_{ij},t_k)\,|\,0\leq i\leq m_1, \ 0\leq j\leq m_2, \ 0\leq k\leq n\}$ with $\mathbf{x}_{ij} = (x_i,y_j)$, $\omega = \{(i,j)\,|\,\mathbf{x}_{ij}\in \Omega\}$, $\gamma = \{(i,j)\,|\,\mathbf{x}_{ij}\in \Gamma\}$ and
$\bar\omega =\omega \cup \gamma.$
For any grid function
$v=\{v_{ij}^k\,|\, 0\leq i\leq m_1, 0\leq j \leq m_2, 0\le k\le n\}$ on $\Omega_{h\tau},$
define $\delta_t v_{ij}^{k+\frac{1}{2}} = (v_{ij}^{k+1}-v_{ij}^k)/\tau$,
 $\delta_x v_{i+1/2,j}^{k} = (v_{i+1,j}^k-v_{ij}^k)/h_x$, $\delta_x^2 v_{ij}^{k} = (v_{i+1,j}^k-2v_{ij}^k+v_{i-1,j}^k)/h_x^2$,
 $\Delta_xv_{ij}^{k} = (v_{i+1,j}^k-v_{i-1,j}^k)/(2h_x)$, $\|v\|_{\infty} = \max_{(i,j)\in \bar{\omega}}\limits|v_{ij}|$.
 Similarly, we could define $\delta_y v_{i,j+1/2}^{k}$, $\delta_y^2 v_{ij}^{k}$ and $\Delta_yv_{ij}^{k}$.

Firstly, using \eqref{eqn1a}, we have
\begin{align}
  u_{tt} = & \;a(u_t)_{xx} + b(u_t)_{yy} + c(u_t)_x + d(u_t)_y +f_t(\mathbf{x},t)\notag\\
         = & \;a \Big(a u_{xx} + bu_{yy} + c(\mathbf{x})u_x + d(\mathbf{x})u_y + f(\mathbf{x},t)\Big)_{xx}\notag\\
           & \;+ b\Big(a  u_{xx} + bu_{yy} + c(\mathbf{x})u_x + d(\mathbf{x})u_y + f(\mathbf{x},t)\Big)_{yy}\notag\\
           & \;+ c\Big(a  u_{xx} + bu_{yy} + c(\mathbf{x})u_x + d(\mathbf{x})u_y + f(\mathbf{x},t)\Big)_x\notag\\
           & \;+ d\Big(a  u_{xx} + bu_{yy} + c(\mathbf{x})u_x + d(\mathbf{x})u_y + f(\mathbf{x},t)\Big)_y \notag\\
           & \;+ f_t(\mathbf{x},t)\notag\\
         = & \; a^2u_{xxxx} + 2abu_{xxyy} + b^2u_{yyyy} \notag\\
           & \; + 2ac(\mathbf{x})u_{xxx} + 2ad(\mathbf{x})u_{xxy} + 2bc(\mathbf{x})u_{xyy} + 2bd(\mathbf{x})u_{yyy}\notag\\
           & \; +\Big(2ac_x(\mathbf{x})+c^2(\mathbf{x})\Big)u_{xx}  + \Big(2bd_y(\mathbf{x})+bd_{yy}(\mathbf{x})\Big)u_{yy}\notag\\
           & \; + \Big(2ad_x(\mathbf{x})+2bc_y(\mathbf{x})+2c(\mathbf{x})d(\mathbf{x})\Big)u_{xy}\notag\\
           & \; +\Big(ac_{xx}(\mathbf{x}) + bc_{yy}(\mathbf{x}) + c(\mathbf{x})c_x(\mathbf{x}) + d(\mathbf{x})c_y(\mathbf{x})\Big)u_x\notag\\
           & \; +\Big(ad_{xx}(\mathbf{x}) + bd_{yy}(\mathbf{x}) + c(\mathbf{x})d_x(\mathbf{x}) + d(\mathbf{x})d_y(\mathbf{x})\Big)u_y\notag\\
           & \; + af_{xx}(\mathbf{x},t) + bf_{yy}(\mathbf{x},t) + c(\mathbf{x})f_x(\mathbf{x},t)+d(\mathbf{x})f_y(\mathbf{x},t)\notag\\
           & \; + f_t(\mathbf{x},t).\label{2Deqn2}
\end{align}

Define the grid function $$U=\{ U_{ij}^k\,|\,0\leq i\leq m_1,\; 0 \leq j \leq m_2,\; 0\leq k\leq n\}$$ with $U_{ij}^k=u(\mathbf{x}_{ij},t_k)$.
Considering \eqref{eqn1a} at the point $(\mathbf{x}_{ij},t_k)$ and using the Taylor formula, we have
\begin{align}
 & u_{t}(\mathbf{x}_{ij},t_k)=a  u_{xx}(\mathbf{x}_{ij},t_k) + bu_{yy}(\mathbf{x}_{ij},t_k) + c_{ij}u_x(\mathbf{x}_{ij},t_k) + d_{ij}u_y(\mathbf{x}_{ij},t_k) + f_{ij}^k,\notag\\
 & \qquad \qquad\qquad\qquad\qquad\qquad(i,j)\in \omega,\; 0\leq k\leq n, \label{2Deqn3}
\end{align}
where
\begin{align*}
   c_{ij} = c(\mathbf{x}_{ij}),\quad d_{ij} = d(\mathbf{x}_{ij}),\quad f_{ij}^k = f(\mathbf{x}_{ij},t_k).
\end{align*}

The forward difference quotient is utilized for the discretization of the temporal derivative and the central difference quotient for the discretization of the spatial derivatives in \eqref{2Deqn3}, which results in
\begin{align}
   &\;\delta_t U_{ij}^{k+\frac{1}{2}} - a\delta_x^2U_{ij}^k - b\delta_y^2 U_{ij}^k -c_{ij}\Delta_xU_{ij}^k -d_{ij}\Delta_yU_{ij}^k\notag\\
  =&\; f_{ij}^k+\frac{\tau}{2}u_{tt}(\mathbf{x}_{ij},t_k) -\frac{a h_x^2}{12}u_{xxxx}(\mathbf{x}_{ij},t_k)- \frac{bh_y^2}{12}u_{yyyy}(\mathbf{x}_{ij},t_k)\notag\\
   &\;-\frac{h_x^2}{6}c_{ij}u_{xxx}(\mathbf{x}_{ij},t_k)-\frac{h_y^2}{6}d_{ij}u_{yyy}(\mathbf{x}_{ij},t_k) + O(\tau^2+h_x^4+h_y^4)\notag\\
  =&\; p_{ij}^k+\frac{ah_x^2}{2}\Big(r_x-\frac{1}{6}\Big)u_{xxxx}(\mathbf{x}_{ij},t_k) + \frac{bh_y^2}{2}\Big(r_y-\frac{1}{6}\Big)u_{yyyy}(\mathbf{x}_{ij},t_k) \notag\\
   &\; + c_{ij}h_x^2\Big(r_x-\frac{1}{6}\Big)u_{xxx}(\mathbf{x}_{ij},t_k) + d_{ij}h_y^2\Big(r_y-\frac{1}{6}\Big)u_{yyy}(\mathbf{x}_{ij},t_k)\notag\\
   &\; + \frac{\tau}{2}\left[(2ac_x + c^2)_{ij}\delta_x^2 U_{ij}^k + (2ad_x+2bc_y + 2cd)_{ij}\Delta_x\Delta_yU_{ij}^k+(2bd_y +d^2)_{ij}\delta_y^2 U_{ij}^k\right.\notag\\
   &\; + \left.(ac_{xx}+bc_{yy}+cc_x+dc_y)_{ij}\Delta_x U_{ij}^k + (ad_{xx} + bd_{yy} + cd_x + dd_y)_{ij}\Delta_yU_{ij}^k\right]\notag\\
   &\; + \tau ab\delta_x^2\delta_y^2 U_{ij}^k + \tau ad_{ij}\delta_x^2\Delta_y U_{ij}^k +\tau bc_{ij}\delta_y^2\Delta_xU_{ij}^k + O(\tau^2+\tau h_x^2 + \tau h_y^2+h_x^4+h_y^4), \label{2Deqn4}
\end{align}
where \eqref{2Deqn2} is used in the second inequality and
\begin{align*}
  p_{ij}^k = f_{ij}^k +\frac{\tau}{2}\left[a(f_{xx})_{ij}^k + b(f_{yy})_{ij}^k + (c f_{x})_{ij}^k + (d f_{y})_{ij}^k + (f_t)_{ij}^k \right]
\end{align*}
with $(f_x)_{ij}^k = f_x(\mathbf{x}_{ij},t_k),\; (f_y)_{ij}^k = f_y(\mathbf{x}_{ij},t_k),\;(f_{xx})_{ij}^k = f_{xx}(\mathbf{x}_{ij},t_k),\; (f_{yy})_{ij}^k = f_{yy}(\mathbf{x}_{ij},t_k),\;(f_t)_{ij}^k = f_t(\mathbf{x}_{ij},t_k).$

Omitting the small terms in \eqref{2Deqn4},
a finite difference scheme for \eqref{eqn1} reads
\begin{subequations}
\label{2Deqn1}
\begin{numcases}{}
  \delta_tu_{ij}^{k+\frac{1}{2}} = \Big[a+\frac{\tau}{2}(2ac_x+c^2)_{ij}\Big]\delta_x^2u_{ij}^k + \Big[b+\frac{\tau}{2}(2bd_y+d^2)_{ij}\Big]\delta_y^2u_{ij}^k\notag\\
  \quad +\Big[c_{ij} +\frac{\tau}{2}(ac_{xx}+bc_{yy}+cc_x+c_yd)_{ij}\Big]\Delta_x u_{ij}^k+\Big[d_{ij} +\frac{\tau}{2}(ad_{xx}+bd_{yy}+cd_x+dd_y)_{ij}\Big]\Delta_y u_{ij}^k\notag\\
  \quad +\frac{\tau}{2}(2ad_x+2bc_y+2cd )_{ij}\Delta_x\Delta_y u_{ij}^k
  +\tau\Big( ab\delta_x^2\delta_y^2 u_{ij}^k + \tau ad_{ij} \delta_x^2 \Delta_yu_{ij}^k +  bc_{ij}\delta_y^2\Delta_xu_{ij}^k\Big) + p_{ij}^k,\notag\\
  \qquad \qquad \qquad \qquad \qquad (i,j)\in \omega,\; 0\leq k \leq n-1,\label{2Deqn1a} \\
  u_{ij}^0=\varphi(\mathbf{x}_{ij}), \qquad \qquad \ \! (i,j)\in \bar\omega,\; \label{2Deqn1b}\\
  u_{ij}^k=\alpha(\mathbf{x}_{ij},t_k),\qquad\quad \! (i,j)\in \gamma, \; 1 \leq k \leq n.\label{2Deqn1c}
\end{numcases}
\end{subequations}

It is easy to see that the local truncation error for \eqref{2Deqn1a} is 
\begin{subequations}
\label{Local_err}
\begin{numcases}{}
  O(\tau^2+h_x^4+h_y^4), {~~\rm if~~}  r_x = r_y = 1/6,  \label{Local_erra}\\
  O(\tau^2+h_x^2+h_y^2), {~~\rm otherwise.~~}\label{Local_errd}
\end{numcases}
\end{subequations}

\begin{remark}
The difference scheme \eqref{2Deqn1} has improved several classical numerical schemes, for example:
  \begin{itemize}
    \item [{\rm \textbf{(I)}.}] $c=d=0:$ the difference scheme \eqref{2Deqn1} reduces to
    \begin{subequations}
\label{2Deqn5}
\begin{numcases}{}
  \delta_tu_{ij}^{k+\frac{1}{2}} = a\delta_x^2u_{ij}^k + b\delta_y^2u_{ij}^k
  +\tau ab\delta_x^2\delta_y^2 u_{ij}^k + q_{ij}^k, \qquad (i,j)\in \omega,\; 0\leq k \leq n-1,\label{2Deqn5a} \\
  u_{ij}^0=\varphi({\mathbf{x}_{ij}}), \quad \qquad \ \! (i,j)\in \bar\omega,\; \label{2Deqn5b}\\
  u_{ij}^k=\alpha(\mathbf{x}_{ij},t_k),\qquad (i,j)\in \gamma, \; 1 \leq k \leq n,\label{2Deqn5c}
\end{numcases}
\end{subequations}
where
\begin{align*}
  q_{ij}^k = f_{ij}^k +\frac{\tau}{2}\left[a(f_{xx})_{ij}^k + b(f_{yy})_{ij}^k + (f_t)_{ij}^k \right].
\end{align*}
Comparing with the classical Euler difference scheme
    \begin{subequations}
\label{2Deqn6}
\begin{numcases}{}
  \delta_tu_{ij}^{k+\frac{1}{2}} = a\delta_x^2u_{ij}^k + b\delta_y^2u_{ij}^k
   + f_{ij}^k, \qquad (i,j)\in \omega,\; 0\leq k \leq n-1,\label{2Deqn6a} \\
  u_{ij}^0=\varphi(\mathbf{x}_{ij}), \quad \qquad \ \! (i,j)\in \bar \omega,\; \label{2Deqn6b}\\
  u_{ij}^k=\alpha(\mathbf{x}_{ij},t_k),\qquad (i,j)\in \gamma, \; 1 \leq k \leq n,\label{2Deqn6c}
\end{numcases}
\end{subequations}
the local truncation error for the difference scheme \eqref{2Deqn5} is the same as \eqref{Local_err}.
However, the local truncation error for the classical Euler difference scheme \eqref{2Deqn6}
is only two globally whatever the step-ratios are.
    \item [{\rm \textbf{(II)}.}] $c=d={\rm constant:}$ the difference scheme \eqref{2Deqn1} simplifies into
    \begin{subequations}
\label{2Deqn7}
\begin{numcases}{}
  \delta_tu_{ij}^{k+\frac{1}{2}} = \left(a+\frac{\tau}{2}c^2\right)\delta_x^2u_{ij}^k + \left(b+\frac{\tau}{2}d^2\right)\delta_y^2u_{ij}^k
    +c\Delta_x u_{ij}^k +d\Delta_y u_{ij}^k+\tau cd\Delta_x\Delta_y u_{ij}^k\notag\\
  \quad +\tau \Big( ab\delta_x^2\delta_y^2 u_{ij}^k + ad  \delta_x^2 \Delta_y u_{ij}^k + bc \delta_y^2\Delta_xu_{ij}^k\Big) + r_{ij}^k, \quad (i,j)\in \omega,\; 0\leq k \leq n-1,\label{2Deqn7a} \\
  u_{ij}^0=\varphi(\mathbf{x}_{ij}), \quad \qquad \ \! (i,j)\in \bar \omega,\; \label{2Deqn7b}\\
  u_{ij}^k=\alpha(\mathbf{x}_{ij},t_k),\qquad (i,j)\in \gamma, \; 1 \leq k \leq n,\label{2Deqn7c}
\end{numcases}
\end{subequations}
where
\begin{align*}
  r_{ij}^k = f_{ij}^k +\frac{\tau}{2}\left[a(f_{xx})_{ij}^k + b(f_{yy})_{ij}^k + c(f_{x})_{ij}^k + d(f_{y})_{ij}^k + (f_t)_{ij}^k \right].
\end{align*}
  \end{itemize}
  The classical Euler difference scheme for the problem \eqref{eqn1} is
\begin{subequations}
\label{2Deqn8}
\begin{numcases}{}
  \delta_tu_{ij}^{k+\frac{1}{2}} - a\delta_x^2u_{ij}^k - b\delta_y^2u_{ij}^k
    -c\Delta_x u_{ij}^k - d\Delta_y u_{ij}^k   = f_{ij}^k, \quad (i,j)\in \omega,\; 0\leq k \leq n-1,\label{2Deqn8a} \\
  u_{ij}^0=\varphi(\mathbf{x}_{ij}), \quad \qquad \ \! (i,j)\in \bar\omega,\; \label{2Deqn8b}\\
  u_{ij}^k=\alpha(\mathbf{x}_{ij},t_k),\qquad (i,j)\in \gamma, \; 0 \leq k \leq n.\label{2Deqn8c}
\end{numcases}
\end{subequations}
Comparing \eqref{2Deqn7} with \eqref{2Deqn8}, similar theoretical results can be obtained for the difference scheme \eqref{2Deqn7}.

We call each difference scheme \eqref{2Deqn1}, \eqref{2Deqn5} or \eqref{2Deqn7} as the \textbf{corrected difference scheme}.
\end{remark}

\section{The priori estimate and the optimal convergence}\label{Sec3}
\setcounter{equation}{0}
For the sake of brevity, we take the difference scheme \eqref{2Deqn7} as an example to illustrate the priori estimate.
In the meantime, the problem \eqref{eqn1} becomes
\begin{subequations}
\label{eqn2}
\begin{numcases}{}
u_{t}=a  u_{xx} + bu_{yy} + c u_x + d u_y + f(\mathbf{x},t),\quad  \mathbf{x} \in \Omega,\; t\in (0,T],  \label{eqn2a} \\
u(\mathbf{x},0)=\varphi(\mathbf{x}),\quad \mathbf{x}\in \bar{\Omega},\label{eqn2b} \\
u(\mathbf{x},t)=\alpha(\mathbf{x},t), \quad \mathbf{x}\in\Gamma,\quad t\in (0,T]. \label{eqn2c}
\end{numcases}
\end{subequations}

\begin{theorem}[Priori estimate]\label{thm1}
 Let $\{u^k_{ij}\,|\,0\leq i\leq m_1,\ 0\leq j \leq m_2, \ 0 \leq k \leq n\}$ be the solution of the difference scheme \eqref{2Deqn7} with the constraint $\alpha (\mathbf{x}, t)\equiv 0$.
 When
\begin{subequations}
\label{CFL_cond}
\begin{numcases}{}
\max\{r_x, r_y\} \leq 1/2,\label{CFL_cond1}\\
|c|h_x \leq a \cdot \min\Big\{2, \sqrt{(1-2r_x)(1-2r_y)/2}\Big/r_x\Big\},\label{CFL_cond2}\\
|d|h_y \leq b \cdot \min\Big\{2, \sqrt{(1-2r_x)(1-2r_y)/2}\Big/r_y\Big\},\label{CFL_cond3}
\end{numcases}
\end{subequations}
it holds
\begin{equation*}
\|u^k\|_\infty\leq \|\varphi\|_{\infty}+\tau\sum\limits_{l=0}^{k-1}\|r^l\|_{\infty},\quad 1\leq k\leq n.
\end{equation*}
 \end{theorem}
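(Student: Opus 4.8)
The plan is to rewrite the single time step of \eqref{2Deqn7} as an explicit nine-point formula with nonnegative weights and then to argue by the discrete maximum principle. Since $\delta_t u_{ij}^{k+1/2}=(u_{ij}^{k+1}-u_{ij}^{k})/\tau$, multiplying \eqref{2Deqn7a} by $\tau$ and moving $u_{ij}^{k}$ to the right produces, for every interior node $(i,j)\in\omega$,
\begin{equation*}
u_{ij}^{k+1}=\sum_{p=-1}^{1}\sum_{q=-1}^{1}A_{p,q}\,u_{i+p,\,j+q}^{k}+\tau r_{ij}^{k},
\end{equation*}
where the nine weights $A_{p,q}$ depend only on $r_x,r_y$ and on the convection data through the two dimensionless numbers $C:=c\tau/h_x$ and $D:=d\tau/h_y$. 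If one checks that $A_{p,q}\ge 0$ for all $p,q$ and that $\sum_{p,q}A_{p,q}=1$, then taking absolute values, using $|u_{i+p,j+q}^{k}|\le\|u^{k}\|_\infty$ (the stencil may reach $\gamma$, but that is harmless), and using $u_{ij}^{k+1}=0$ at the boundary nodes $(i,j)\in\gamma$ (because $\alpha\equiv 0$), one gets $\|u^{k+1}\|_\infty\le\|u^{k}\|_\infty+\tau\|r^{k}\|_\infty$ for $0\le k\le n-1$; summing this from $0$ to $k-1$ and using $u^{0}=\varphi$ yields the assertion.

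The computational heart of the proof is assembling the $A_{p,q}$. After expanding $\delta_x^2,\delta_y^2,\Delta_x,\Delta_y$ and the four mixed operators $\Delta_x\Delta_y,\ \delta_x^2\delta_y^2,\ \delta_x^2\Delta_y,\ \delta_y^2\Delta_x$ into their stencils and collecting, I expect the weights to organize as
\begin{align*}
A_{\pm1,\pm1}&=\Bigl(r_x\pm\tfrac{C}{2}\Bigr)\Bigl(r_y\pm\tfrac{D}{2}\Bigr),\qquad
A_{0,0}=(1-2r_x)(1-2r_y)-C^2-D^2,\\
A_{\pm1,0}&=\Bigl(r_x\pm\tfrac{C}{2}\Bigr)(1-2r_y)+\tfrac{C^2}{2},\qquad
A_{0,\pm1}=\Bigl(r_y\pm\tfrac{D}{2}\Bigr)(1-2r_x)+\tfrac{D^2}{2},
\end{align*}
and that $\sum_{p,q}A_{p,q}=1$ follows because every difference operator annihilates constants (equivalently, by a direct summation of the nine entries).

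Nonnegativity of the nine weights is precisely where \eqref{CFL_cond} is used. Observe that $C=(c h_x/a)\,r_x$ and $D=(d h_y/b)\,r_y$, so the bound $|c|h_x\le 2a$ contained in \eqref{CFL_cond2} gives $|C|\le 2r_x$, hence $r_x\pm\tfrac{C}{2}\ge 0$; likewise $|D|\le 2r_y$ and $r_y\pm\tfrac{D}{2}\ge0$ from \eqref{CFL_cond3}. Combined with $r_x,r_y\le 1/2$ from \eqref{CFL_cond1}, this makes all four corner weights and all four edge weights nonnegative. The remaining bound $|c|h_x\le a\sqrt{(1-2r_x)(1-2r_y)/2}\big/r_x$ in \eqref{CFL_cond2} reads $C^2\le\tfrac12(1-2r_x)(1-2r_y)$, and similarly $D^2\le\tfrac12(1-2r_x)(1-2r_y)$ from \eqref{CFL_cond3}; adding these gives $C^2+D^2\le(1-2r_x)(1-2r_y)$, i.e. $A_{0,0}\ge 0$. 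This completes the coefficient verification, and the maximum-principle estimate above then closes the induction on $k$.

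The only genuine difficulty is the bookkeeping in the second step: the four mixed-difference corrections $\tau^2 ab\,\delta_x^2\delta_y^2$, $\tau^2 cd\,\Delta_x\Delta_y$, $\tau^2 ad\,\delta_x^2\Delta_y$, $\tau^2 bc\,\delta_y^2\Delta_x$ each contribute with mixed signs over up to nine nodes, and one must group them correctly to see the factored forms $A_{\pm1,\pm1}=(r_x\pm C/2)(r_y\pm D/2)$, without which the sign conditions are hard to read off and the precise shape of \eqref{CFL_cond} looks unmotivated. Once the factorizations are in place, extracting the two bounds on $|C|$ and $|D|$ from \eqref{CFL_cond} and running the induction are routine.
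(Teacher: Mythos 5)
Your proposal is correct and follows essentially the same route as the paper: rewriting \eqref{2Deqn7a} as an explicit nine-point formula with weights summing to one, checking their nonnegativity under \eqref{CFL_cond}, and closing with the maximum principle and recursion. Your weights $A_{p,q}$ agree with the paper's $S_1,\dots,S_9$ once one notes that the term $\frac{c^2}{2a^2r_x^2}h_x^2$ in the paper's $S_1$--$S_4$ is evidently a typo for $\frac{c^2r_x^2}{2a^2}h_x^2=\frac{C^2}{2}$, which is exactly what you wrote.
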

\begin{proof}
The difference scheme \eqref{2Deqn7a} can be rewritten as
\begin{align}
 u_{ij}^{k+1}= &\; S_1u_{i+1,j}^{k} +S_2u_{i-1,j}^{k}+S_3u_{i,j+1}^{k} +S_4u_{i,j-1}^{k}
 +S_5 u_{ij}^{k}\notag\\
& +S_6 u_{i+1,j+1}^{k}+S_7 u_{i+1,j-1}^k+S_8  u_{i-1,j+1}^{k} +S_9  u_{i-1,j-1}^{k}+\tau r_{ij}^{k},\notag\\
&  \qquad \qquad (i, j)\in\omega,\; 0\le k\le n-1,\label{2Deqn7bb}
\end{align}
where
\begin{align*}
  & S_1 =  (1-2r_y)\Big(1+\frac{c}{2a}h_x\Big)r_x+\frac{c^2}{2a^2r_x^2}h_x^2 ,\quad
  S_2 =  (1-2r_y)\Big(1-\frac{c}{2a}h_x\Big)r_x+\frac{c^2}{2a^2r_x^2}h_x^2 ,\\
  & S_3 =  (1-2r_x)\Big(1+\frac{d}{2b}h_y\Big)r_y+\frac{d^2}{2b^2r_y^2}h_y^2 ,\quad S_4 =  (1-2r_x)\Big(1-\frac{d}{2b}h_y\Big)r_y+\frac{d^2}{2b^2r_y^2}h_y^2 ,\\
  & S_5 =  (1-2r_x)(1-2r_y)- \frac{c^2r_x^2}{a^2}h_x^2-\frac{d^2r_y^2}{b^2}h_y^2,\\
  & S_6 = \Big(1+\frac{c}{2a}h_x\Big)\Big(1+\frac{d}{2b}h_y\Big)r_xr_y,\quad
   S_7 = \Big(1+\frac{c}{2a}h_x\Big)\Big(1-\frac{d}{2b}h_y\Big)r_xr_y,\\
  & S_8 = \Big(1-\frac{c}{2a}h_x\Big)\Big(1+\frac{d}{2b}h_y\Big)r_xr_y,\quad
   S_9 = \Big(1-\frac{c}{2a}h_x\Big)\Big(1-\frac{d}{2b}h_y\Big)r_xr_y.
\end{align*}
When \eqref{CFL_cond} holds,
all the coefficients $S_i\ (i=1,2,\cdots,9)$ on the right hand side of \eqref{2Deqn7bb} are nonnegative. Meanwhile $\sum_{i=1}^9 S_i = 1.$ Therefore it holds
 \begin{align*}
   |u_{ij}^{k+1} | \leq \|u^k\|_{\infty} + \tau\|r^k\|_{\infty},\quad (i,j)\in \omega,\;0\le k\le n-1.
 \end{align*}
   Whence
  \begin{align*}
    \|u^{k+1} \|_{\infty} \leq \|u^k\|_{\infty} + \tau\|r^k\|_{\infty},\quad 0\le k\le n-1.
  \end{align*}
  By recursion, we have
  \begin{align*}
    \|u^k\|_{\infty} \leq \|u^0\|_{\infty} + \tau \sum_{l=0}^{k-1}\|r^l\|_{\infty}= \|\varphi\|_{\infty} + \tau \sum_{l=0}^{k-1}\|r^l\|_{\infty},  \quad 1\leq k \leq n.
  \end{align*}
\end{proof}
\begin{remark}
 Based on the analysis in the above, we have the following conclusions.
  \begin{itemize}
    \item   The condition \eqref{CFL_cond} is called as Courant-Friedrichs-Lewy ({\rm \textbf{CFL}}) condition. The proof above implies that the difference scheme  \eqref{2Deqn7} is stable under {\rm \textbf{CFL}} condition \eqref{CFL_cond}.
    \item   When $c=d=0$ in \eqref{eqn2}, {\rm \textbf{CFL}} condition \eqref{CFL_cond} is simplified to \eqref{CFL_cond1}. {\rm \textbf{CFL}} condition \eqref{CFL_cond1} of the corrected difference scheme \eqref{2Deqn5} for the diffusion problem
           obviously improves that of \eqref{2Deqn6}, for which {\rm \textbf{CFL}} condition is $r_x+r_y \leq 1/2$.
  \end{itemize}
\end{remark}
Furthermore, we have the convergence result.
\begin{theorem}\label{thm2}
 Let $\{U^k_{ij}\,|\,0\leq i\leq m_1, \ 0\leq j\leq m_2, \ 0 \leq k \leq n\}$ be the solution of \eqref{eqn2}
 and $\{u^k_{ij}\,|\,0\leq i\leq m_1,\ 0\leq j \leq m_2, \ 0 \leq k \leq n\}$ be the solution of the difference scheme \eqref{2Deqn7}. Denote
$e^k_{ij}=U^k_{ij}-u^k_{ij},\; (0\leq i\leq m_1,\ 0 \leq j \leq m_2, \ 0 \leq k \leq n).$
When \eqref{CFL_cond} holds,
then there is a constant $c_1$ such that
\begin{equation*}
\|e^k\|_\infty\leq \left\{
\begin{array}{ll}
 c_1(\tau^2+h_x^4+h_y^4),\quad  & {\rm \quad if \quad} r_x=r_y=\frac{1}{6},\quad  0 \leq k \leq n,\\ [1\jot]
 c_1(\tau^2+h_x^2+h_y^2),\quad  & {\rm \quad otherwise. \quad}
\end{array}\right.
\end{equation*}
 \end{theorem}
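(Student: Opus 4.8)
The plan is to run the classical ``consistency plus stability implies convergence'' argument, with Theorem~\ref{thm1} supplying the stability ingredient. First I would record the consistency relation for the exact solution: evaluating \eqref{eqn2a} at $(\mathbf{x}_{ij},t_k)$ and repeating verbatim the Taylor expansions that produced \eqref{2Deqn4}, now with $c$ and $d$ constant so that all the coefficient derivatives $c_x,c_y,c_{xx},c_{yy},d_x,\dots$ drop out, one finds that $U_{ij}^k$ satisfies
\begin{align*}
\delta_t U_{ij}^{k+\frac12}
&= \Big(a+\tfrac{\tau}{2}c^2\Big)\delta_x^2 U_{ij}^k + \Big(b+\tfrac{\tau}{2}d^2\Big)\delta_y^2 U_{ij}^k
   + c\,\Delta_x U_{ij}^k + d\,\Delta_y U_{ij}^k + \tau cd\,\Delta_x\Delta_y U_{ij}^k \\
&\quad + \tau\big(ab\,\delta_x^2\delta_y^2 U_{ij}^k + ad\,\delta_x^2\Delta_y U_{ij}^k + bc\,\delta_y^2\Delta_x U_{ij}^k\big) + r_{ij}^k + R_{ij}^k,
\end{align*}
valid for $(i,j)\in\omega$, $0\le k\le n-1$, where $R_{ij}^k$ is precisely the local truncation error whose order is recorded in \eqref{Local_err}. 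By the assumed $\mathcal{C}^{6,6,4}$ regularity all the derivatives of $u$ entering \eqref{2Deqn2} are bounded on $\bar\Omega\times[0,T]$, so there is a constant $C$ independent of $\tau,h_x,h_y$ with $\|R^k\|_\infty\le C(\tau^2+h_x^4+h_y^4)$ when $r_x=r_y=1/6$ and $\|R^k\|_\infty\le C(\tau^2+h_x^2+h_y^2)$ otherwise; in the first case one absorbs the residual mixed terms via $\tau h_x^2\le\tfrac12(\tau^2+h_x^4)$ and the analogous bound in $y$, and in the second case one uses that $r_x-\tfrac16$, $r_y-\tfrac16$ are bounded because \eqref{CFL_cond1} forces $r_x,r_y\le\tfrac12$.

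Next I would subtract the scheme \eqref{2Deqn7a} from this relation. Since $U^0_{ij}=\varphi(\mathbf{x}_{ij})=u^0_{ij}$ on $\bar\omega$ and $U^k_{ij}=\alpha(\mathbf{x}_{ij},t_k)=u^k_{ij}$ on $\gamma$, the error $e^k_{ij}$ solves the \emph{same} difference system \eqref{2Deqn7} with $r^k_{ij}$ replaced by $R^k_{ij}$ and with homogeneous initial data $e^0\equiv0$ on $\bar\omega$ and homogeneous boundary data $e^k|_\gamma\equiv0$ for $1\le k\le n$. This is exactly the setting of Theorem~\ref{thm1}: the CFL condition \eqref{CFL_cond} is assumed to hold, and the ``boundary function'' of the error problem is identically zero. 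Applying Theorem~\ref{thm1} to $e$ then gives, for $1\le k\le n$,
\[
\|e^k\|_\infty \le \|e^0\|_\infty + \tau\sum_{l=0}^{k-1}\|R^l\|_\infty
= \tau\sum_{l=0}^{k-1}\|R^l\|_\infty
\le n\tau\,\max_{0\le l\le n-1}\|R^l\|_\infty
= T\,\max_{0\le l\le n-1}\|R^l\|_\infty ,
\]
while the case $k=0$ is trivial. Substituting the two bounds on $\|R^l\|_\infty$ yields the claim with $c_1=TC$.

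I do not expect a genuine obstacle: the proof is purely ``stability applied to the error equation''. The only points that need care are (i) checking that the error grid function really inherits the exact algebraic form of \eqref{2Deqn7} — it does, because the scheme is linear with coefficients independent of $(i,j)$ and $k$, so the maximum-principle bookkeeping behind Theorem~\ref{thm1} (nonnegativity of the $S_i$, $\sum_{i=1}^9 S_i=1$) transfers unchanged — and (ii) the mild estimate that at the superconvergent ratio $r_x=r_y=1/6$ the residual terms of size $O(\tau h_x^2+\tau h_y^2)$ collapse into $O(\tau^2+h_x^4+h_y^4)$, which is precisely what lifts the rate from two to four.
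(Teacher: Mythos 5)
Your proposal is correct and follows exactly the route the paper intends: the paper's proof of this theorem is the single remark that it ``can be easily proved in combination with Theorem~\ref{thm1} and the local truncation error \eqref{Local_err}'', i.e.\ precisely the consistency-plus-stability argument you carry out, applying the priori estimate to the error equation with homogeneous initial and boundary data. Your additional details (absorbing the $O(\tau h_x^2+\tau h_y^2)$ residuals via $\tau h_x^2\le\tfrac12(\tau^2+h_x^4)$, and noting that the error inherits the scheme's algebraic form) are exactly the bookkeeping the paper leaves implicit.
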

 This theorem can be easily proved in combination with Theorem \ref{thm1} and the local truncation error \eqref{Local_err}.

\section{The nonlinear problems}\label{Sec4}
\setcounter{equation}{0}
The corrected technique can be extended to two-dimensional nonlinear convection-diffusion equations such as semi-linear and quasi-linear parabolic problems.
In what follows, we consider the numerical solution of the nonlinear convection-diffusion equation \eqref{eqn4.1}.
With the help of \eqref{eqn4.1a} we have
\begin{align}\label{eqn4.2}
 u_{tt} =&\; -f(u)_{xt}-g(u)_{yt}+au_{xxt}+bu_{yyt}+r(u)_t \nonumber\\
        =&\; -\big(f'(u)u_t\big)_x-\big(g'(u)u_t\big)_y+a(u_t)_{xx}+b(u_t)_{yy}+r'(u)u_t \nonumber\\
        =&\; a^2u_{xxxx}+b^2u_{yyyy}+2abu_{xxyy}-2af'(u)u_{xxx}\notag\\
         &\;-2bg'(u)_{yyy}-2bf'(u)u_{xyy}-2ag'(u)u_{xxy} \nonumber\\
         &\; -4af''(u)u_xu_{xx}-4bg''(u)u_yu_{yy}-2ag''(u)u_yu_{xx}\notag\\
         &\; -2bf''(u)u_xu_{yy}-2ag''(u)u_xu_{xy}-2af''(u)u_yu_{xy} \nonumber\\
         &\; +\big(f'(u)^2+2ar'(u)\big)u_{xx}+\big(g'(u)^2+2br'(u)\big)u_{yy}+2f'(u)g'(u)u_{xy} \nonumber\\
         &\; -af'''(u)u_x^3-bg'''(u)u_y^3-ag'''(u)u_yu_x^2-bf'''(u)u_xu_y^2 \nonumber\\
         &\; +\big(2f'(u)f''(u)+ar''(u)\big)u_x^2+\big(2g'(u)g''(u)+br''(u)\big)u_y^2\notag\\
         &\;+2\big(f''(u)g'(u)+f'(u)g''(u)\big)u_xu_y   -\big(f''(u)r(u)+2f'(u)r'(u)\big)u_x\notag\\
         &\;-\big(g''(u)r(u)+2g'(u)r'(u)\big)u_y+r'(u)r(u).
\end{align}
Considering \eqref{eqn4.1a} at the point $(\textbf{x}_{ij},t_k)$ and combining the forward difference quotient for the temporal derivative with the central difference discretization for the spatial derivatives, it follows that
\begin{align*}
  \delta_t U_{ij}^{k+\frac{1}{2}}
 =&\; -f'(U_{ij}^k)\Delta_xU_{ij}^k-g'(U_{ij}^k)\Delta_yU_{ij}^k+a\delta_x^2U_{ij}^k+b\delta_y^2U_{ij}^k+r(U_{ij}^k)\\
  &\; +\Big\{\frac{\tau}{2}u_{tt}+\frac{h_x^2}{6}f'(u)u_{xxx}+\frac{h_y^2}{6}g'(u)u_{yyy}
      -\frac{ah_x^2}{12}u_{xxxx}-\frac{bh_y^2}{12}u_{yyyy}\Big\}(\mathbf{x}_{ij},t_k)\\
  &\; +O(\tau^2+h_x^4+h_y^4)\\
 =&\; -f'(U_{ij}^k)\Delta_xU_{ij}^k-g'(U_{ij}^k)\Delta_yU_{ij}^k+a\delta_x^2U_{ij}^k+b\delta_y^2U_{ij}^k+r(U_{ij}^k)\\
  &\; +\Big\{h_x^2\big(r_x-\frac{1}{6}\big)\big(\frac{a}{2}u_{xxxx}-u_{xxx}\big)+h_y^2\big(r_y-\frac{1}{6}\big)\big(\frac{b}{2}u_{yyyy}
  -u_{yyy}\big)\Big\}(\mathbf{x}_{ij},t_k)\\
  &\; +\tau\Big\{abu_{xxyy}-bf'(u)u_{xyy}-ag'(u)u_{xxy}\Big\}(\mathbf{x}_{ij},t_k)\\
  &\; -\tau\Big\{2af''(u)u_xu_{xx}+2bg''(u)u_yu_{yy}+ag''(u)u_yu_{xx}\\
  &\; \quad +bf''(u)u_xu_{yy}+ag''(u)u_xu_{xy}+af''(u)u_yu_{xy}\Big\}(\mathbf{x}_{ij},t_k)\\
  &\; +\frac{\tau}{2}\Big\{\big(f'(u)^2+2ar'(u)\big)u_{xx}+\big(g'(u)^2+2br'(u)\big)u_{yy}+2f'(u)g'(u)u_{xy}\Big\}(\mathbf{x}_{ij},t_k)\\
  &\; -\frac{\tau}{2}\Big\{af'''(u)u_x^3+bg'''(u)u_y^3+ag'''(u)u_yu_x^2+bf'''(u)u_xu_y^2\Big\}(\mathbf{x}_{ij},t_k)\\
  &\; +\frac{\tau}{2}\Big\{\big(2f'(u)f''(u)+ar''(u)\big)u_x^2+\big(2g'(u)g''(u)\\
  &\;\quad +br''(u)\big)u_y^2+2\big(f''(u)g'(u)+f'(u)g''(u)\big)u_xu_y\Big\}(\mathbf{x}_{ij},t_k)\\
  &\; -\frac{\tau}{2}\Big\{\big(f''(u)r(u)+2f'(u)r'(u)\big)u_x+\big(g''(u)r(u)+2g'(u)r'(u)\big)u_y-r'(u)r(u)\Big\}(\mathbf{x}_{ij},t_k)\\
  &\;+O(\tau^2+h_x^4+h_y^4),
  \end{align*}
   where the second equality has utilized \eqref{eqn4.2}.
  Further applying the centered difference discretization to the remaining partial derivatives including first-order, second-order and mixed derivatives in space, and then rearranging the corresponding result, we have
  \begin{align*}
\delta_t U_{ij}^{k+\frac{1}{2}}  =
  &\; \Big[a+\frac{\tau}{2}\big(f'(U_{ij}^k)^2+2ar'(U_{ij}^k)\big)\Big]\delta_x^2U_{ij}^k
  +\Big[b+\frac{\tau}{2}\big(g'(U_{ij}^k)^2+2br'(U_{ij}^k)\big)\Big]\delta_y^2U_{ij}^k\\
  &\; -\Big[f'(U_{ij}^k)+\frac{\tau}{2}\big(f''(U_{ij}^k)r(U_{ij}^k)+2f'(U_{ij}^k)r'(U_{ij}^k)\big)\Big]\Delta_xU_{ij}^k\\
  &\; -\Big[g'(U_{ij}^k)+\frac{\tau}{2}\big(g''(U_{ij}^k)r(U_{ij}^k)+2g'(U_{ij}^k)r'(U_{ij}^k)\big)\Big]\Delta_yU_{ij}^k+r(U_{ij}^k)\\
&\; +\Big\{h_x^2\big(r_x-\frac{1}{6}\big)\big(\frac{a}{2}u_{xxxx}-u_{xxx}\big)
+h_y^2\big(r_y-\frac{1}{6}\big)\big(\frac{b}{2}u_{yyyy}-u_{yyy}\big)\Big\}(\mathbf{x}_{ij},t_k)\\
  &\; +\tau\Big[ ab\delta_x^2\delta_y^2U_{ij}^k-b  f'(U_{ij}^k)\Delta_x\delta_y^2U_{ij}^k-a  g'(U_{ij}^k)\Delta_y\delta_x^2U_{ij}^k\\
  &\; -2a  f''(U_{ij}^k)\Delta_xU_{ij}^k\delta_x^2U_{ij}^k-2b  g''(U_{ij}^k)\Delta_yU_{ij}^k\delta_y^2U_{ij}^k-a  g''(U_{ij}^k)\Delta_yU_{ij}^k\delta_x^2U_{ij}^k\\
  &\; -b  f''(U_{ij}^k)\Delta_xU_{ij}^k\delta_y^2U_{ij}^k-a  g''(U_{ij}^k)\Delta_xU_{ij}^k\Delta_x\Delta_yU_{ij}^k-b f''(U_{ij}^k)\Delta_yU_{ij}^k\Delta_x\Delta_yU_{ij}^k\\
  &\; +  f'(U_{ij}^k)g'(U_{ij}^k)\Delta_x\Delta_yU_{ij}^k-\frac{a}{2}f'''(U_{ij}^k)(\Delta_xU_{ij}^k)^3
  -\frac{b}{2}g'''(U_{ij}^k)(\Delta_yU_{ij}^k)^3\\
  &\; -\frac{a}{2}g'''(U_{ij}^k)\Delta_yU_{ij}^k(\Delta_xU_{ij}^k)^2-\frac{b}{2}f'''(U_{ij}^k)\Delta_xU_{ij}^k(\Delta_yU_{ij}^k)^2\\
  &\; +\frac{1}{2}\big(2f'(U_{ij}^k)f''(U_{ij}^k)+ar''(U_{ij}^k)\big)(\Delta_xU_{ij}^k)^2  +\frac{1}{2}\big(2g'(U_{ij}^k)g''(U_{ij}^k)+br''(U_{ij}^k)\big)(\Delta_yU_{ij}^k)^2\\
  &\; +\big(f''(U_{ij}^k)g'(U_{ij}^k)+f'(U_{ij}^k)g''(U_{ij}^k)\big)\Delta_xU_{ij}^k\Delta_yU_{ij}^k\\
  &\; +\frac{1}{2}r'(U_{ij}^k)r(U_{ij}^k)\Big]+O(\tau^2+\tau h_x^2+\tau h_y^2+\tau h_x^2h_y^2+h_x^4+h_y^4).
\end{align*}
 Therefore, a corrected difference scheme for \eqref{eqn4.1} reads
\begin{subequations}
\label{eqn4.3}
\begin{numcases}{}
  \;\delta_t u_{ij}^{k+\frac{1}{2}}
 = \Big[a+\frac{\tau}{2}\big(f'(u_{ij}^k)^2+2ar'(u_{ij}^k)\big)-2a\tau f''(u_{ij}^k)\Delta_xu_{ij}^k-a\tau g''(u_{ij}^k)\Delta_yu_{ij}^k\Big]\delta_x^2u_{ij}^k \nonumber\\
  \;\qquad \qquad +\Big[b+\frac{\tau}{2}\big(g'(u_{ij}^k)^2+2br'(u_{ij}^k)\big)-2b\tau g''(u_{ij}^k)\Delta_yu_{ij}^k-b\tau f''(u_{ij}^k)\Delta_xu_{ij}^k\Big]\delta_y^2u_{ij}^k \nonumber\\
  \;\qquad \qquad -\Big[f'(u_{ij}^k)+\frac{\tau}{2}\big(f''(u_{ij}^k)r(u_{ij}^k)+2f'(u_{ij}^k)r'(u_{ij}^k)\big)\Big]\Delta_xu_{ij}^k \nonumber\\
  \; \qquad \qquad-\Big[g'(u_{ij}^k)+\frac{\tau}{2}\big(g''(u_{ij}^k)r(u_{ij}^k)+2g'(u_{ij}^k)r'(u_{ij}^k)\big)\Big]\Delta_yu_{ij}^k  +r(u_{ij}^k)\notag\\
  \; \qquad \qquad+\tau ab\delta_x^2\delta_y^2u_{ij}^k-b\tau f'(u_{ij}^k)\Delta_x\delta_y^2u_{ij}^k-a\tau g'(u_{ij}^k)\Delta_y\delta_x^2u_{ij}^k \nonumber\\
  \;\qquad \qquad +\frac{\tau}{2}\big[2f'(u_{ij}^k)f''(u_{ij}^k)+ar''(u_{ij}^k)-af'''(u_{ij}^k)\Delta_xu_{ij}^k-ag'''(u_{ij}^k)\Delta_yu_{ij}^k\big](\Delta_xu_{ij}^k)^2 \nonumber\\
  \; \qquad \qquad +\frac{\tau}{2}\big[2g'(u_{ij}^k)g''(u_{ij}^k)+br''(u_{ij}^k)-bg'''(u_{ij}^k)\Delta_yu_{ij}^k-bf'''(u_{ij}^k)\Delta_xu_{ij}^k\big](\Delta_yu_{ij}^k)^2 \nonumber\\
  \;\qquad \qquad +\tau\big(f'(u_{ij}^k)g'(u_{ij}^k)-ag''(u_{ij}^k)\Delta_xu_{ij}^k-bf''(u_{ij}^k)\Delta_yu_{ij}^k\big)\Delta_x\Delta_yu_{ij}^k \nonumber\\
  \; \qquad \qquad+\tau\big(f''(u_{ij}^k)g'(u_{ij}^k)+f'(u_{ij}^k)g''(u_{ij}^k)\big)\Delta_xu_{ij}^k\cdot\Delta_yu_{ij}^k +\frac{\tau}{2}r'(u_{ij}^k)r(u_{ij}^k),\notag \\
  \;\qquad \qquad \qquad \qquad \qquad \qquad (i,j)\in \omega,\; 0\leq k \leq n-1,\label{eqn4.3a}\\
  \; u_{ij}^0=\varphi(\mathbf{x}_{ij}), \quad\ \! (i,j)\in \bar\omega,\; \label{eqn4.3b}\\
  \;u_{ij}^k=\alpha(\mathbf{x}_{ij},t_k), \quad (i,j)\in \gamma, \; 1 \leq k \leq n.\label{eqn4.3c}
\end{numcases}
\end{subequations}
\section{Problems under Neumann boundary value conditions}\label{Sec5}
\setcounter{equation}{0}
  In this section, we extend our idea to the general nonlinear convection-diffusion equation under Neumann boundary value conditions as
\begin{figure}[ht]
	\begin{center}
	\includegraphics[width=9cm,height=6cm]{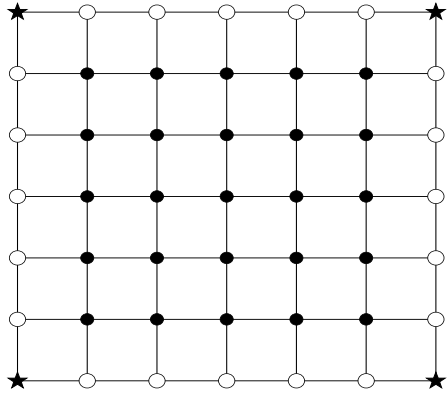}
	\caption{The sketch map in a rectangular domain}
	\label{sketch}
\end{center}
\end{figure}
\begin{subequations}
\label{eqn6.1}
\begin{numcases}{}
u_{t}+f(u)_x+g(u)_y=au_{xx}+bu_{yy}+r(u),\quad \mathbf{x}\in \Omega,\; t\in (0,T],  \label{eqn6.1a} \\
u(\mathbf{x},0)=\varphi(\mathbf{x}),\quad \mathbf{x}\in \bar{\Omega},\label{eqn6.1b} \\
u_x(L_1,y,t)=\alpha_1(y,t),\quad u_x(R_1,y,t)=\alpha_2(y,t), \quad y\in [L_2,R_2],\quad t \in (0,T], \label{eqn6.1c} \\
u_y(x,L_2,t)=\beta_1(x,t),\quad u_y(x,R_2,t)=\beta_2(x,t), \quad x\in (L_1,R_1),\quad t\in (0,T]. \label{eqn6.1d}
\end{numcases}
\end{subequations}
Since \eqref{eqn6.1a} is the same to \eqref{eqn4.1a}, it is necessary to discrete the boundary value conditions \eqref{eqn6.1c}--\eqref{eqn6.1d}
with a fourth-order algorithm. For example, based on the fourth-order backward difference formula \cite{HNW1987}, we have
\begin{subequations}
\label{eqna}
\begin{numcases}{}
 u_x(L_1,y_j,t_k) = \frac{1}{h_x}\Big(-\frac{25}{12}U_{0j}^k + 4U_{1j}^k-3U_{2j}^k+\frac{4}{3}U_{3j}^k-\frac{1}{4}U_{4j}^k\Big)+O(h_x^4),\notag\\
  \qquad \qquad \qquad \qquad \qquad \qquad 0 \leq j \leq m_2,\; 0\leq k\leq n, \label{eqn6.2a}\\
 u_x(R_1,y_j,t_k) = \frac{1}{h_x}\Big(\frac{1}{4}U_{m_1-4,j}^k-\frac{4}{3}U_{m_1-3,j}^k+3U_{m_1-2,j}^k-4U_{m_1-1,j}^k+\frac{25}{12}U_{m_1,j}^k\Big)+O(h_x^4),\nonumber\\
 \qquad \qquad \qquad \qquad \qquad \qquad 0 \leq j \leq m_2,\; 0\leq k\leq n, \label{eqn6.3a}\\
 u_y(x_i,L_2,t_k) = \frac{1}{h_y}\Big(-\frac{25}{12}U_{i0}^k + 4U_{i1}^k-3U_{i2}^k+\frac{4}{3}U_{i3}^k-\frac{1}{4}U_{i4}^k\Big)+O(h_y^4),\notag\\
  \qquad \qquad \qquad \qquad \qquad \qquad  1 \leq i \leq m_1-1,\; 0\leq k\leq n, \label{eqn6.4a}\\
 u_y(x_i,R_2,t_k) = \frac{1}{h_y}\Big(\frac{1}{4}U_{i,m_2-4}^k-\frac{4}{3}U_{i,m_2-3}^k+3U_{i,m_2-2}^k-4U_{i,m_2-1}^k+\frac{25}{12}U_{i,m_2}^k\Big)+O(h_y^4),\nonumber\\
 \qquad \qquad \qquad \qquad \qquad \qquad  1 \leq i \leq m_1-1,\; 0\leq k\leq n. \label{eqn6.5a}
\end{numcases}
\end{subequations}

Omitting the small terms in \eqref{eqn6.2a}--\eqref{eqn6.5a}, we have the discrete schemes for the boundary value conditions \eqref{eqn6.1c}--\eqref{eqn6.1d} as
\begin{subequations}
\label{eqn_boundary}
\begin{numcases}{}
 \frac{1}{h_x}\Big(-\frac{25}{12}u_{0j}^k + 4u_{1j}^k-3u_{2j}^k+\frac{4}{3}u_{3j}^k-\frac{1}{4}u_{4j}^k\Big)=\alpha_1(y_j,t_k),\notag\\
  \qquad \qquad \qquad \qquad 0 \leq j \leq m_2,\; 0\leq k\leq n, \label{eqn6.2aa}\\
 \frac{1}{h_x}\Big(\frac{1}{4}u_{m_1-4,j}^k-\frac{4}{3}u_{m_1-3,j}^k+3u_{m_1-2,j}^k-4u_{m_1-1,j}^k+\frac{25}{12}u_{m_1,j}^k\Big)=\alpha_2(y_j,t_k),\notag\\
\qquad \qquad \qquad \qquad  0 \leq j \leq m_2,\; 0\leq k\leq n, \label{eqn6.3aa}\\
 \frac{1}{h_y}\Big(-\frac{25}{12}u_{i0}^k + 4U_{i1}^k-3u_{i2}^k+\frac{4}{3}u_{i3}^k-\frac{1}{4}u_{i4}^k\Big)=\beta_1(x_i,t_k),\notag\\
  \qquad \qquad \qquad \qquad  1 \leq i \leq m_1-1,\; 0\leq k\leq n, \label{eqn6.4aa}\\
 \frac{1}{h_y}\Big(\frac{1}{4}u_{i,m_2-4}^k-\frac{4}{3}u_{i,m_2-3}^k+3U_{i,m_2-2}^k-4u_{i,m_2-1}^k+\frac{25}{12}u_{i,m_2}^k\Big) = \beta_2(x_i,t_k),\notag\\
 \qquad \qquad \qquad \qquad  1 \leq i \leq m_1-1,\; 0\leq k\leq n. \label{eqn6.5aa}
\end{numcases}
\end{subequations}

In the numerical implement, we compute the numerical solutions at $(k+1)th$-level by the following three steps. The sketch map is referred to
Figure \ref{sketch}:
\begin{enumerate}
  \item [{\rm $\mathbf{(i)}$}] Compute the numerical solutions on the solid points by \eqref{eqn4.3} at $kth$-level;
  \item [{\rm $\mathbf{(ii)}$}] Compute the numerical solutions on the hollow points by \eqref{eqn6.2aa}--\eqref{eqn6.5aa} and the numerical solutions on solid points;
  \item [{\rm $\mathbf{(iii)}$}] Compute the numerical solutions on the star points by \eqref{eqn6.2aa}--\eqref{eqn6.3aa} and the numerical solutions on hollow points.
\end{enumerate}
\section{Three-dimensional diffusion problem} \label{SecDiffu}
\setcounter{equation}{0}
We consider the three-dimensional convection-diffusion problem as
\begin{subequations}
\label{3D_diffu1}
\begin{numcases}{}
u_{t}=\kappa_1  u_{xx} + \kappa_2 u_{yy} + \kappa_3 u_{zz}+\lambda_1 u_x +\lambda_2 u_y+\lambda_3 u_z+ f(\mathbf{x},t),\quad  \mathbf{x} \in \Omega,\; t\in (0,T],  \label{3Deqn2a} \\
u(\mathbf{x},0)=\varphi(\mathbf{x}),\quad \mathbf{x}\in \bar{\Omega},\label{3Deqn2b} \\
u(\mathbf{x},t)=\alpha(\mathbf{x},t), \quad \mathbf{x}\in\Gamma,\; t\in (0,T], \label{3Deqn2c}
\end{numcases}
\end{subequations}
where $\mathbf{x}=(x,y,z)$ and $\kappa_i > 0\; (i=1,2,3)$ denote constant diffusion coefficients.
The spatial domain is set as  $\Omega = (L_1,R_1)\times(L_2,R_2)\times(L_3,R_3)\subset \mathbb{R}^3$.

To ease the notation, we only state the details for $\lambda_i=0\;(i=1,2,3)$.
In addition to notations defined in Section \ref{Sec2}, we take one more integer $m_3$ and let
$h_z = (R_3-L_3)/m_3$, $z_l = L_3+lh_z$, $0\leq l \leq m_3$, $r_z = c\tau/h_z^2$.
Denote $\Omega_{h\tau}=\{(\mathbf{x}_{ijl},t_k)\,|\,0\leq i\leq m_1, \ 0\leq j\leq m_2, \ 0\leq l \leq m_3, \ 0\leq k\leq n\}$ with $\mathbf{x}_{ijl} = (x_i,y_j,z_l)$, $\omega = \{(i,j,l)\,|\,\mathbf{x}_{ijl}\in \Omega\}$, $\gamma = \{(i,j,l)\,|\,\mathbf{x}_{ijl}\in \Gamma\}$ and
$\bar\omega =\omega \cup \gamma.$
For any grid function
$v=\{v_{ijl}^k\,|\, 0\leq i\leq m_1, 0\leq j \leq m_2, 0\leq l \leq m_3, 0\le k\le n\}$ on $\Omega_{h\tau},$
define $\delta_t v_{ijl}^{k+\frac{1}{2}} = (v_{ijl}^{k+1}-v_{ijl}^k)/\tau$,
$\delta_x^2 v_{ijl}^{k} = (v_{i+1,j,l}^k-2v_{ijl}^k+v_{i-1,j,l}^k)/h_x^2$.
Analogously, we could define $\delta_y^2 v_{ijl}^{k}$ and $\delta_z^2 v_{ijl}^{k}$.
Denote $f_{ijl}^k = f(\mathbf{x}_{ijl},t_k)$ and define the grid function
$$U=\{ U_{ijl}^k\,|\,0\leq i\leq m_1,\; 0 \leq j \leq m_2,\;  0 \leq l \leq m_3,\; 0\leq k\leq n\}\ {\rm with\ }U_{ijl}^k=u(\mathbf{x}_{ijl},t_k).$$

Considering \eqref{3Deqn2a} at the point $(\mathbf{x}_{ijl},t_k)$ and similar to that of two dimension, we have
\begin{align}
   &\;\delta_t U_{ijl}^{k+\frac{1}{2}} - \kappa_1\delta_x^2U_{ijl}^k - \kappa_2\delta_y^2 U_{ijl}^k - \kappa_3\delta_z^2 U_{ijl}^k \notag\\
  =&\; f_{ijl}^k+\Big(\frac{\tau}{2}u_{tt} - \frac{\kappa_1 h_x^2}{12}u_{xxxx} - \frac{\kappa_2 h_y^2}{12}u_{yyyy} - \frac{\kappa_3h_z^2}{12}u_{zzzz}\Big)(\mathbf{x}_{ijl},t_k)
       + O(\tau^2+h_x^4+h_y^4+h_z^4)\notag\\
  =&\; p_{ijl}^k+\left\{\frac{\kappa_1 h_x^2}{2}\Big(r_x-\frac{1}{6}\Big)u_{xxxx} + \frac{\kappa_2 h_y^2}{2}\Big(r_y-\frac{1}{6}\Big)u_{yyyy}
       + \frac{\kappa_3 h_z^2}{2}\Big(r_z-\frac{1}{6}\Big)u_{zzzz}\right\}(\mathbf{x}_{ijl},t_k) \notag\\
   &\; + \tau \kappa_1\kappa_2\delta_x^2\delta_y^2 U_{ijl}^k + \tau \kappa_2\kappa_3\delta_y^2\delta_z^2 U_{ijl}^k + \tau \kappa_1\kappa_3\delta_x^2\delta_z^2 U_{ijl}^k
       + O(\tau^2+\tau h_x^2+\tau h_y^2+\tau h_z^2+h_x^4+h_y^4+h_z^4),\notag
\end{align}
where
\begin{align*}
  p_{ijl}^k = f_{ijl}^k +\frac{\tau}{2}\left[a(f_{xx})_{ijl}^k + b(f_{yy})_{ijl}^k + c(f_{zz})_{ijl}^k + (f_t)_{ijl}^k \right]
\end{align*}
with $(f_{xx})_{ijl}^k = f_{xx}(\mathbf{x}_{ijl},t_k),\; (f_{yy})_{ijl}^k = f_{yy}(\mathbf{x}_{ijl},t_k),\; (f_{zz})_{ijl}^k = f_{zz}(\mathbf{x}_{ijl},t_k),\;(f_t)_{ijl}^k = f_t(\mathbf{x}_{ijl},t_k).$

Therefore, a corrected difference scheme for \eqref{3D_diffu1} is constructed as
\begin{subequations}
\label{3D_diffu2}
\begin{numcases}{}
   \delta_t u_{ijl}^{k+\frac{1}{2}} = \kappa_1\delta_x^2u_{ijl}^k + \kappa_2\delta_y^2 u_{ijl}^k + \kappa_3\delta_z^2 u_{ijl}^k
   + p_{ijl}^k  \notag
   \\ \quad  + \tau \kappa_1\kappa_2\delta_x^2\delta_y^2 u_{ijl}^k+ \tau \kappa_2\kappa_3\delta_y^2\delta_z^2 u_{ijl}^k + \tau \kappa_1\kappa_3\delta_x^2\delta_z^2 u_{ijl}^k, \quad (i,j,l)\in\omega,\; 0 \leq k \leq n-1, \label{3Deqn3a}\\
     u_{ijl}^0=\varphi(\mathbf{x}_{ijl}), \quad \qquad \ \! (i,j,l)\in \bar\omega,\; \label{3Deqn3b}\\
   u_{ijl}^k=\alpha(\mathbf{x}_{ijl},t_k),\qquad (i,j,l)\in \gamma, \; 0 \leq k \leq n.\label{3Deqn3c}
\end{numcases}
\end{subequations}

By using the maximum principle and similar to the proof in two dimension, we have the convergence result for the corrected difference scheme \eqref{3D_diffu2}.
\begin{theorem}\label{thm2}
 Let $\{U^k_{ijl}\,|\,0\leq i\leq m_1, \ 0\leq j\leq m_2, \ 0\leq l\leq m_3, \ 0 \leq k \leq n\}$ be the solution of \eqref{3D_diffu1} with $\lambda_i=0\;(i=1,2,3)$
 and $\{u^k_{ijl}\,|\,0\leq i\leq m_1,\ 0\leq j \leq m_2, \ 0\leq l\leq m_3, \ 0 \leq k \leq n\}$ be the solution of the corrected difference scheme \eqref{3D_diffu2}. Denote
$e^k_{ijl}=U^k_{ijl}-u^k_{ijl},\; (0\leq i\leq m_1,\ 0 \leq j \leq m_2, \ 0\leq l\leq m_3, \ 0 \leq k \leq n).$
When
\begin{align}
\max\{r_x,r_y,r_z\} \leq {1/4},\label{3d_CFL}
\end{align}
then there is a constant $c_2$ such that
\begin{equation*}
\|e^k\|_\infty\leq \left\{
\begin{array}{ll}
 c_2(\tau^2+h_x^4+h_y^4+h_z^4),\quad        & {\rm \quad when \quad} r_x=r_y=r_z=1/6,\quad  0 \leq k \leq n,\\ [1\jot]
 c_2(\tau^2+h_x^2+h_y^2+h_z^2),\quad  & {\rm \quad otherwise}.
\end{array}\right.
\end{equation*}
 \end{theorem}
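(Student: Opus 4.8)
The plan is to follow exactly the route used for Theorems \ref{thm1} and \ref{thm2} in the two-dimensional case: first establish a discrete maximum principle (a three-dimensional analogue of the priori estimate in Theorem \ref{thm1}) for the scheme \eqref{3D_diffu2} under the condition \eqref{3d_CFL}, and then combine this stability bound with the local truncation error recorded just before \eqref{3D_diffu2} to close the error estimate.

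First I would recast the explicit update \eqref{3Deqn3a} in the pointwise form
\[
u_{ijl}^{k+1}=S_0\,u_{ijl}^k+\sum_{\text{face nodes}}S_\nu\,u_\nu^k+\sum_{\text{edge nodes}}S_\nu\,u_\nu^k+\tau p_{ijl}^k,
\]
where the stencil consists of the centre node, the six face neighbours $(i\pm1,j,l),\,(i,j\pm1,l),\,(i,j,l\pm1)$ coming from $\delta_x^2,\delta_y^2,\delta_z^2$ and the cross terms, and the twelve edge neighbours $(i\pm1,j\pm1,l),\,(i,j\pm1,l\pm1),\,(i\pm1,j,l\pm1)$ produced by the three mixed operators $\delta_x^2\delta_y^2$, $\delta_y^2\delta_z^2$, $\delta_x^2\delta_z^2$ — there is no corner node, since no $\delta_x^2\delta_y^2\delta_z^2$ term is present. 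A direct computation, using $\tau\kappa_i\kappa_j/(h_i^2h_j^2)=r_ir_j/\tau$, gives
\[
S_0=(1-2r_x)(1-2r_y)(1-2r_z)+8r_xr_yr_z,\qquad S_{(i\pm1,j,l)}=r_x(1-2r_y-2r_z)
\]
(with the obvious permutations for the other faces) and $S_\nu=r_xr_y$ (resp.\ $r_yr_z$, $r_xr_z$) for each edge node. A short bookkeeping check then shows that these nineteen weights sum to exactly $1$.

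The key step is to verify that all nineteen weights are nonnegative under \eqref{3d_CFL}. Since $\max\{r_x,r_y,r_z\}\le1/4$ forces each factor $1-2r_i\ge1/2>0$ (so $S_0>0$) and each pairwise sum $r_y+r_z\le1/2$, $r_x+r_z\le1/2$, $r_x+r_y\le1/2$ (so the face weights are $\ge0$), while the edge weights are trivially nonnegative, the update becomes a convex combination of the previous-level values plus $\tau p_{ijl}^k$. Applying this at every interior node and invoking the boundary data then yields $\|u^{k+1}\|_\infty\le\|u^k\|_\infty+\tau\|p^k\|_\infty$ for the homogeneous boundary problem, which is the three-dimensional counterpart of Theorem \ref{thm1}.

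Finally, I would apply this estimate to the error $e^k_{ijl}=U^k_{ijl}-u^k_{ijl}$, which satisfies \eqref{3D_diffu2} with the forcing term replaced by the local truncation error $R^k_{ijl}$, and with $e^0\equiv0$, $e^k|_\gamma\equiv0$; recursion gives $\|e^k\|_\infty\le\tau\sum_{l=0}^{k-1}\|R^l\|_\infty\le T\max_{0\le l\le k-1}\|R^l\|_\infty$. Reading $R^k_{ijl}$ off the expansion preceding \eqref{3D_diffu2}: the correction terms $\tfrac{\kappa_i h_i^2}{2}(r_i-\tfrac16)u_{(\cdot)}$ vanish when $r_x=r_y=r_z=\tfrac16$, and then $\tau h_x^2,\tau h_y^2,\tau h_z^2$ are all $O(\tau^2)$, so $\|R^k\|_\infty=O(\tau^2+h_x^4+h_y^4+h_z^4)$; otherwise $\|R^k\|_\infty=O(\tau^2+h_x^2+h_y^2+h_z^2)$. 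This gives the claimed bound with $c_2$ depending on $T$ and the $\mathcal{C}^{6,6,6,4}$-norm of $u$. The main obstacle I anticipate is the sign analysis of the face weights $r_x(1-2r_y-2r_z)$: it is precisely this that forces the stricter restriction $\max\{r_x,r_y,r_z\}\le1/4$ in three dimensions (versus $\max\{r_x,r_y\}\le1/2$ in two dimensions), and one must check carefully that the three mixed-difference stencils never reinstate a negative weight on a face or edge node.
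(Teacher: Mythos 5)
Your proposal is correct and follows exactly the route the paper intends: the paper omits the details and merely says the result follows ``by using the maximum principle and similar to the proof in two dimension,'' and your nineteen-point stencil computation (centre weight $(1-2r_x)(1-2r_y)(1-2r_z)+8r_xr_yr_z$, face weights $r_x(1-2r_y-2r_z)$ and permutations, edge weights $r_ir_j$, summing to $1$ and nonnegative under \eqref{3d_CFL}) is precisely the three-dimensional analogue of the $S_1,\dots,S_9$ decomposition in the proof of Theorem \ref{thm1}. Combining the resulting priori estimate with the truncation error recorded before \eqref{3D_diffu2} closes the argument as claimed, so your write-up in fact supplies the details the paper leaves out.
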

        \vspace{-10mm}
          \begin{figure}[ht]
	\begin{center}
	\includegraphics[width=9cm,height=6cm]{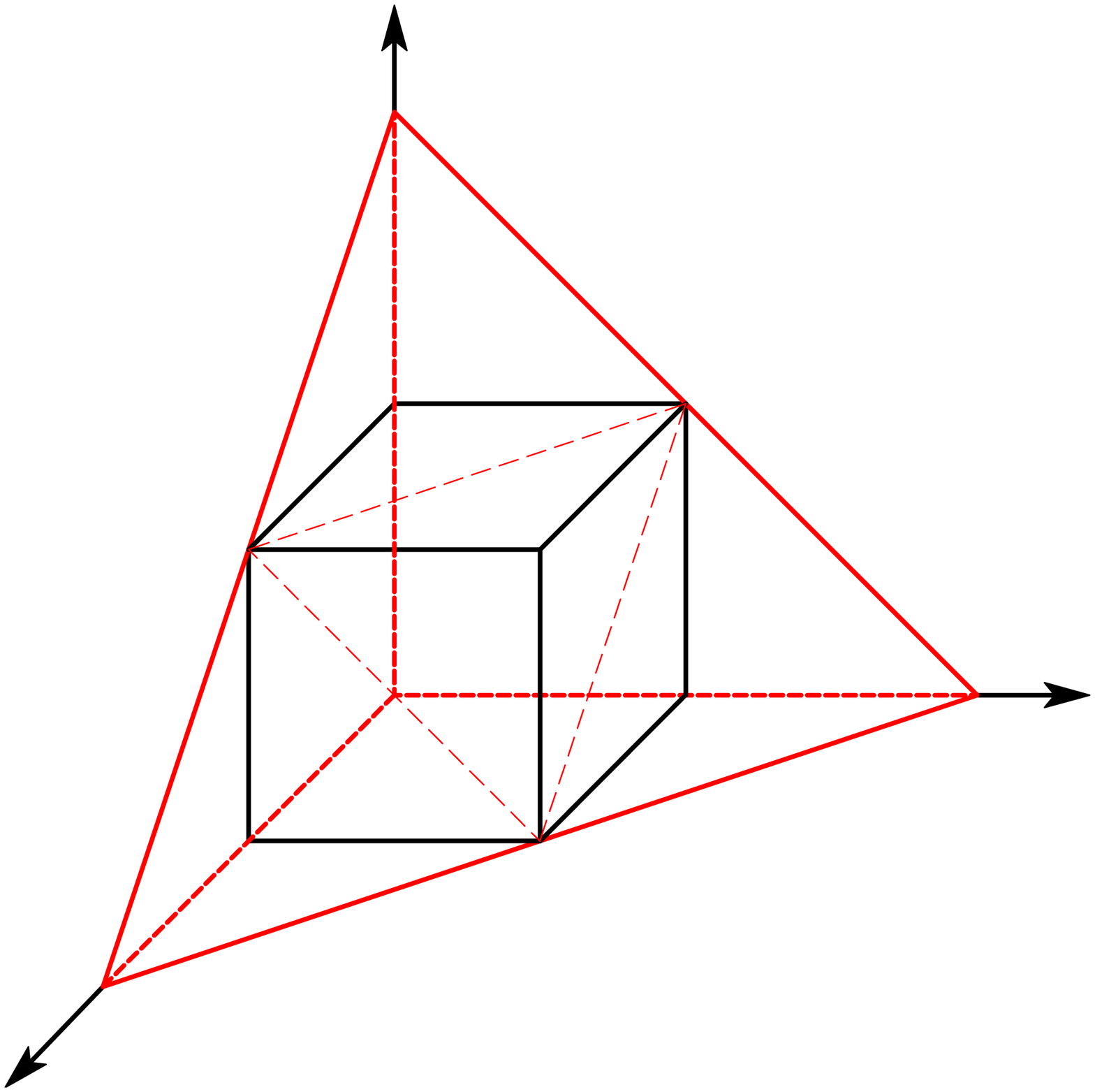}
	\caption{The sketch of the stable domain for the three-dimensional diffusion problem. Red triangular pyramid denotes the stability of the classical difference scheme \eqref{scheme_3D_class} and black cube denotes the stability of the corrected difference scheme \eqref{3D_diffu2}}
	\label{sketch2}
\end{center}
\end{figure}
 \begin{remark}\label{rem6.3}
 The following results are easily obtained from the convergence proof.
  \begin{itemize}
    \item \eqref{3d_CFL} is also {\rm \textbf{CFL}} condition for the stability of the corrected difference scheme \eqref{3D_diffu2}. {\rm \textbf{CFL}} condition of the classical difference scheme
\begin{subequations}
\label{scheme_3D_class}
\begin{numcases}{}
   \delta_t u_{ijl}^{k+\frac{1}{2}} = \kappa_1\delta_x^2u_{ijl}^k + \kappa_2\delta_y^2 u_{ijl}^k + \kappa_3\delta_z^2 u_{ijl}^k
   + p_{ijl}^k, \\
     u_{ijl}^0=\varphi(\mathbf{x}_{ijl}), \quad \qquad \ \! (i,j,l)\in \bar\omega,\;  \\
   u_{ijl}^k=\alpha(\mathbf{x}_{ijl},t_k),\qquad (i,j,l)\in \gamma, \; 0 \leq k \leq n,
\end{numcases}
\end{subequations}
        requires $r_x+r_y+r_z \leq 1/2$, see e.g., \cite{Guo1988}. The stability regions between the corrected difference scheme (cube) and the classical difference scheme (rectangular triangular pyramid) are also referred to Figure \ref{sketch2}.
    \item For the convection-diffusion problem with constant convection coefficients \eqref{3D_diffu1} in three dimension, {\rm \textbf{CFL}} condition becomes
        \begin{subequations}
\begin{numcases}{}
\max\{r_x, r_y,r_z\} \leq 1/4,\notag\\
 |\lambda_1|h_x \leq \kappa_1 \cdot \min\Big\{2, \sqrt{(1-2r_x)(1-2r_y)(1-2r_z)/3}\Big/r_x\Big\},\notag\\
  |\lambda_2|h_y \leq \kappa_2 \cdot \min\Big\{2, \sqrt{(1-2r_x)(1-2r_y)(1-2r_z)/3}\Big/r_y\Big\},\notag\\
   |\lambda_3|h_z \leq \kappa_3 \cdot \min\Big\{2, \sqrt{(1-2r_x)(1-2r_y)(1-2r_z)/3}\Big/r_z\Big\}.\notag
\end{numcases}
\end{subequations}
We omit details here for sake of brevity.
  \end{itemize}
\end{remark}

\section{Numerical examples}\label{Sec6}
\setcounter{equation}{0}
In this section, we verify the accuracy and test {\rm \textbf{CFL}} condition of the numerical simulation for several problems including linear and nonlinear cases in different scenarios. We first focus on the two-dimensional case. For this purpose,
we denote the difference solution $\{u_{ij}^k\}$ with the chosen fixed spacial step sizes $(h_x,h_y)$ and temporal step size $\tau$ by $\{u_{ij}^k(h_x,h_y,\tau)\}.$ Similarly, $\{u_{2i,2j}^{4k}(h_x/2,h_y/2,\tau/4)\}$ denotes the difference solution with the chosen fixed spacial step sizes $(h_x/2,h_y/2)$ and temporal step size $\tau/4$.
Let $r_x=r_y:=r$. The numerical errors in $L^{\infty}$-norm and global convergence orders are defined as
{\footnotesize{\begin{equation}\label{eqn5.1}
{\rm Ord}_G=\left\{
  \begin{array}{ll}
  \log_2\left(\frac{E_\infty(h_x,h_y,\tau)}{E_\infty(h_x/2,h_y/2,\tau/4)}\right),  & {\rm where~~~}   E_\infty(h_x,h_y,\tau)\!=\!\max_{\substack{(\mathbf{x}_{ij},t_k)\in \Omega_{h\tau}}}\limits\big| u(\mathbf{x}_{ij},t_k) -u_{ij}^k(h_x,h_y,\tau)\big|, \\
  &{\rm ~if~the~exact~solution~is~known;}\\ [1\jot]
  \log_2\left(\frac{E_\infty(h_x,h_y,\tau)}{E_\infty(h_x/2,h_y/2,\tau/4)}\right),  & {\rm where~~~}   E_\infty(h_x,h_y,\tau)\!=\!\max_{\substack{(\mathbf{x}_{ij},t_k)\in \Omega_{h\tau}}}\limits\big| u_{ij}^{k}(h_x,h_y,\tau) \\
  &-u_{2i,2j}^{4k}(h_x/2,h_y/2,\tau/4)\big|, {\rm ~if~the~exact~solution~is~unknown.}
  \end{array}\right.
\end{equation}
}
}

In the numerical implementation, we first fixed the step-ratio $r$ and temporal step size $\tau$, then determine $h_x$ and $h_y$ by $r$ and $\tau$.

Similarly, the numerical errors and  global convergence orders for the three-dimensional case could be defined.
The step-ratio is abbreviated as ``R'' and the grid parameter as ``P''.
\vspace{-5mm}
\subsection{Linear problems}
\vspace{-4mm}
\begin{example}\label{exam1}
   Firstly, the model problem \eqref{eqn1} with $c=d=0$ is solved by the correct difference scheme \eqref{2Deqn5} and classical Euler difference method \eqref{2Deqn6} respectively, with the parameters $L_1=L_2=0$, $R_1=R_2=1$, $T=1$. The initial value condition, boundary value condition and the source term $f(\mathbf{x},t)$ are determined by the exact solution $u(\mathbf{x},t)=\exp(1/2(x+y)-t)$, see e.g.,  {\rm \cite{Sun2022}}.

     Two sets of diffusion coefficients {\rm \textbf{Case~I:}} $a=4,\ b =1${\rm \, and\ }{\rm \textbf{Case~II:}} $a=1,\ b = 0.0001$
   are used. Numerical results are listed in Tables \ref{tab1} and \ref{table1}.

In {\rm \textbf{Case I}}, we see clearly that when the step ratio $r = 1/6$,
the corrected difference scheme \eqref{2Deqn5} obtains the fourth-order accuracy approximatively. Otherwise, it is only second-order accurate globally. When the step-ratio increases to $r=1/2$ gradually, the corrected difference scheme \eqref{2Deqn5} still works. However, the classical difference scheme \eqref{2Deqn6} fails even if the step-ratio is $r=1/3.99$ because of the restriction of {\rm \textbf{CFL}} condition. These numerical observations are surprisingly consistent with the theoretical results.
Moreover, the numerical results in Table \ref{tab1} are more accurate than those in Table \ref{table1} even using the same step-ratio, which display the advantage of the corrected Euler difference scheme \eqref{2Deqn5}.
In {\rm \textbf{Case II}}, similar results to {\rm \textbf{Case I}} are observed from the third column in Table \ref{tab1} and Table \ref{table1} even though the diffusion varies greatly from direction to direction.

\begin{table}[H]
\begin{center}
\caption{The errors in $L^{\infty}$-norm versus grid sizes reduction and convergence orders of the corrected difference scheme \eqref{2Deqn5} for the linear diffusion equation in Example \ref{exam1}}
\vspace{-4mm}
\setlength\tabcolsep{1.9mm}{
\begin{tabular}{|c|ccccc|ccccc|}
\hline
\multicolumn{0}{|c|}{ }&\multicolumn{5}{c|}{\textbf{Case~I}}&\multicolumn{5}{c|}{\textbf{Case~II}}\\
         \cline{2-6}\cline{7-11}\diagbox{R}{P}
         & $m_1$  & $m_2$ & $n$    & ${E}_{\infty}(h_x,h_y,\tau)$& ${\rm Ord_G}$  & $m_1$  & $m_2$ & $n$  & ${E}_{\infty}(h_x,h_y,\tau)$& ${\rm Ord_G}$  \\
         \cline{1-6}\cline{7-11}
         & $5$  & $10$    & $700$    & $2.2598e-6$    & $*$      &$5$ & $500$    & $175$   & $3.7141e-6$  & $*$           \\
         & $10$ & $20$    & $2800$   & $5.7458e-7$    & $1.9756$ &$10$& $1000$   & $700$   & $8.4007e-7$  & $2.1444$ \\
  $r=1/7$& $20$ & $40$    & $11200$  & $1.4348e-7$    & $2.0016$ &$20$& $2000$   & $2800$  & $2.0296e-7$  & $2.0493$ \\
         & $40$ & $80$    & $44800$  & $3.5942e-8$    & $1.9971$ &$40$& $4000$   & $11200$ & $5.0363e-8$  & $2.0107$ \\
         & $80$ & $160$   & $179200$ & $8.9853e-9$    & $2.0000$ &$80$& $8000$   & $44800$ & $1.2563e-8$  & $2.0032$ \\
 \hline
         & $5$  & $10$    & $600$    & $2.0937e-8$    & $*$               &$5$ & $500$    & $150$   & $7.8615e-7$   & $*$           \\
         & $10$ & $20$    & $2400$   & $1.3370e-9$    & $\textbf{3.9690}$ &$10$& $1000$   & $600$   & $5.0407e-8$   & $\textbf{3.9631}$ \\
$r=\textbf{1/6}$& $20$ & $40$& $9600$& $8.3554e-11$    & $\textbf{4.0001}$ &$20$& $2000$   & $2400$  & $3.1499e-9$   & $\textbf{4.0002}$ \\
         & $40$ & $80$    & $38400$  & $5.1935e-12$    & $\textbf{4.0079}$ &$40$& $4000$   & $9600$  & $1.9706e-10$   & $\textbf{3.9986}$ \\
         & $80$ & $160$   & $153600$ & $1.6276e-13$    & $\textbf{4.9959}$ &$80$& $8000$   & $38400$ & $1.2082e-11$   & $\textbf{4.0277}$ \\
 \hline
         & $5$  & $10$   & $500$     & $3.1255e-6$    & $*$      &$5$ & $500$    & $125$   & $3.2597e-6$   &  $*$           \\
         & $10$ & $20$   & $2000$    & $8.0197e-7$    & $1.9624$ &$10$& $1000$   & $500$   & $1.0517e-6$   &  $1.6320$ \\
  $r=1/5$& $20$ & $40$   & $8000$    & $2.0072e-7$    & $1.9983$ &$20$& $2000$   & $2000$  & $2.7637e-7$   &  $1.9281$ \\
         & $40$ & $80$   & $32000$   & $5.0310e-8$    & $1.9963$ &$40$& $4000$   & $8000$  & $7.0022e-8$   &  $1.9807$ \\
         & $80$ & $160$  & $128000$  & $1.2579e-8$    & $1.9998$ &$80$& $8000$   & $32000$ & $1.7558e-8$   &  $1.9957$ \\
  \hline
         & $5$  & $10$    & $200$    & $3.2085e-5$    & $*$      &$5$ & $500$    & $50$    & $3.6891e-5$  & $*$           \\
         & $10$ & $20$    & $800$    & $8.0721e-6$    & $1.9909$ &$10$& $1000$   & $200$   & $1.0791e-5$  & $1.7734$ \\
  $r=1/2$& $20$ & $40$    & $3200$   & $2.0106e-6$    & $2.0054$ &$20$& $2000$   & $800$   & $2.7808e-6$  & $1.9563$ \\
         & $40$ & $80$    & $12800$  & $5.0330e-7$    & $1.9981$ &$40$& $4000$   & $3200$  & $7.0129e-7$  & $1.9874$ \\
         & $80$ & $160$   & $51200$  & $1.2580e-7$    & $2.0003$ &$80$& $8000$   & $12800$ & $1.7565e-7$  & $1.9973$ \\
  \hline
         & $5$  & $10$    & $199$    & $3.2334e-5$    & $*$       &$5$ & $500$    & $50$    & $9.5017e-5$  & $*$           \\
         & $10$ & $20$    & $796$    & $8.1327e-6$    & $1.9912$  &$10$& $1000$   & $199$   & $1.0872e-5$  & $3.1276$ \\
  $r=1/1.99$&$20$&$40$    & $3184$   & $4.4720e-4$    & $-5.7810$ &$20$& $2000$   & $796$   & $2.8052e-6$  & $1.9544$ \\
         & $40$ & $80$    & $12736$  & $5.5224e+76$   & $-266.06$ &$40$& $4000$   & $3184$  & $5.0471e+9$  &$-50.676$ \\
         & $80$ & $160$   & $50944$  & {\rm Inf}      & {\rm -Inf}&$80$& $8000$   & $12736$ & $1.5555e+90$ &$-267.38$ \\
  \hline
\end{tabular}\label{tab1}
}
\end{center}
\end{table}
\vspace{-12mm}
\begin{table}[H]
\caption{The errors in $L^{\infty}$-norm versus grid sizes reduction and convergence orders of the classical Euler
difference scheme \eqref{2Deqn6} for the linear diffusion equation in Example \ref{exam1}}
\centering
\vspace{-4mm}
\setlength\tabcolsep{1.9mm}{
\begin{tabular}{|c|ccccc|ccccc|}
\hline
\multicolumn{0}{|c|}{ }&\multicolumn{5}{c|}{\textbf{Case~I}}&\multicolumn{5}{c|}{\textbf{Case~II}}\\
         \cline{2-6}\cline{7-11}\diagbox{R}{P}
         & $m_1$  & $m_2$ & $n$    & ${E}_{\infty}(h_x,h_y,\tau)$& ${\rm Ord_G}$  & $m_1$  & $m_2$ & $n$  & ${E}_{\infty}(h_x,h_y,\tau)$& ${\rm Ord_G}$  \\
         \cline{1-6}\cline{7-11}
         & $5$  & $10$    & $700$    & $3.0144e-6$    & $*$      &$5$ & $500$    & $175$   & $2.7831e-4$  & $*$      \\
         & $10$ & $20$    & $2800$   & $7.7458e-7$    & $1.9604$ &$10$& $1000$   & $700$   & $7.1417e-5$  & $1.9623$ \\
  $r=1/7$& $20$ & $40$    & $11200$  & $1.9395e-7$    & $1.9977$ &$20$& $2000$   & $2800$  & $1.7854e-5$  & $2.0000$ \\
         & $40$ & $80$    & $44800$  & $4.8617e-8$    & $1.9961$ &$40$& $4000$   & $11200$ & $4.4692e-6$  & $1.9981$ \\
         & $80$ & $160$   & $179200$ & $1.2156e-8$    & $1.9998$ &$80$& $8000$   & $44800$ & $1.1173e-6$  & $2.0000$ \\
 \hline
         & $5$  & $10$    & $600$    & $9.2716e-7$    & $*$      &$5$ & $500$    & $150$   & $3.2822e-4$  & $*$      \\
         & $10$ & $20$    & $2400$   & $2.3637e-7$    & $1.9718$ &$10$& $1000$   & $600$   & $8.4248e-5$  & $1.9620$ \\
  $r=1/6$& $20$ & $40$    & $9600$   & $5.9067e-8$    & $2.0006$ &$20$& $2000$   & $2400$  & $2.1063e-5$  & $1.9999$ \\
         & $40$ & $80$    & $38400$  & $1.4799e-8$    & $1.9969$ &$40$& $4000$   & $9600$  & $5.2727e-6$  & $1.9981$ \\
         & $80$ & $160$   & $153600$ & $3.6999e-9$    & $1.9999$ &$80$& $8000$   & $38400$ & $1.3182e-6$  & $2.0000$ \\
 \hline
         & $5$  & $10$   & $500$     & $1.9943e-6$    & $*$      &$5$ & $500$    & $125$   & $3.9804e-4$  & $*$      \\
         & $10$ & $20$   & $2000$    & $5.1709e-7$    & $1.9474$ &$10$& $1000$   & $500$   & $1.0221e-4$  & $1.9614$ \\
  $r=1/5$& $20$ & $40$   & $8000$    & $1.2977e-7$    & $1.9945$ &$20$& $2000$   & $2000$  & $2.5556e-5$  & $1.9998$ \\
         & $40$ & $80$   & $32000$   & $3.2546e-8$    & $1.9953$ &$40$& $4000$   & $8000$  & $6.3975e-6$  & $1.9981$ \\
         & $80$ & $160$  & $128000$  & $8.1388e-9$    & $1.9996$ &$80$& $8000$   & $32000$ & $1.5994e-6$  & $2.0000$ \\
  \hline
         & $5$  & $10$    & $400$    & $6.3753e-6$    & $*$      &$5$ & $500$    & $100$   & $5.0264e-4$  & $*$           \\
         & $10$ & $20$    & $1600$   & $1.6472e-6$    & $1.9525$ &$10$& $1000$   & $400$   & $1.2914e-4$  & $1.9606$ \\
  $r=1/4$& $20$ & $40$    & $6400$   & $4.1301e-7$    & $1.9958$ &$20$& $2000$   & $1600$  & $3.2294e-5$  & $1.9996$ \\
         & $40$ & $80$    & $25600$  & $1.0356e-7$    & $1.9957$ &$40$& $4000$   & $6400$  & $8.0846e-6$  & $1.9980$ \\
         & $80$ & $160$   & $102400$ & $2.5897e-8$    & $1.9997$ &$80$& $8000$   & $25600$ & $2.0212e-6$  & $2.0000$ \\
  \hline
         & $5$  & $10$    & $399$    & $6.4302e-6$    & $*$       &$5$ & $500$    & $100$   & $4.3540e-4$  & $*$           \\
         & $10$ & $20$    & $1596$   & $1.6614e-6$    & $1.9525$  &$10$& $1000$   & $399$   & $1.2948e-4$  & $1.7496$ \\
  $r=1/3.99$&$20$&$40$    & $6384$   & $4.1656e-7$    & $1.9958$  &$20$& $2000$   & $1596$  & $3.2379e-5$  & $1.9996$ \\
         & $40$ & $80$    & $25536$  & $2.1130e+18$   & $-82.069$ &$40$& $4000$   & $6384$  & $1.1756e-5$  & $1.4617$ \\
         & $80$ & $160$   & $102144$ & $1.0174e+18$   & $-550.39$ &$80$& $8000$   & $25536$ & $3.4068e+34$ &$-131.09$ \\
  \hline
\end{tabular}\label{table1}
}
\end{table}
\end{example}

\begin{example}\label{exam2}
   Then we consider the problem \eqref{eqn1} by the corrected difference scheme \eqref{2Deqn6} and the classical difference scheme \eqref{2Deqn7}. The parameters are taken as $L_1=L_2=0$, $R_1=R_2=1$ and $T=1$. The initial value condition, boundary value conditions and the source term are determined by the exact solution $u(\mathbf{x},t) = \exp(x+y-t)$. We take diffusion and convection coefficients $a,\ b,\ c,\ d$ as
   \begin{itemize}
    \item [] {\rm \textbf{Case~I:~}} $a=4,\ b =1,\ c=-10,\ d=20${\rm ;}
    \quad {\rm \textbf{Case~II:~}} $a=1,\ b = 0.01,\ c=-1, \ d=2$.
   \end{itemize}

   Numerical results are listed in Tables \ref{tab2} and \ref{table2}. When the step-ratio is $1/6$, the convergence rate is approximately fourth-order for the above two sets of parameters, which confirm the theoretical findings. For other step-ratios, the convergence rate is only two globally in both cases. Moreover, the corrected difference scheme \eqref{2Deqn6} is more accurate than the classical difference scheme \eqref{2Deqn7} whatever the step sizes are, which displays the superiority of the corrected difference scheme \eqref{2Deqn6}.
\vspace{-5mm}
\begin{table}[H]
\caption{The errors in $L^{\infty}$-norm versus grid sizes reduction and convergence orders of the corrected
difference scheme \eqref{2Deqn7} for the linear convection-diffusion equation in Example \ref{exam2}}
\centering
\vspace{-4mm}
\setlength\tabcolsep{2.1mm}{
\begin{tabular}{|c|ccccc|ccccc|}
\hline
\multicolumn{0}{|c|}{ }&\multicolumn{5}{c|}{\textbf{Case~I}}&\multicolumn{5}{c|}{\textbf{Case~II}}\\
         \cline{2-6}\cline{7-11}\diagbox{R}{P}
         & $m_1$  & $m_2$ & $n$    & ${E}_{\infty}(h_x,h_y,\tau)$& ${\rm Ord_G}$  & $m_1$  & $m_2$ & $n$  & ${E}_{\infty}(h_x,h_y,\tau)$& ${\rm Ord_G}$  \\
         \cline{1-6}\cline{7-11}
         & $5$  & $10$    & $700$    & $9.8608e-5$    & $*$      &$5$ & $50$    & $175$   & $7.2462e-5$  & $*$           \\
         & $10$ & $20$    & $2800$   & $1.8204e-5$    & $2.4374$ &$10$& $100$   & $700$   & $1.7282e-5$  & $2.0680$ \\
  $r=1/7$& $20$ & $40$    & $11200$  & $4.1716e-6$    & $2.1256$ &$20$& $200$   & $2800$  & $4.2680e-6$  & $2.0176$ \\
         & $40$ & $80$    & $44800$  & $1.0174e-6$    & $2.0358$ &$40$& $400$   & $11200$ & $1.0653e-6$  & $2.0023$ \\
         & $80$ & $160$   & $179200$ & $2.5287e-7$    & $2.0084$ &$80$& $800$   & $44800$ & $2.6613e-7$  & $2.0011$ \\
 \hline
         & $5$  & $10$    & $600$    & $4.0615e-5$    & $*$      &$5$     & $50$         & $150$   & $5.1442e-6$   & $*$           \\
         & $10$ & $20$    & $2400$   & $2.5579e-6$    & $\textbf{3.9890}$ &$10$& $100$   & $600$   & $3.2166e-7$   & $\textbf{3.9994}$ \\
$r=\textbf{1/6}$& $20$ & $40$& $9600$& $1.6117e-7$    & $\textbf{3.9883}$ &$20$& $200$   & $2400$  & $2.0103e-8$   & $\textbf{4.0000}$ \\
         & $40$ & $80$    & $38400$  & $1.0079e-8$    & $\textbf{3.9993}$ &$40$& $400$   & $9600$  & $1.2583e-9$   & $\textbf{3.9978}$ \\
         & $80$ & $160$   & $153600$ & $6.3068e-10$   & $\textbf{3.9982}$ &$80$& $800$   & $38400$ & $7.8984e-11$  & $\textbf{3.9938}$ \\
 \hline
         & $5$  & $10$   & $500$     & $4.0159e-5$    & $*$      &$5$ & $50$    & $125$   & $3.2297e+2$   &  $*$           \\
         & $10$ & $20$   & $2000$    & $1.9326e-5$    & $1.0552$ &$10$& $100$   & $500$   & $2.3429e-5$   &  $2371.7$ \\
  $r=1/5$& $20$ & $40$   & $8000$    & $5.4520e-6$    & $1.8257$ &$20$& $200$   & $2000$  & $5.9274e-6$   &  $1.9828$ \\
         & $40$ & $80$   & $32000$   & $1.4000e-6$    & $1.9613$ &$40$& $400$   & $8000$  & $1.4885e-6$   &  $1.9936$ \\
         & $80$ & $160$  & $128000$  & $3.5249e-7$    & $1.9898$ &$80$& $800$   & $32000$ & $3.7239e-7$   &  $1.9989$ \\
  \hline
\end{tabular}\label{tab2}
}
\end{table}
\vspace{-10mm}
\begin{table}[H]
\caption{The errors in $L^{\infty}$-norm versus grid sizes reduction and convergence orders of the classical Euler
difference scheme \eqref{2Deqn8}  for the linear convection-diffusion equation in Example \ref{exam2}}
\vspace{-3mm}
\centering
\setlength\tabcolsep{2.2mm}{
\begin{tabular}{|c|ccccc|ccccc|}
\hline
\multicolumn{0}{|c|}{ }&\multicolumn{5}{c|}{\textbf{Case~I}}&\multicolumn{5}{c|}{\textbf{Case~II}}\\
         \cline{2-6}\cline{7-11}\diagbox{R}{P}
         & $m_1$  & $m_2$ & $n$    & ${E}_{\infty}(h_x,h_y,\tau)$& ${\rm Ord_G}$  & $m_1$  & $m_2$ & $n$  & ${E}_{\infty}(h_x,h_y,\tau)$& ${\rm Ord_G}$  \\
         \cline{1-6}\cline{7-11}
         & $5$  & $10$    & $700$    & $4.6670e-4$    & $*$      &$5$ & $50$    & $175$   & $8.9757e-4$  & $*$      \\
         & $10$ & $20$    & $2800$   & $1.1650e-4$    & $2.0021$ &$10$& $100$   & $700$   & $2.2506e-4$  & $1.9957$ \\
  $r=1/7$& $20$ & $40$    & $11200$  & $2.9304e-5$    & $1.9912$ &$20$& $200$   & $2800$  & $5.6308e-5$  & $1.9989$ \\
         & $40$ & $80$    & $44800$  & $7.3252e-6$    & $2.0001$ &$40$& $400$   & $11200$ & $1.4101e-5$  & $1.9975$ \\
         & $80$ & $160$   & $179200$ & $1.8322e-6$    & $1.9993$ &$80$& $800$   & $44800$ & $3.5254e-6$  & $1.9999$ \\
 \hline
         & $5$  & $10$    & $600$    & $4.6947e-4$    & $*$      &$5$ & $50$    & $150$   & $9.6777e-4$  & $*$      \\
         & $10$ & $20$    & $2400$   & $1.1720e-4$    & $2.0021$ &$10$& $100$   & $600$   & $2.4273e-4$  & $1.9953$ \\
  $r=1/6$& $20$ & $40$    & $9600$   & $2.9479e-5$    & $1.9912$ &$20$& $200$   & $2400$  & $6.0733e-5$  & $1.9988$ \\
         & $40$ & $80$    & $38400$  & $7.3691e-6$    & $2.0001$ &$40$& $400$   & $9600$  & $1.5209e-5$  & $1.9975$ \\
         & $80$ & $160$   & $153600$ & $1.8431e-6$    & $1.9993$ &$80$& $800$   & $38400$ & $3.8026e-6$  & $1.9999$ \\
 \hline
         & $5$  & $10$   & $500$     & $4.7336e-4$    & $*$      &$5$ & $50$    & $125$   & $1.0660e-3$  & $*$      \\
         & $10$ & $20$   & $2000$    & $1.1817e-4$    & $2.0020$ &$10$& $100$   & $500$   & $2.6747e-4$  & $1.9947$ \\
  $r=1/5$& $20$ & $40$   & $8000$    & $2.9724e-5$    & $1.9912$ &$20$& $200$   & $2000$  & $6.6929e-5$  & $1.9987$ \\
         & $40$ & $80$   & $32000$   & $7.4305e-6$    & $2.0001$ &$40$& $400$   & $8000$  & $1.6761e-5$  & $1.9975$ \\
         & $80$ & $160$  & $128000$  & $1.8585e-6$    & $1.9993$ &$80$& $800$   & $32000$ & $4.1906e-6$  & $1.9999$ \\
  \hline
\end{tabular}\label{table2}
}
\end{table}
\end{example}
\vspace{-8mm}
\begin{example}\label{exam3}
We test the convergence rate and solution behavior to the problem \eqref{eqn1} with the variable coefficients $c(\mathbf{x}) = \sin(x+y)$ and $d(\mathbf{x})=\cos(x+y)$ by the difference scheme \eqref{2Deqn1}. The initial condition is taken as $\varphi(\mathbf{x}) = \exp(-x^2-y^2)$ and boundary value conditions are homogeneous. The exact solution is unknown. The parameters are taken as $L_1=L_2=-5$, $R_1=R_2=5$, $T=1$ and the diffusion coefficients are respectively taken as
\vspace{-3mm}
\begin{itemize}
  \item []{\rm \textbf{Case~I:~}} $a=4,  b =1;$\quad {\rm \textbf{Case~II:~}} $a=1, b = 0.01;$
  \item []{\rm \textbf{Case~III:~}} $a=100,  b =100;$\quad {\rm \textbf{Case~IV:~}} $a = 0.01,  b = 0.01.$
\end{itemize}

The numerical results are shown in Tables \ref{tab3a}--\ref{tab3b} and Figure \ref{fig1}. Since the exact solution is unknown, we use the second method in \eqref{eqn5.1} to test the convergence rate. As we see from Tables \ref{tab3a} and \ref{tab3b}, all the results are in agreement with our theoretical findings.
Moreover, numerical surfaces are demonstrated in Figure \ref{fig1} with the optimal step-ratio $r=1/6$, which further confirm the resolution performance of the corrected difference scheme \eqref{2Deqn1}. It is worth mentioning that the smaller diffusion coefficients $a$ and $b$ are, the more dense grids required $($see {\rm \textbf{Case IV}}  in Table \ref{tab3b}$)$.
   \vspace{-5mm}
\begin{table}[H]
\caption{The errors in $L^{\infty}$-norm versus grid sizes reduction and convergence orders of the corrected
difference scheme \eqref{2Deqn1}  for the linear convection-diffusion equation with the variable coefficients in Example \ref{exam3}}
\vspace{-3mm}
\centering
\setlength\tabcolsep{2.2mm}{
\begin{tabular}{|c|ccccc|ccccc|}
\hline
\multicolumn{0}{|c|}{ }&\multicolumn{5}{c|}{\textbf{Case~I}}&\multicolumn{5}{c|}{\textbf{Case~II}}\\
         \cline{2-6}\cline{7-11}\diagbox{R}{P}
         & $m_1$  & $m_2$ & $n$    & ${E}_{\infty}(h_x,h_y,\tau)$& ${\rm Ord_G}$  & $m_1$  & $m_2$ & $n$  & ${E}_{\infty}(h_x,h_y,\tau)$& ${\rm Ord_G}$  \\
         \cline{1-6}\cline{7-11}
         & $10$  & $20$    & $28$    & $3.5063e-04$    & $*$      &$10$ & $100$    & $7$    & $4.7662e-03$  & $*$           \\
         & $20$  & $40$    & $112$   & $9.5061e-05$    & $1.8830$ &$20$ & $200$    & $28$   & $1.0877e-03$  & $2.1315$ \\
  $r=1/7$& $40$  & $80$    & $448$   & $2.4605e-05$    & $1.9499$ &$40$ & $400$    & $112$  & $3.3119e-04$  & $1.7156$ \\
         & $80$  & $160$   & $1792$  & $6.1907e-06$    & $1.9908$ &$80$ & $800$    & $448$  & $8.6900e-05$  & $1.9303$ \\
         & $160$ & $320$   & $7168$  & $1.5513e-06$    & $1.9966$ &$160$& $1600$   & $1792$ & $2.2006e-05$  & $1.9815$ \\
 \hline
         & $10$  & $20$    & $24$    & $1.3708e-04$    & $*$      &$10$   & $100$         & $6$    & $1.0013e-02$   & $*$           \\
         & $20$  & $40$    & $96$    & $8.6521e-06$    & $\textbf{3.9858}$ &$20$& $200$   & $24$   & $5.7511e-04$   & $\textbf{4.1218}$ \\
$r=\textbf{1/6}$ & $40$ & $80$& $384$& $5.4266e-07$    & $\textbf{3.9949}$ &$40$& $400$   & $96$   & $3.5329e-05$   & $\textbf{4.0249}$ \\
         & $80$  & $160$    & $1536$ & $3.3796e-08$    & $\textbf{4.0051}$ &$80$& $800$   & $384$  & $2.2085e-06$   & $\textbf{3.9997}$ \\
         & $160$ & $320$   & $6144$  & $2.1103e-09$    & $\textbf{4.0013}$ &$160$& $1600$& $1536$  & $1.3778e-07$   & $\textbf{4.0027}$ \\
 \hline
         & $10$  & $20$   & $20$     & $7.2761e-04$    & $*$      &$10$ & $100$    & $5$    & $2.1408e-02$   &  $*$           \\
         & $20$  & $40$   & $80$     & $1.4741e-04$    & $2.3033$ &$20$ & $200$    & $20$   & $2.7078e-03$   &  $2.9830$ \\
  $r=1/5$& $40$  & $80$   & $320$    & $3.5263e-05$    & $2.0636$ &$40$ & $400$    & $80$   & $5.4266e-04$   &  $2.3190$ \\
         & $80$  & $160$  & $1280$   & $8.7176e-06$    & $2.0161$ &$80$ & $800$    & $320$  & $1.2652e-04$   &  $2.1007$ \\
         & $160$ & $320$  & $5120$   & $2.1751e-06$    & $2.0028$ &$160$& $1600$   & $1280$ & $3.1116e-05$   &  $2.0236$ \\
\hline
\end{tabular}\label{tab3a}
}
\end{table}
\vspace{-12mm}
\begin{table}[H]
\caption{The errors in $L^{\infty}$-norm versus grid sizes reduction and convergence orders of the corrected
difference scheme \eqref{2Deqn1}  for the linear convection-diffusion equation with the variable coefficients in Example \ref{exam3}}
\vspace{-3mm}
\centering
\setlength\tabcolsep{2.1mm}{
\begin{tabular}{|c|ccccc|ccccc|}
\hline
\multicolumn{0}{|c|}{ }&\multicolumn{5}{c|}{\textbf{Case~III}}&\multicolumn{5}{c|}{\textbf{Case~IV}}\\
         \cline{2-6}\cline{7-11}\diagbox{R}{P}
         & $m_1$  & $m_2$ & $n$    & ${E}_{\infty}(h_x,h_y,\tau)$& ${\rm Ord_G}$  & $m_1$  & $m_2$ & $n$  & ${E}_{\infty}(h_x,h_y,\tau)$& ${\rm Ord_G}$  \\
         \cline{1-6}\cline{7-11}
         & $10$  & $10$    & $700$    & $5.8207e-12$    & $*$      &$80$ & $80$      & $4$    & $7.8466e-02$  & $*$           \\
         & $20$  & $20$    & $2800$   & $1.3892e-12$    & $2.0670$ &$160$& $160$     & $18$   & $2.1898e-03$  & $5.1632$ \\
  $r=1/7$& $40$  & $40$    & $11200$   & $3.4475e-13$   & $2.0106$ &$320$& $320$     & $72$   & $1.5272e-04$  & $3.8419$ \\
         & $80$  & $80$    & $44800$  & $8.6028e-14$    & $2.0027$ &$640$ & $640$    & $287$  & $4.0011e-05$  & $1.9324$ \\
         & $160$ & $160$   & $179200$  & $2.1497e-14$   & $2.0007$ &$1280$& $1280$   & $1147$ & $1.1807e-05$  & $1.7607$ \\
 \hline
         & $10$  & $10$    & $600$    & $2.0539e-13$    & $*$               &$80$  & $80$    & $4$    & $7.8466e-02$   & $*$  \\
         & $20$  & $20$    & $2400$    & $7.0272e-15$   & $\textbf{4.8693}$ &$160$ & $160$   & $15$   & $3.6061e-03$   & $\textbf{4.4436}$ \\
$r=\textbf{1/6}$ & $40$ & $40$& $9600$& $4.3983e-16$    & $\textbf{3.9979}$ &$320$ & $320$   & $61$   & $2.2628e-04$   & $\textbf{3.9942}$ \\
         & $80$  & $80$    & $38400$ & $2.7499e-17$     & $\textbf{3.9995}$ &$640$ & $640$   & $246$  & $1.3218e-05$   & $\textbf{4.0975}$ \\
         & $160$ & $160$   & $153600$  & $1.7195e-18$   & $\textbf{3.9993}$ &$1280$& $1280$  & $983$  & $8.5304e-07$   & $\textbf{3.9538}$ \\
 \hline
         & $10$  & $10$   & $500$     & $7.4220e-12$    & $*$      &$80$   & $80$     & $3$    & $1.9008e-01$   &  $*$           \\
         & $20$  & $20$   & $2000$     & $1.9139e-12$   & $1.9553$ &$160$  & $160$    & $13$   & $5.2300e-03$   &  $5.1837$ \\
  $r=1/5$& $40$  & $40$   & $8000$    & $4.8071e-13$    & $1.9933$ &$320$  & $320$    & $51$   & $5.5828e-04$   &  $3.2277$ \\
         & $80$  & $80$   & $32000$   & $1.2032e-13$    & $1.9983$ &$640$  & $640$    & $205$  & $8.6098e-05$   &  $2.6969$ \\
         & $160$ & $160$  & $128000$   & $3.0088e-14$   & $1.9996$ &$1280$ & $1280$   & $819$  & $1.8490e-05$   &  $2.2192$ \\
  \hline
\end{tabular}\label{tab3b}
}
\end{table}

\begin{figure}[htbp]
\centering
 \subfigure[Numerical surface]{\centering
\includegraphics[width=0.31\textwidth]{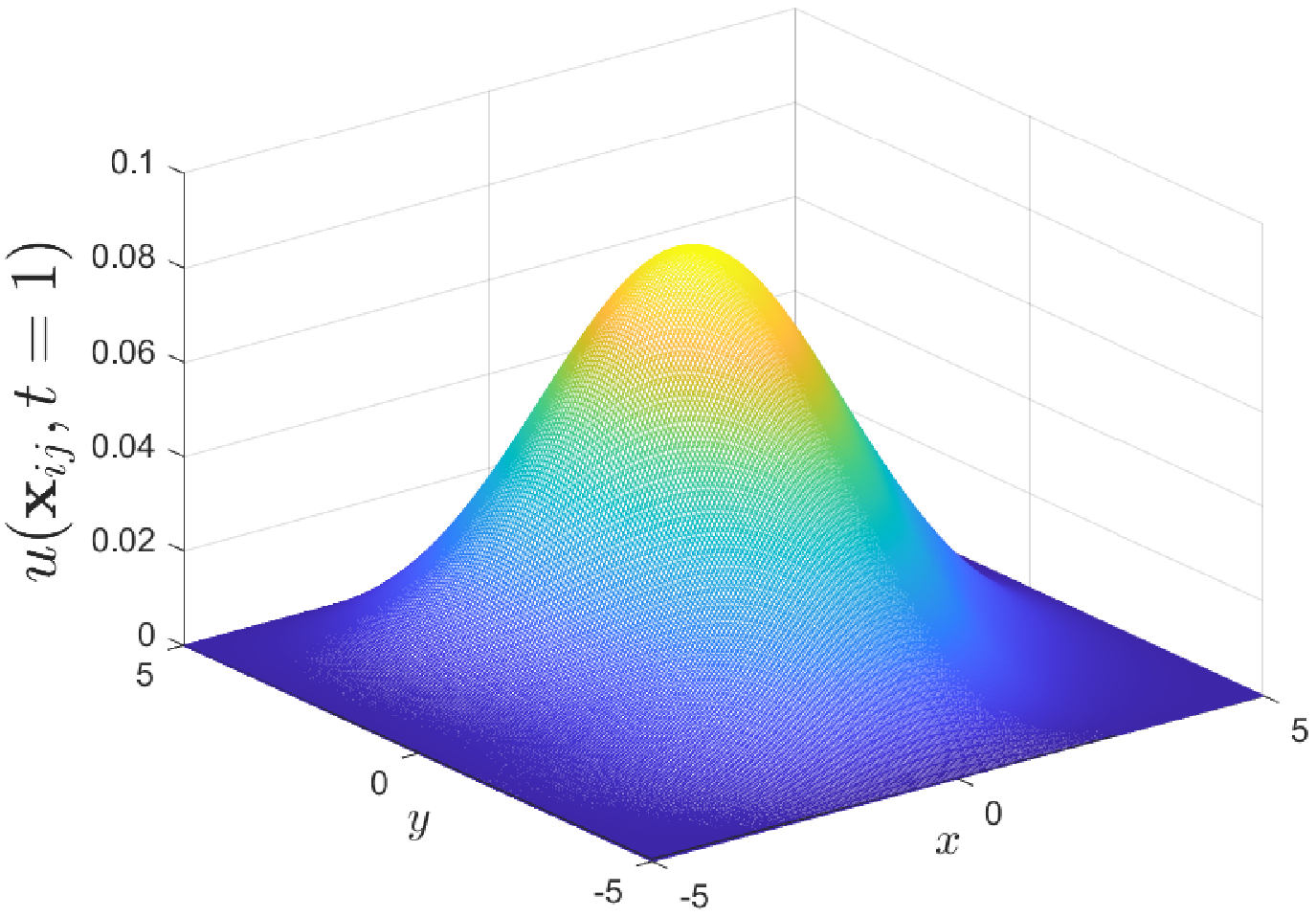}
}\subfigure[Numerical surface]{\centering
\includegraphics[width=0.31\textwidth]{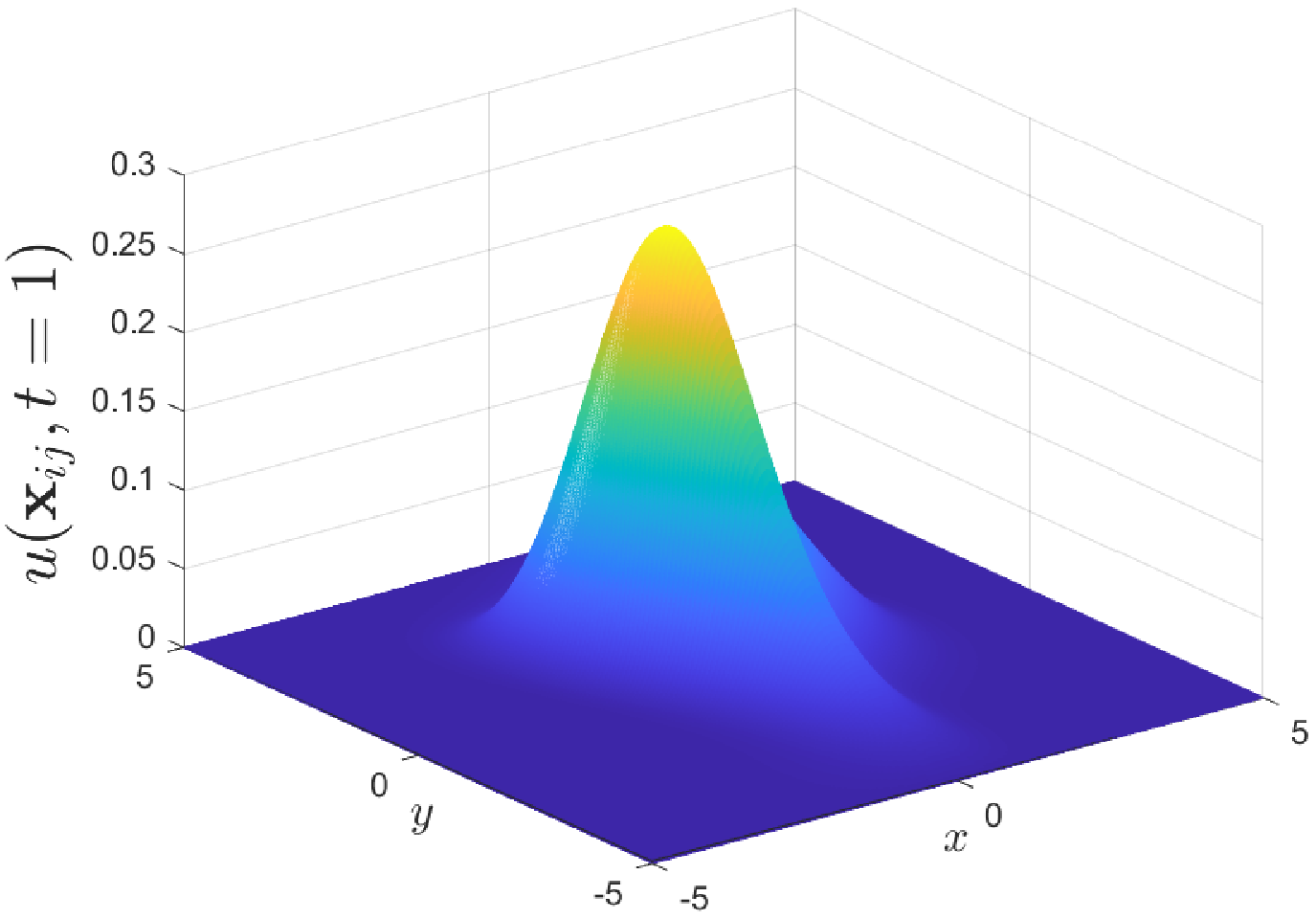}
}\subfigure[Numerical surface]{\centering
\includegraphics[width=0.31\textwidth]{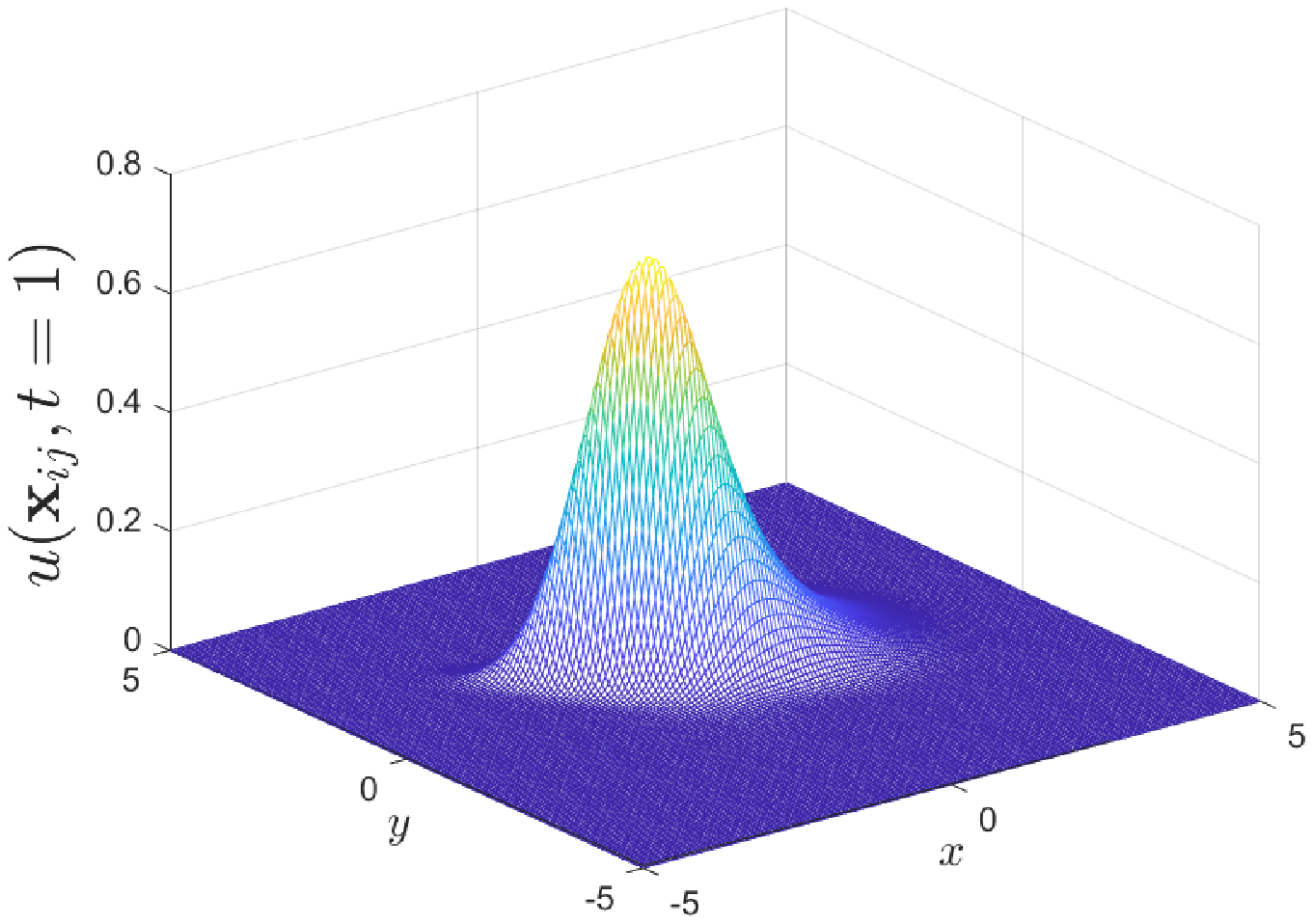}
}\\
\subfigure[Contour]{\centering
\includegraphics[width=0.31\textwidth]{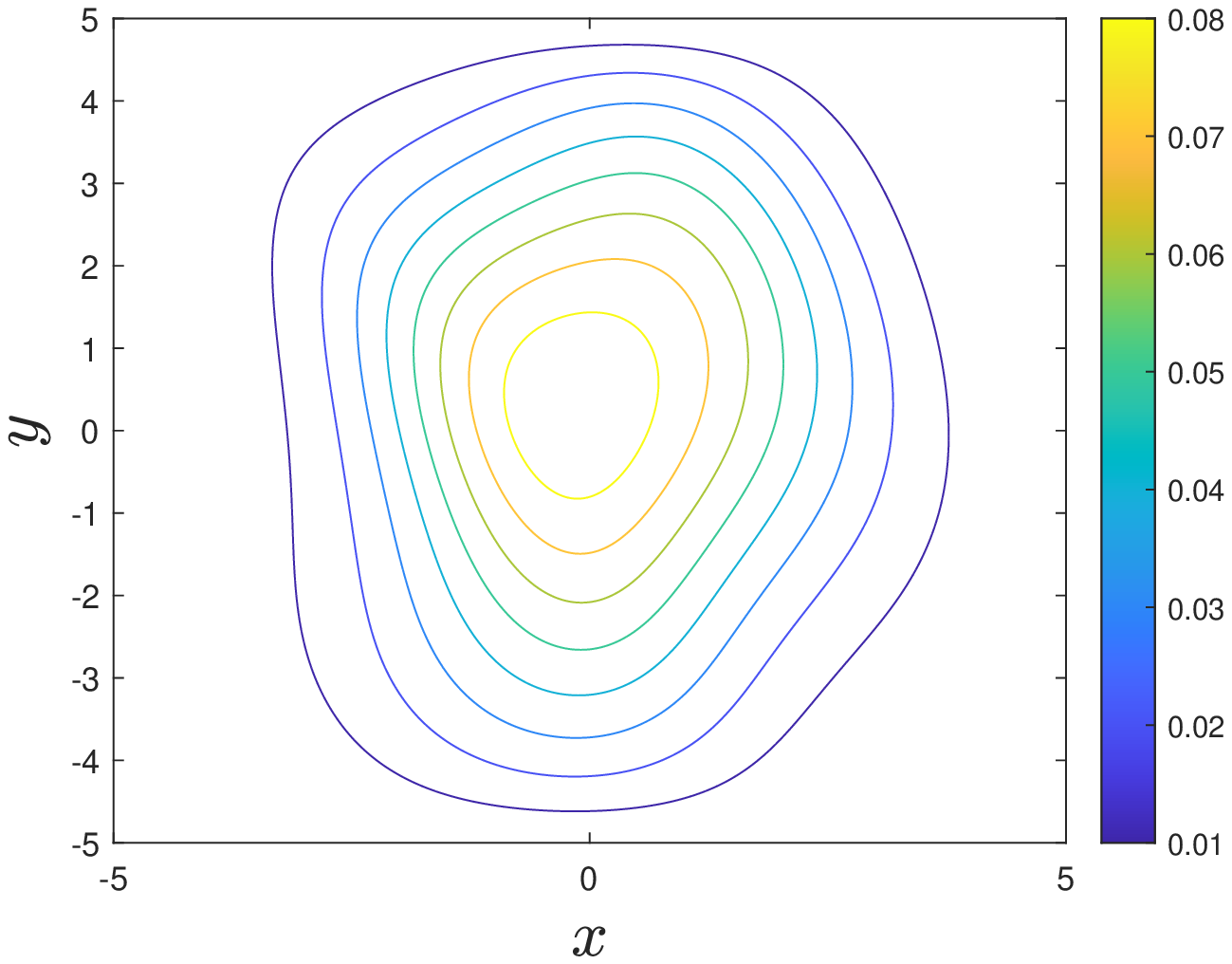}
}\subfigure[Contour]{\centering
\includegraphics[width=0.31\textwidth]{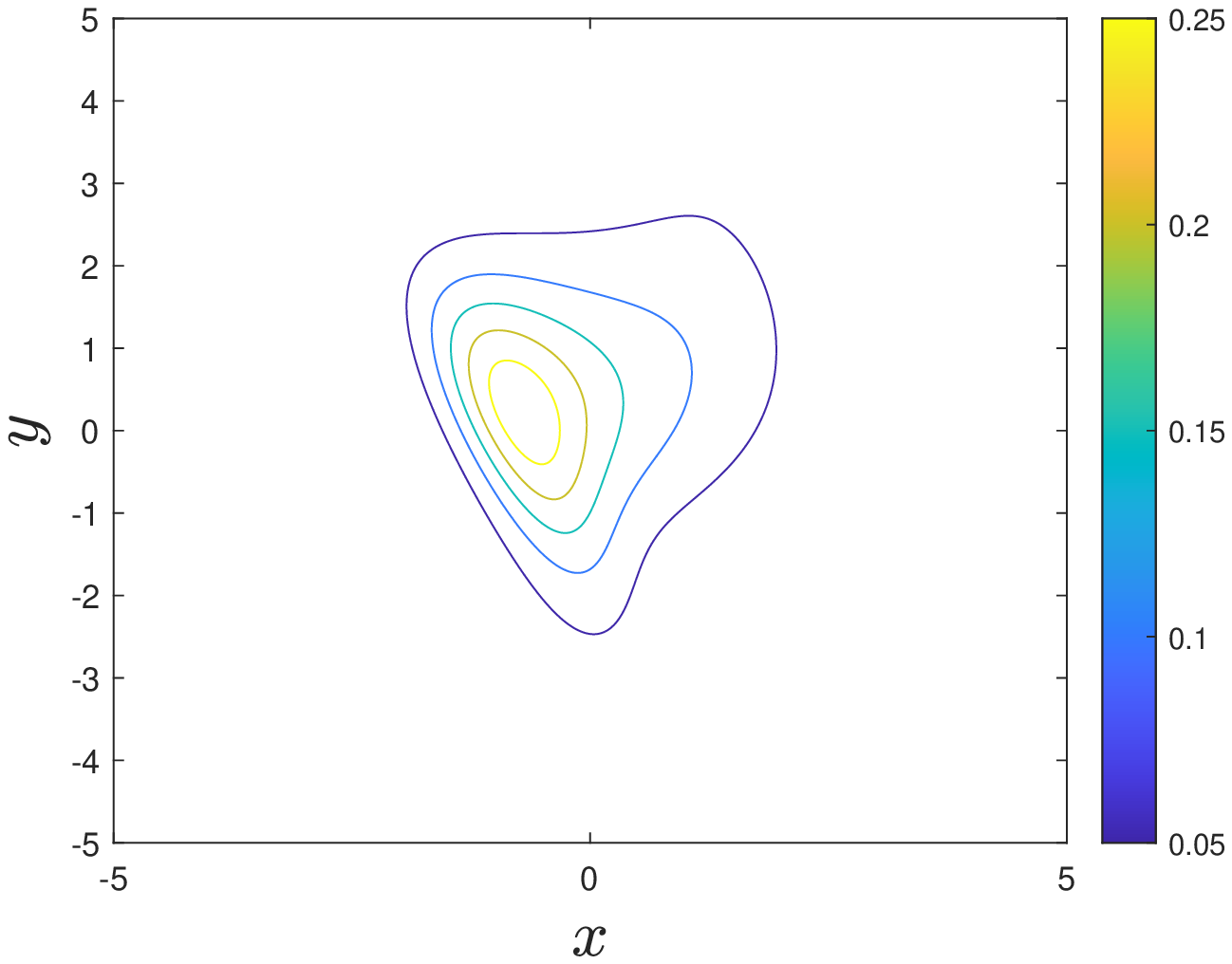}
}\subfigure[Contour]{\centering
\includegraphics[width=0.31\textwidth]{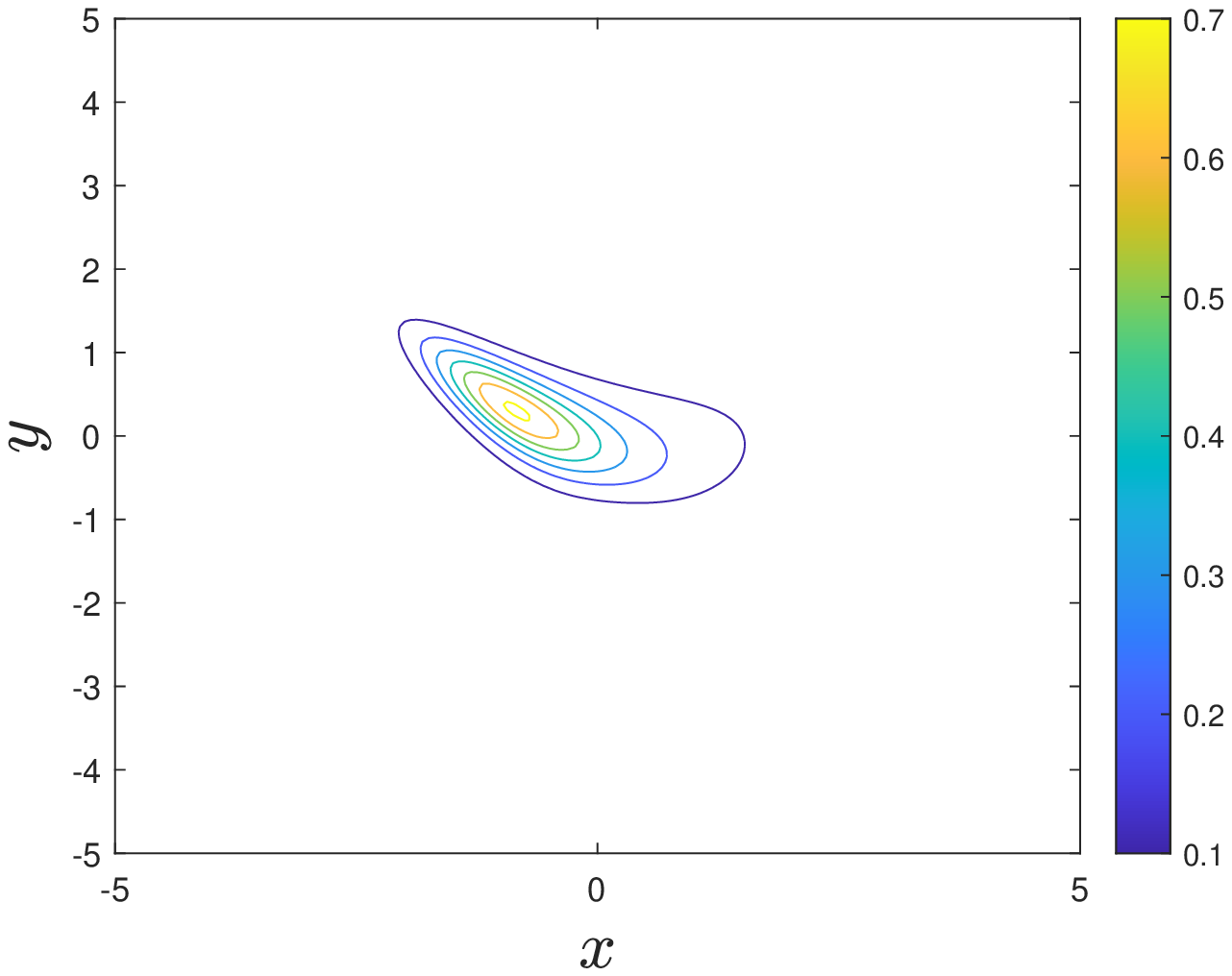}
}
\caption{Numerical surfaces and corresponding contours at the same terminal time $t=1$ in Example \ref{exam3}. The coefficients and parameters are respectively taken as (a) and (d): $a=4$, $b=1$, $m_1=160$, $m_2=320$, $n = 6144$; (b) and (e): $a=1$, $b=0.01$, $m_1=160$, $m_2=1600$, $n=1536$; (c) and (f): $a=0.04$, $b=0.04$, $m_1=320$, $m_2=320$, $n=246$.} \label{fig1}
\end{figure}
\end{example}
\subsection{Nonlinear problems}
We will test Example \ref{Exam4} and Example \ref{Exam5} under Dirichlet boundary conditions by the difference scheme \eqref{eqn4.3}, \eqref{2Deqn1b}--\eqref{2Deqn1c}  and under Neumann boundary conditions by the difference scheme \eqref{eqn4.3}, \eqref{eqn6.2aa}--\eqref{eqn6.5aa} respectively.
\begin{example}\label{Exam4}
  We first consider semi-linear diffusion reaction equations as
  \begin{align*}
    u_t=\Delta u + r(u),\quad  \mathbf{x} \in (0,1)^2, \quad t\in (0,T],
  \end{align*}
  where the initial and boundary value conditions are determined by the exact solution.

  \begin{itemize}
  \item []{\rm \textbf{Case~I:~}} Fisher's equation {\rm \cite{QS1998}}: $r(u)=u(1-u)$. The exact solution is
  \begin{align*}
     u(\mathbf{x},t) = \bigg[1+\exp\Big(-\frac{5}{6}t +\frac{\sqrt{3}}{6}x +\frac{\sqrt{3}}{6}y\Big)\bigg]^{-2};
  \end{align*}
  \item []{\rm \textbf{Case~II:~}} Chafee-Infante equation {\rm \cite{CI1974}}: $r(u)=u(1-u^2)$. The exact solution is
  \begin{align*}
      u(\mathbf{x},t) = \frac12\tanh\Big(\frac14 x+\frac14 y + \frac34 t\Big)+\frac12.
  \end{align*}
\end{itemize}
\vspace{-15mm}
\begin{figure}[htbp]
\centering
 \subfigure[Numerical surface, \textbf{view}(45,36)]{\centering
\includegraphics[width=0.24\textwidth]{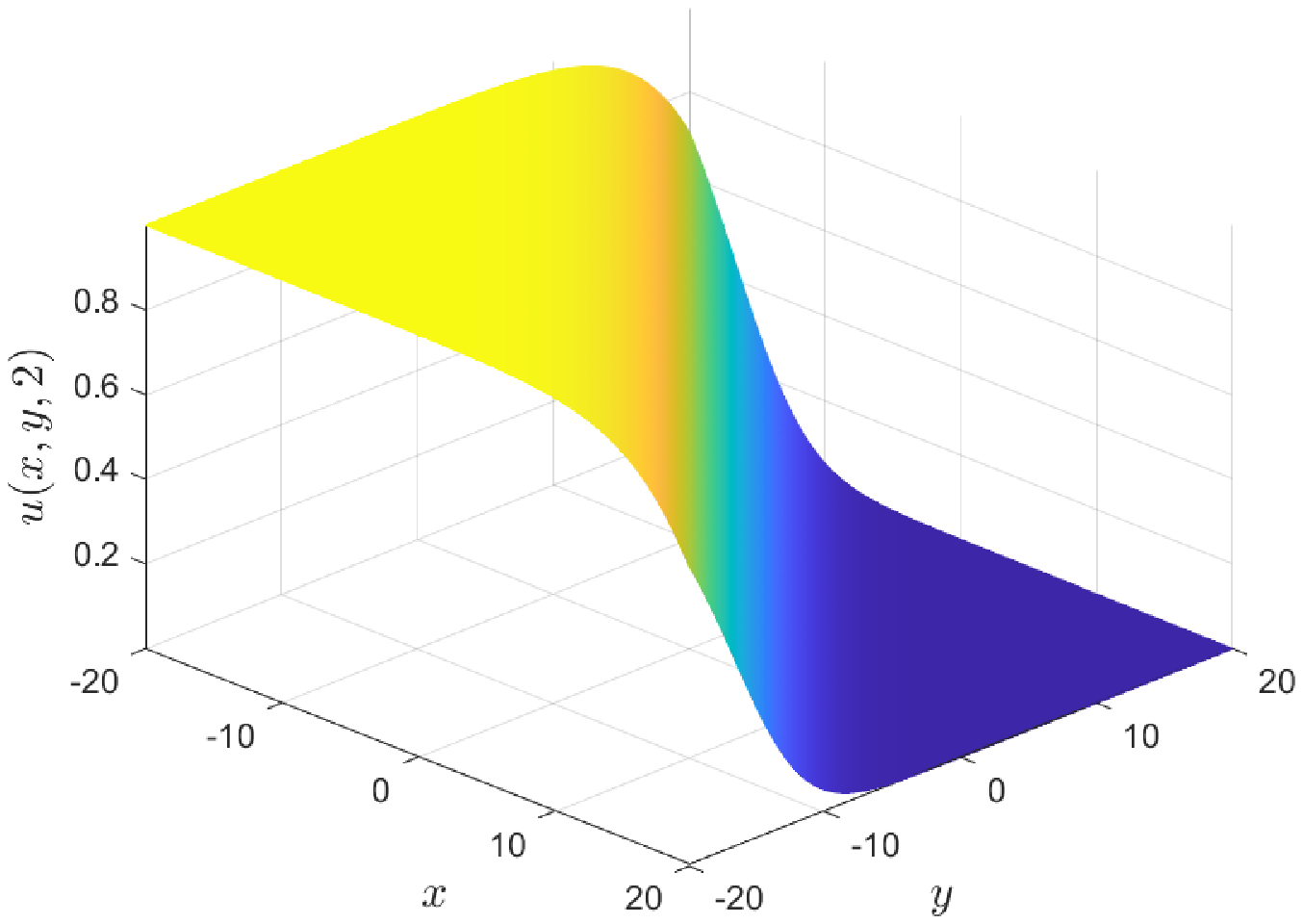}
}\subfigure[Error surface, \textbf{view}(45,36)]{\centering
\includegraphics[width=0.24\textwidth]{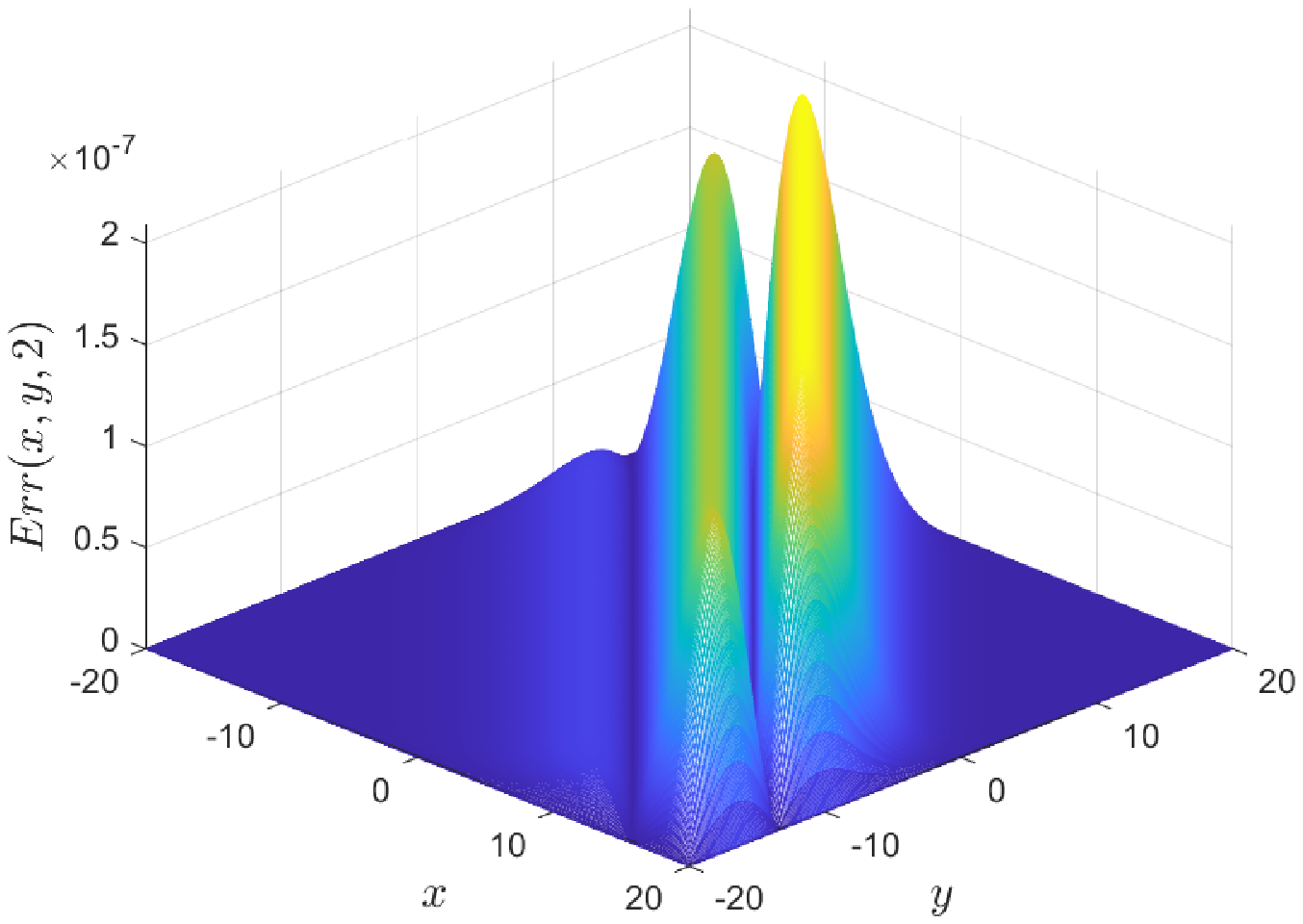}
}
\subfigure[Numerical surface, \textbf{view}(38,30)]{\centering
\includegraphics[width=0.24\textwidth]{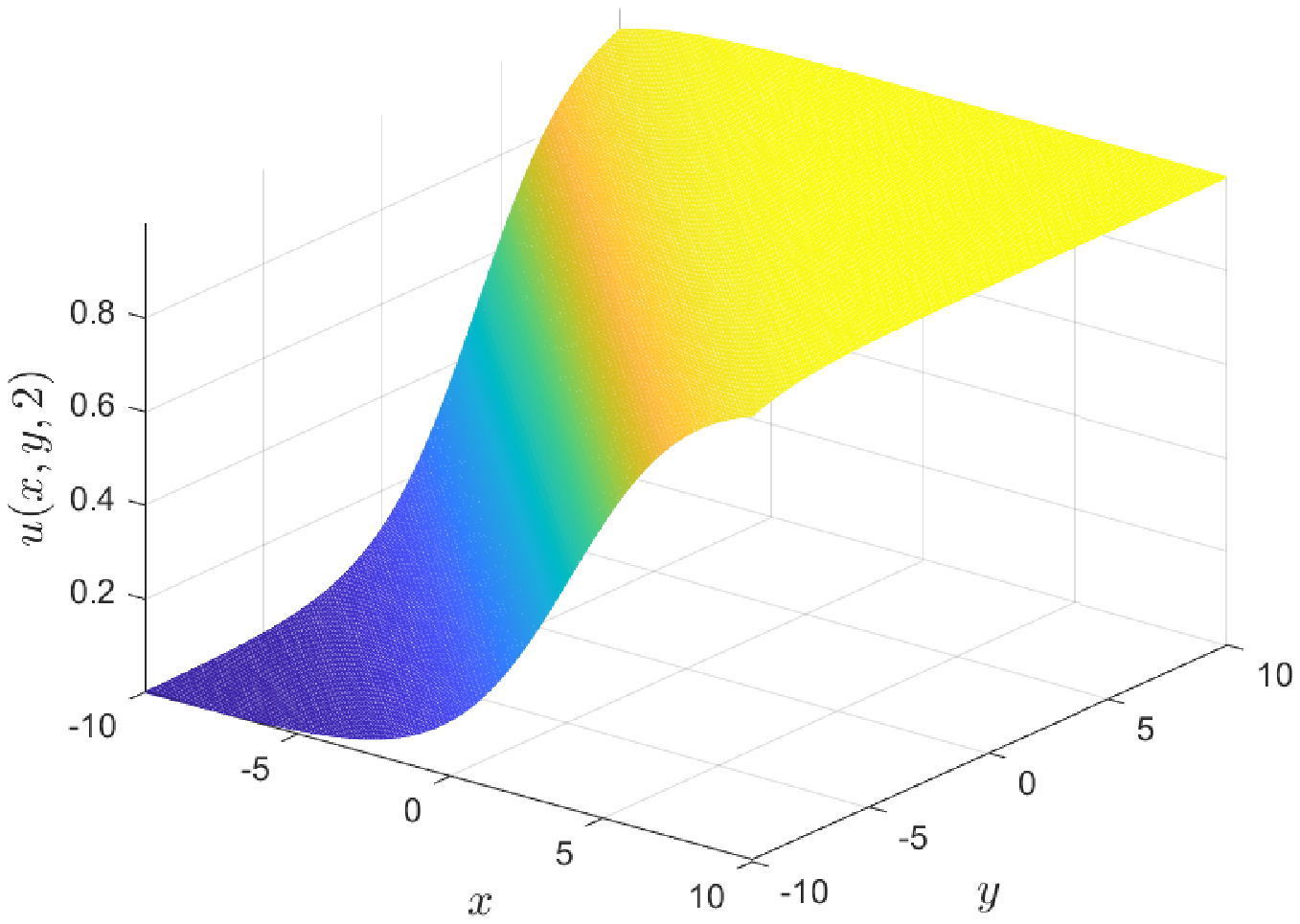}
}\subfigure[Error surface, \textbf{view}(38,30)]{\centering
\includegraphics[width=0.24\textwidth]{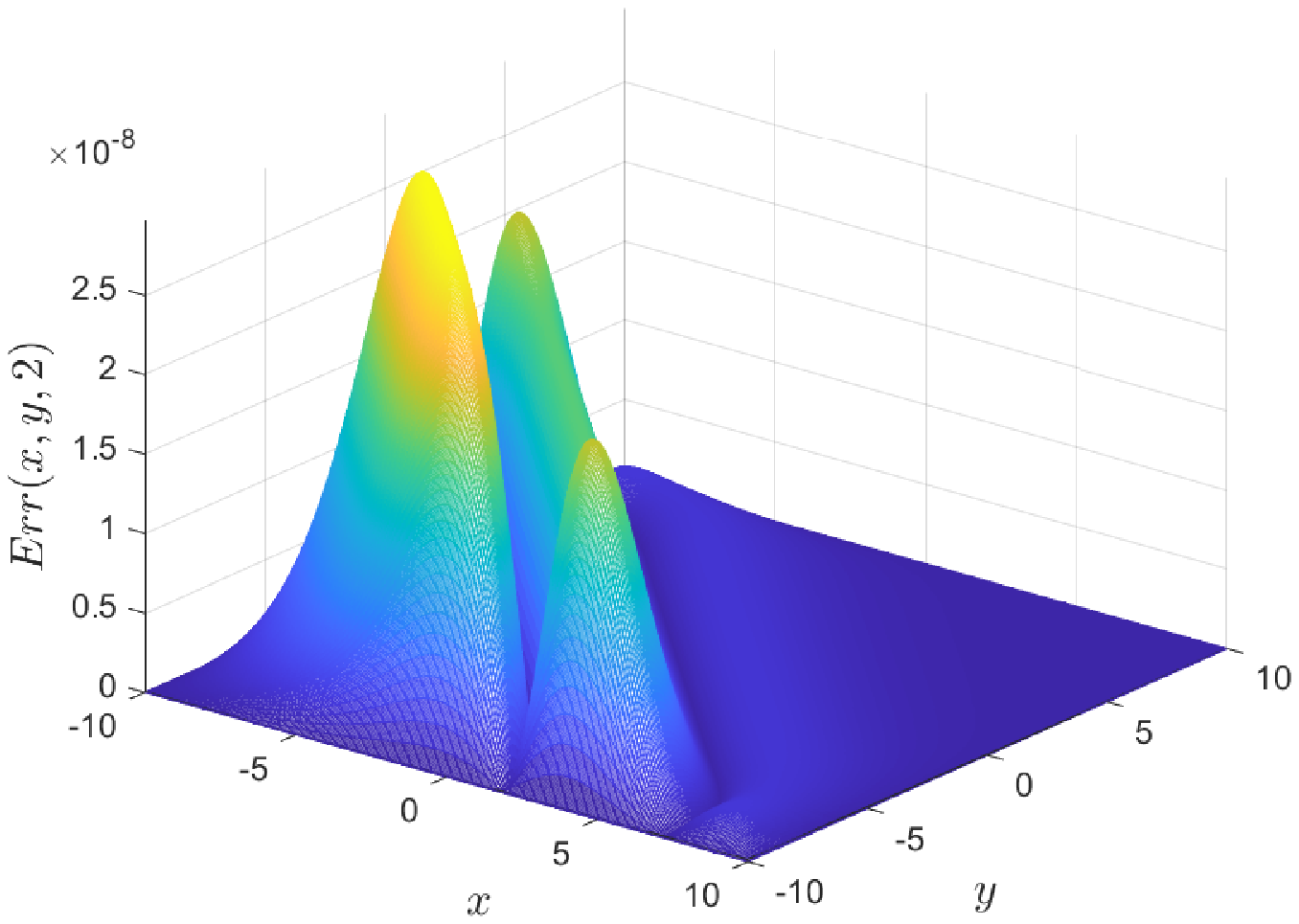}
}
\vspace{-5mm}
\caption{Numerical surfaces and error surfaces at the terminal time $t=2$.
(a)--(b): the Fisher equation, the domain are taken as $\Omega = [-20, 20]\times[-20,20]$ and mesh parameters $h_x=h_y=1/8, \tau=1/384$;
(c)--(d): the Chafee-Infante equation, the domain are taken as $\Omega = [-10, 10]\times[-10,10]$ and mesh parameters $h_x=h_y=1/16, \tau=1/1536$.} \label{fig2}
\end{figure}
     \vspace{-15mm}
\begin{table}[H]
\caption{The errors in $L^{\infty}$-norm versus grid sizes reduction and convergence orders of the corrected
difference scheme \eqref{eqn4.3} for the Fisher equation and Chafee-Infante equation under the \textbf{Dirichlet} boundary conditions in Example \ref{Exam4}}
\vspace{-4mm}
\centering
\setlength\tabcolsep{2.1mm}{
\begin{tabular}{|c|ccccc|ccccc|}
\hline
\multicolumn{0}{|c|}{ }&\multicolumn{5}{c|}{\textbf{Case~I}}&\multicolumn{5}{c|}{\textbf{Case~II}}\\
         \cline{2-6}\cline{7-11}\diagbox{R}{P}
         & $m_1$  & $m_2$ & $n$    & ${E}_{\infty}(h_x,h_y,\tau)$& ${\rm Ord_G}$  & $m_1$  & $m_2$ & $n$  & ${E}_{\infty}(h_x,h_y,\tau)$& ${\rm Ord_G}$  \\
         \cline{1-6}\cline{7-11}
         &$10$ & $10$  & $700$    & $3.2948e-9$  & $*$     & $10$  & $10$    & $700$    & $2.9377e-8$    & $*$      \\
         &$20$ & $20$  & $2800$   & $1.2599e-9$  & $1.3869$& $20$  & $20$    & $2800$   & $7.9078e-9$    & $1.8934$  \\
  $r=1/7$&$40$ & $40$  & $11200$  & $3.4607e-10$ & $1.8642$& $40$  & $40$    & $11200$  & $2.0121e-9$    & $1.9745$  \\
         &$80$ & $80$  & $44800$  & $8.8370e-11$ & $1.9694$& $80$  & $80$    & $44800$  & $5.0524e-10$   & $1.9937$  \\
         &$160$& $160$ & $179200$ & $2.2219e-11$ & $1.9918$& $160$ & $160$   & $179200$ & $1.2645e-10$   & $1.9984$  \\
 \hline
         & $10$ & $10$    & $600$    & $4.2836e-9$    & $*$& $10$ & $10$    & $600$& $4.4406e-9$    & $*$               \\
         & $20$ & $20$    & $2400$   & $2.6769e-10$   & $\textbf{4.0002}$   & $20$ & $20$    & $2400$   & $2.7710e-10$   & $\textbf{3.9890}$  \\
$r=\textbf{1/6}$& $40$&$40$& $9600$  & $1.6730e-11$   & $\textbf{4.0000}$   & $40$ & $40$    & $9600$& $1.7312e-11$      & $\textbf{3.9883}$  \\
         & $80$ & $80$    & $38400$  & $1.0456e-12$   & $\textbf{4.0001}$   & $80$ & $80$    & $38400$  & $1.0825e-12$   & $\textbf{3.9993}$  \\
         & $160$& $160$   & $153600$ & $6.5337e-14$   & $\textbf{4.0003}$  & $160$ & $160$   & $153600$ & $6.7946e-14$   & $\textbf{3.9982}$  \\
 \hline
         & $10$  & $10$   & $500$    & $1.3295e-8$   &  $*$ & $10$  & $10$   & $500$     & $5.1364e-8$   & $*$      \\
         & $20$  & $20$   & $2000$   & $2.3143e-9$   &  $2.5223$    & $20$  & $20$   & $2000$    & $1.1709e-8$   & $2.1331$  \\
  $r=1/5$& $40$  & $40$   & $8000$   & $5.1745e-10$  &  $2.1611$    & $40$  & $40$   & $8000$    & $2.8569e-9$   & $2.0351$  \\
         & $80$  & $80$   & $32000$  & $1.2585e-10$  &  $2.0398$    & $80$  & $80$   & $32000$   & $7.0982e-10$  & $2.0089$  \\
         & $160$ & $160$  & $128000$ & $3.1237e-11$  &  $2.0103$    & $160$ & $160$  & $128000$  & $1.7719e-10$  & $2.0022$  \\
  \hline
\end{tabular}\label{tab6a}
}
\end{table}
\vspace{-12mm}
\begin{table}[H]
\caption{The errors in $L^{\infty}$-norm versus grid sizes reduction and convergence orders of the corrected
difference scheme \eqref{eqn4.3} for the Fisher equation and Chafee-Infante equation with the \textbf{Neumann} boundary conditions in Example \ref{Exam4}}
\vspace{-4mm}
\centering
\setlength\tabcolsep{2.1mm}{
\begin{tabular}{|c|ccccc|ccccc|}
\hline
\multicolumn{0}{|c|}{ }&\multicolumn{5}{c|}{\textbf{Case~I}}&\multicolumn{5}{c|}{\textbf{Case~II}}\\
         \cline{2-6}\cline{7-11}\diagbox{R}{P}
         & $m_1$  & $m_2$ & $n$    & ${E}_{\infty}(h_x,h_y,\tau)$& ${\rm Ord_G}$  & $m_1$  & $m_2$ & $n$  & ${E}_{\infty}(h_x,h_y,\tau)$& ${\rm Ord_G}$  \\
         \cline{1-6}\cline{7-11}
         &$10$ & $10$  & $700$    & $3.7818e-7$ & $*$     & $10$  & $10$    & $700$    & $8.5997e-7$   & $*$       \\
         &$20$ & $20$  & $2800$   & $9.9509e-8$ & $1.9262$& $20$  & $20$    & $2800$   & $2.0788e-7$   & $2.0486$  \\
  $r=1/7$&$40$ & $40$  & $11200$  & $2.5240e-8$ & $1.9791$& $40$  & $40$    & $11200$  & $5.1631e-8$   & $2.0094$  \\
         &$80$ & $80$  & $44800$  & $6.3342e-9$ & $1.9945$& $80$  & $80$    & $44800$  & $1.2890e-8$   & $2.0020$  \\
         &$160$& $160$ & $179200$ & $1.5852e-9$ & $1.9985$& $160$ & $160$   & $179200$ & $3.2215e-9$   & $2.0005$  \\
 \hline
         & $10$ & $10$    & $600$    & $7.7947e-8$   & $*$& $10$ & $10$   & $600$ & $6.8543e-8$    & $*$               \\
         & $20$ & $20$    & $2400$   & $5.0469e-9$   & $\textbf{3.9490}$  & $20$ & $20$    & $2400$    & $4.6223e-9$   & $\textbf{3.8903}$  \\
$r=\textbf{1/6}$& $40$&$40$& $9600$  & $3.2070e-10$  & $\textbf{3.9761}$  & $40$ & $40$    & $9600$   & $2.9933e-10$      & $\textbf{3.9488}$  \\
         & $80$ & $80$    & $38400$  & $2.0186e-11$  & $\textbf{3.9898}$  & $80$ & $80$    & $38400$  & $1.9060e-11$   & $\textbf{3.9731}$  \\
         & $160$& $160$   & $153600$ & $1.2221e-12$  & $\textbf{4.0459}$  & $160$ & $160$   & $153600$ & $1.2601e-12$   & $\textbf{3.9190}$  \\
 \hline
         & $10$  & $10$  & $500$    & $6.3087e-7$ & $*$      & $10$  & $10$  & $500$    & $1.1272e-6$ & $*$   \\
         & $20$  & $20$  & $2000$   & $1.4610e-7$ & $2.1104$ & $20$  & $20$  & $2000$   & $2.8694e-7$ & $1.9739$  \\
  $r=1/5$& $40$  & $40$  & $8000$   & $3.5778e-8$ & $2.0299$ & $40$  & $40$  & $8000$   & $7.2064e-8$ & $1.9934$  \\
         & $80$  & $80$  & $32000$  & $8.8960e-9$ & $2.0078$ & $80$  & $80$  & $32000$  & $1.8034e-8$ & $1.9986$  \\
         & $160$ & $160$ & $128000$ & $2.2209e-9$ & $2.0020$ & $160$ & $160$ & $128000$ & $4.5095e-9$ & $1.9997$  \\
  \hline
\end{tabular}\label{tab6b}
}
\end{table}
\vspace{-5mm}

The numerical results for these two problems with $\tau=h^2$ and $T=1$ are listed in Table \ref{tab6a} for the Dirichlet boundary conditions and in Table \ref{tab6b} for the Neumann Boundary conditions. We clearly see that when the step-ratio $r=1/6$, the numerical accuracy is fourth-order. Meanwhile, the numerical results are much better than that calculated by using other step-ratios.
For example, when $m_1=m_2 = 160$, the maximum numerical error $10^{-14}$ is achieved with the optimal step-ratio $r=1/6$, which is about $1/1000$ of that obtained by the same spatial step sizes and smaller temporal step size. A similar phenomenon is observed under the Neumann boundary conditions.
The numerical solution behavior for both problems and corresponding error surfaces are displayed in Figure \ref{fig2} with the optimal step-ratio $r=1/6$.
\end{example}

\begin{example}\label{Exam5}
  Next, we solve the two-dimensional scalar quasi-linear Burgers' equation {\rm \cite{MS2007}} as
  \begin{align*}
    u_t+ (u^2/2)_x + (u^2/2)_y = \mu\Delta u,\quad \mathbf{x}\in \Omega, \quad t\in (0,1],
  \end{align*}
  where $\mu$ is the viscous coefficient.
  The initial and boundary value conditions are taken from the exact solution
  \begin{align*}
    u(\mathbf{x},t) = \frac{2\mu \pi \sin(\pi(x+y))\exp(-2\mu \pi^2 t)}{2+ \cos(\pi(x+y))\exp(-2\mu \pi^2 t)}.
  \end{align*}
  The following two sets of diffusion coefficients are considered.
\begin{itemize}
  \item []{\rm \textbf{Case~I:~}} $\mu = 1;$
  \quad {\rm \textbf{Case~II:~}} $\mu= 0.01;$
\end{itemize}

  The numerical convergence results on the domain $\Omega=[0,1]\times[0,1]$ are listed in Table \ref{tab7a} for the Dirichlet boundary conditions and in Table \ref{tab7b} for the Neumann boundary conditions.
  They confirm that the corrected difference scheme is still fourth-order accuracy
  when the step-ratio $r=1/6$ and second-order accuracy for other step-ratios,
  which fully demonstrates the good performance in accuracy of the novel corrected scheme \eqref{eqn4.3}.
  Comparing the results in Tables \ref{tab7a} and \ref{tab7b}, we see that the numerical errors for the Neumann boundary conditions are larger than those for the Dirichlet boundary conditions. This is mainly caused by the discretization on the boundary conditions.
  The numerical simulation for the Burgers' equation with viscous coefficients $\mu =1$ $(t=0.1)$,
  $\mu = 0.01$ $(t=0.5)$ is demonstrated in Figure \ref{fig3} respectively on the domain $\Omega= [-3,3]\times[-3, 3]$.
  We see that the numerical error surfaces keeps very low whether the viscous coefficient $\mu$ is large or small.
\vspace{-10mm}
\begin{figure}[htbp]
\centering
 \subfigure[Numerical surface, \textbf{view}(38,30)]{\centering
\includegraphics[width=0.24\textwidth]{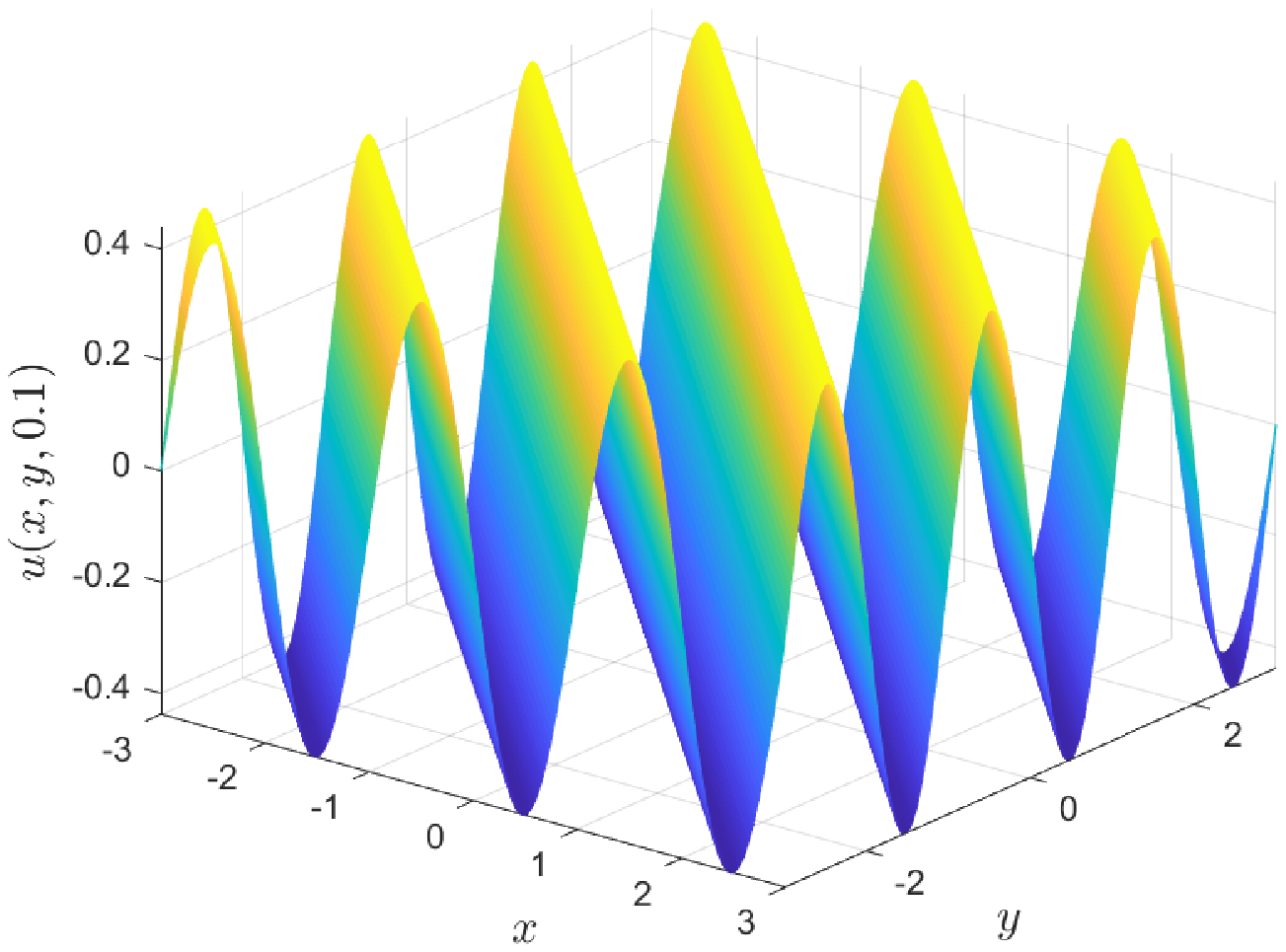}
}\subfigure[Error surface, \textbf{view}(38,30)]{\centering
\includegraphics[width=0.24\textwidth]{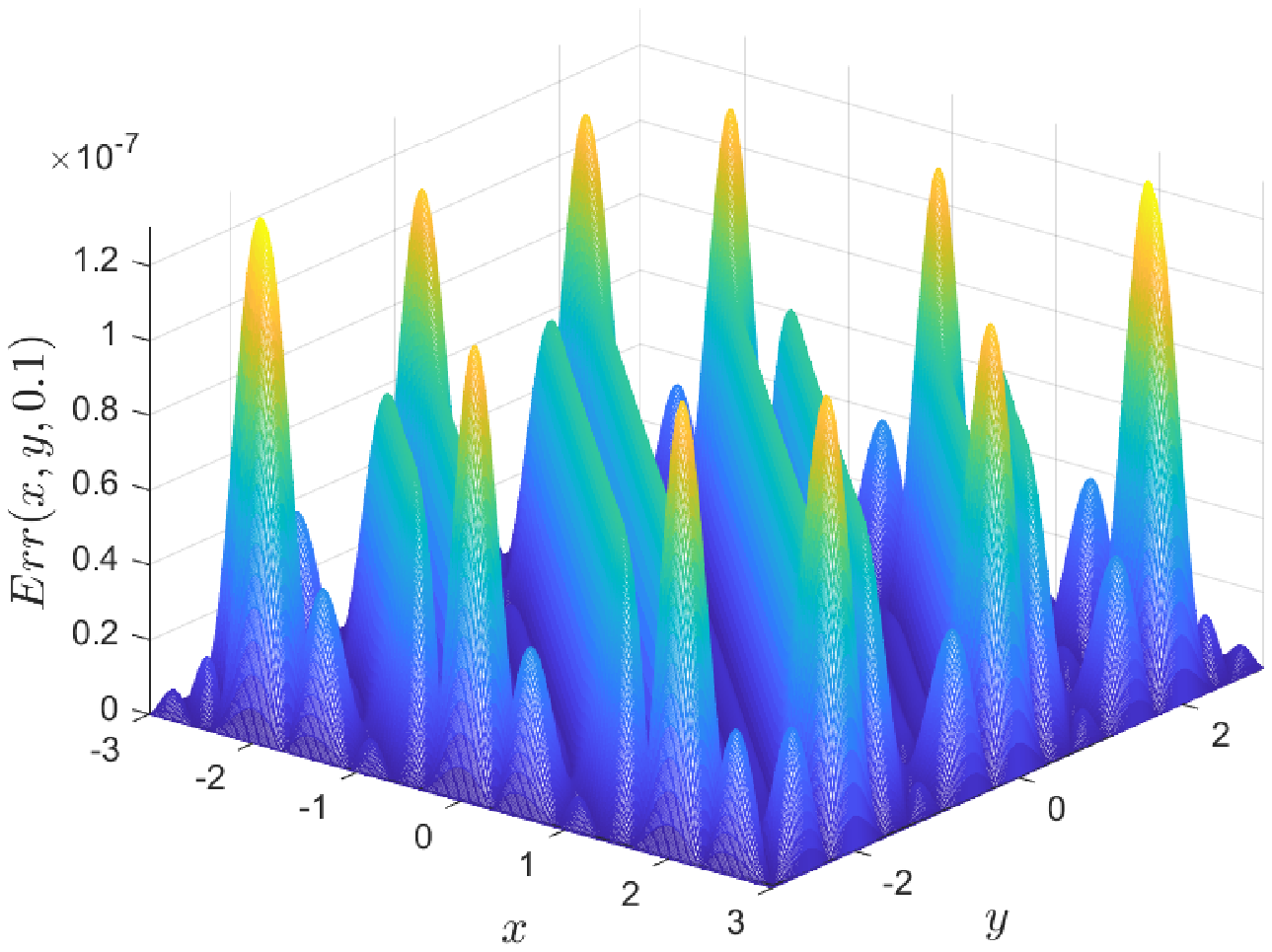}
}
\subfigure[Numerical surface, \textbf{view}(38,30)]{\centering
\includegraphics[width=0.24\textwidth]{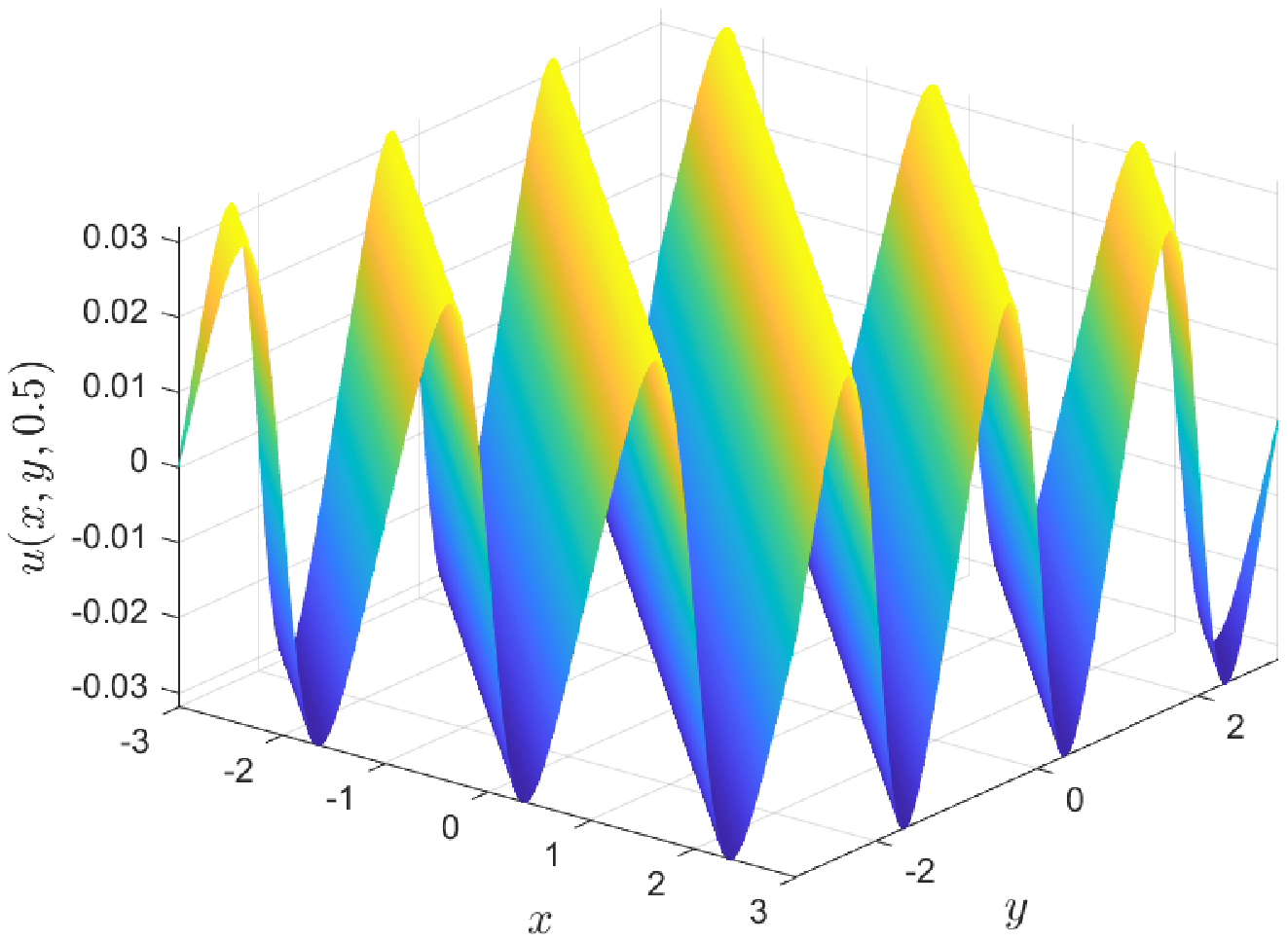}
}\subfigure[Error surface, \textbf{view}(38,30)]{\centering
\includegraphics[width=0.24\textwidth]{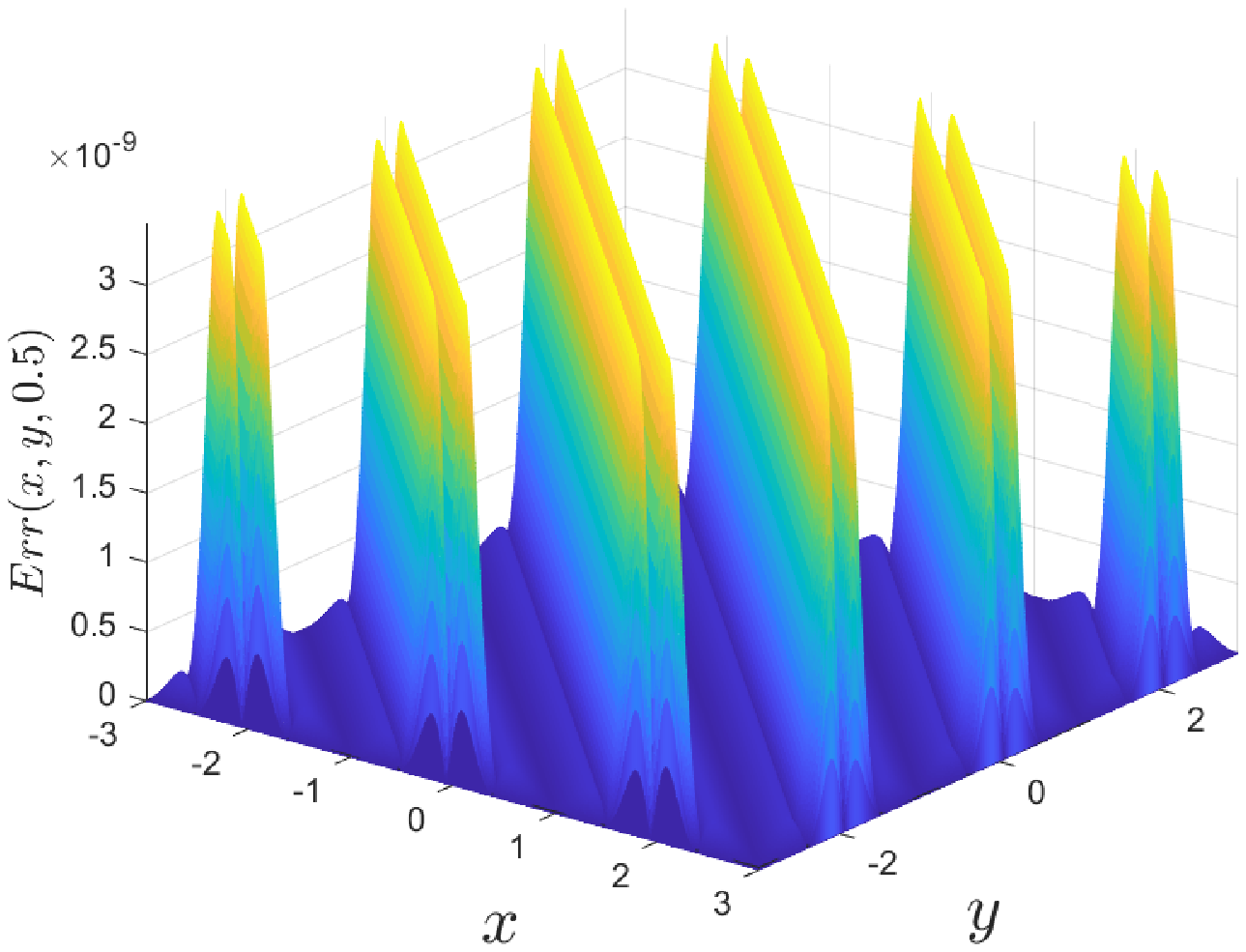}
}
\caption{Numerical surfaces and error surfaces for the Burgers' equation, (a)--(b): $\mu=1$, $M_1=M_2=320$, $N=1707$, $t=0.1$
 (c)--(d): $\mu=0.01$, $M_1=M_2=640$, $N=341$, $t=0.5$.} \label{fig3}
\end{figure}
\vspace{-10mm}
\begin{table}[H]
\caption{The errors in $L^{\infty}$-norm versus grid sizes reduction and convergence orders of the corrected
difference scheme \eqref{eqn4.3} for the viscous Burgers' equation under the \textbf{Dirichlet} boundary conditions in Example \ref{Exam5}}
\vspace{-3mm}
\centering
\setlength\tabcolsep{2.1mm}{
\begin{tabular}{|c|ccccc|ccccc|}
\hline
\multicolumn{0}{|c|}{ }&\multicolumn{5}{c|}{\textbf{Case~I}}&\multicolumn{5}{c|}{\textbf{Case~II}}\\
         \cline{2-6}\cline{7-11}\diagbox{R}{P}
         & $m_1$  & $m_2$ & $n$    & ${E}_{\infty}(h_x,h_y,\tau)$& ${\rm Ord_G}$  & $m_1$  & $m_2$ & $n$  & ${E}_{\infty}(h_x,h_y,\tau)$& ${\rm Ord_G}$  \\
         \cline{1-6}\cline{7-11}
         & $10$  & $10$    & $700$    & $8.3065e-12$   & $*$      &$10$ & $10$  & $7$    & $3.0122e-5$ & $*$      \\
         & $20$  & $20$    & $2800$   & $2.0419e-12$   & $2.0243$ &$20$ & $20$  & $28$   & $1.0200e-5$ & $1.5622$ \\
  $r=1/7$& $40$  & $40$    & $11200$  & $5.0834e-13$   & $2.0061$ &$40$ & $40$  & $112$  & $2.7123e-6$ & $1.9110$ \\
         & $80$  & $80$    & $44800$  & $1.2710e-13$   & $1.9998$ &$80$ & $80$  & $448$  & $6.8987e-7$ & $1.9751$ \\
         & $160$ & $160$   & $179200$ & $3.1782e-14$   & $1.9997$ &$160$& $160$ & $1792$ & $1.7308e-7$ & $1.9949$ \\
 \hline
         & $10$ & $10$    & $600$    & $1.2581e-13$    & $*$      & $10$ & $10$    & $6$   & $2.0126e-5$   & $*$\\
         & $20$ & $20$    & $2400$   & $7.8107e-15$    & $\textbf{4.0096}$ & $20$ & $20$    & $24$   & $1.2675e-6$   & $\textbf{3.9890}$ \\
$r=\textbf{1/6}$& $40$ & $40$& $9600$& $4.8736e-16$    & $\textbf{4.0024}$ & $40$ & $40$& $96$  & $7.8155e-8$   & $\textbf{4.0195}$ \\
         & $80$ & $80$    & $38400$  & $3.0483e-17$    & $\textbf{3.9989}$ & $80$ & $80$    & $384$  & $4.8675e-9$   & $\textbf{4.0051}$ \\
         & $160$ & $160$  & $153600$ & $1.9059e-18$    & $\textbf{3.9994}$ & $160$ & $160$  & $1536$ & $3.0437e-10$  & $\textbf{3.9993}$ \\
 \hline
         & $10$  & $10$   & $500$     & $1.1283e-11$  & $*$      & $10$  & $10$   & $5$      & $9.2303e-5$  &  $*$\\
         & $20$  & $20$   & $2000$    & $2.8372e-12$  & $1.9916$ & $20$  & $20$   & $20$     & $1.7206e-5$  &  $2.4235$ \\
  $r=1/5$& $40$  & $40$   & $8000$    & $7.1034e-13$  & $1.9979$ & $40$  & $40$   & $80$     & $3.9883e-6$  &  $2.1091$ \\
         & $80$  & $80$   & $32000$   & $1.7785e-13$  & $1.9978$ & $80$  & $80$   & $320$    & $9.7742e-7$  &  $2.0287$ \\
         & $160$ & $160$  & $128000$  & $4.4489e-14$  & $1.9992$ & $160$ & $160$  & $1280$   & $2.4304e-7$  &  $2.0078$ \\
  \hline
\end{tabular}\label{tab7a}
}
\end{table}
\vspace{-10mm}
\begin{table}[H]
\caption{The errors in $L^{\infty}$-norm versus grid sizes reduction and convergence orders of the corrected
difference scheme \eqref{eqn4.3} for the viscous Burgers' equation under the \textbf{Neumann} boundary in Example \ref{Exam5}}
\vspace{-3mm}
\centering
\setlength\tabcolsep{2.1mm}{
\begin{tabular}{|c|ccccc|ccccc|}
\hline
\multicolumn{0}{|c|}{ }&\multicolumn{5}{c|}{\textbf{Case~I}}&\multicolumn{5}{c|}{\textbf{Case~II}}\\
         \cline{2-6}\cline{7-11}\diagbox{R}{P}
         & $m_1$  & $m_2$ & $n$    & ${E}_{\infty}(h_x,h_y,\tau)$& ${\rm Ord_G}$  & $m_1$  & $m_2$ & $n$  & ${E}_{\infty}(h_x,h_y,\tau)$& ${\rm Ord_G}$  \\
         \cline{1-6}\cline{7-11}
         & $10$  & $10$    & $700$    & $1.3885e-7$   & $*$      &$40$  & $40$  & $112$   & $2.7129e-6$ & $*$      \\
         & $20$  & $20$    & $2800$   & $1.2689e-7$   & $0.1300$ &$80$  & $80$  & $448$   & $7.2243e-7$ & $1.9089$ \\
  $r=1/7$& $40$  & $40$    & $11200$  & $3.9364e-8$   & $1.6886$ &$160$ & $160$ & $1792$  & $1.9385e-7$ & $1.8979$ \\
         & $80$  & $80$    & $44800$  & $1.0255e-8$   & $1.9406$ &$320$ & $320$ & $7168$  & $4.9489e-8$ & $1.9698$ \\
         & $160$ & $160$   & $179200$ & $2.5871e-9$   & $1.9869$ &$640$ & $640$ & $28672$ & $1.2443e-8$ & $1.9918$ \\
 \hline
         & $10$ & $10$    & $600$    & $7.1556e-7$    & $*$      & $40$ & $40$    & $96$   & $1.5489e-6$   & $*$\\
         & $20$ & $20$    & $2400$   & $4.0472e-8$    & $\textbf{4.1441}$ & $80$ & $80$    & $384$   & $9.9905e-8$   & $\textbf{3.9546}$ \\
$r=\textbf{1/6}$& $40$ & $40$& $9600$& $2.2378e-9$    & $\textbf{4.1768}$ & $160$ & $160$& $1536$  & $6.2942e-9$   & $\textbf{3.9885}$ \\
         & $80$ & $80$    & $38400$  & $1.2872e-10$   & $\textbf{4.1198}$ & $320$ & $320$    & $6144$  & $3.9461e-10$   & $\textbf{3.9955}$ \\
         & $160$ & $160$  & $153600$ & $7.6769e-12$   & $\textbf{4.0676}$ & $640$ & $640$  & $24576$ & $2.4688e-11$  & $\textbf{3.9985}$ \\
 \hline
         & $10$  & $10$   & $500$     & $1.6401e-6$  & $*$      & $40$  & $40$   & $80$      & $5.9410e-6$  &  $*$\\
         & $20$  & $20$   & $2000$    & $2.7321e-7$  & $2.5857$ & $80$  & $80$   & $320$     & $1.2168e-6$  &  $2.2877$ \\
  $r=1/5$& $40$  & $40$   & $8000$    & $6.0382e-8$  & $2.1778$ & $160$  & $160$   & $1280$     & $2.8568e-7$  &  $2.0906$ \\
         & $80$  & $80$   & $32000$   & $1.4660e-8$  & $2.0423$ & $320$  & $320$   & $5120$    & $7.0216e-8$  &  $2.0245$ \\
         & $160$ & $160$  & $128000$  & $3.6400e-9$  & $2.0098$ & $640$ & $640$  & $20480$   & $1.7479e-8$  &  $2.0062$ \\
  \hline
\end{tabular}\label{tab7b}
}
\end{table}
\end{example}
\vspace{-8mm}
\begin{example}\label{exam6}
  We then consider the solution behavior to the nonlinear problem {\rm \cite{KR1997}} as
  \begin{align*}
u_t+f(u)_x + g(u)_y = \varepsilon\Delta u, \quad \mathbf{x}\in \Omega, \; t>0.
\end{align*}
\begin{itemize}
  \item []{\rm \textbf{Case I}}.
   The initial data is given by
  \begin{equation*}
    u_0(\mathbf{x}) = \left\{
    \begin{array}{ll}
      1,& {\rm for\;} (x-0.25)^2 + (y-2.25)^2 < 0.5,\\
      0,& {\rm otherwise}
    \end{array}\right.
  \end{equation*}
  and the fluxes given by
  \begin{equation*}
  \left\{
  \begin{array}{l}
  \displaystyle  g(u) = (u-0.25)^3,\\
  \displaystyle  f(u) = u+u^2.
    \end{array}\right.
  \end{equation*}
  \item []{\rm \textbf{Case II}}.
  The initial data is given by
  \begin{equation*}
    u_0(\mathbf{x}) = \left\{
    \begin{array}{ll}
      1,& {\rm for\;} x^2 + y^2 < 0.5,\\
      0,& {\rm otherwise}
    \end{array}\right.
  \end{equation*}
  and the fluxes given by
  \begin{equation*}
  \left\{
  \begin{array}{l}
  \displaystyle  g(u) = \frac{u^2}{u^2+(1-u)^2},\\
  \displaystyle  f(u) = g(u)(1-5(1-u)^2).
    \end{array}\right.
  \end{equation*}
\end{itemize}

In {\rm \textbf{Case I}}, the boundary conditions are set to be zeros to keep the consistency and the diffusion coefficient is taken as $\varepsilon=0.1$. The temporal step size is $\tau = 0.001$ and the simulation domain is on $\Omega = [-6, 2]\times[0,8]$, which could contain a complete evolution surface. The numerical surfaces and corresponding contours are displayed in Figure \ref{fig4} for different terminal time with the optimal step-ratio $r=1/6$. We see clearly that the numerical solutions are diffusive and move from the bottom right to the upper left.
\begin{figure}[htbp]
\centering
 \subfigure[$t=0$]{\centering
\includegraphics[width=0.26\textwidth]{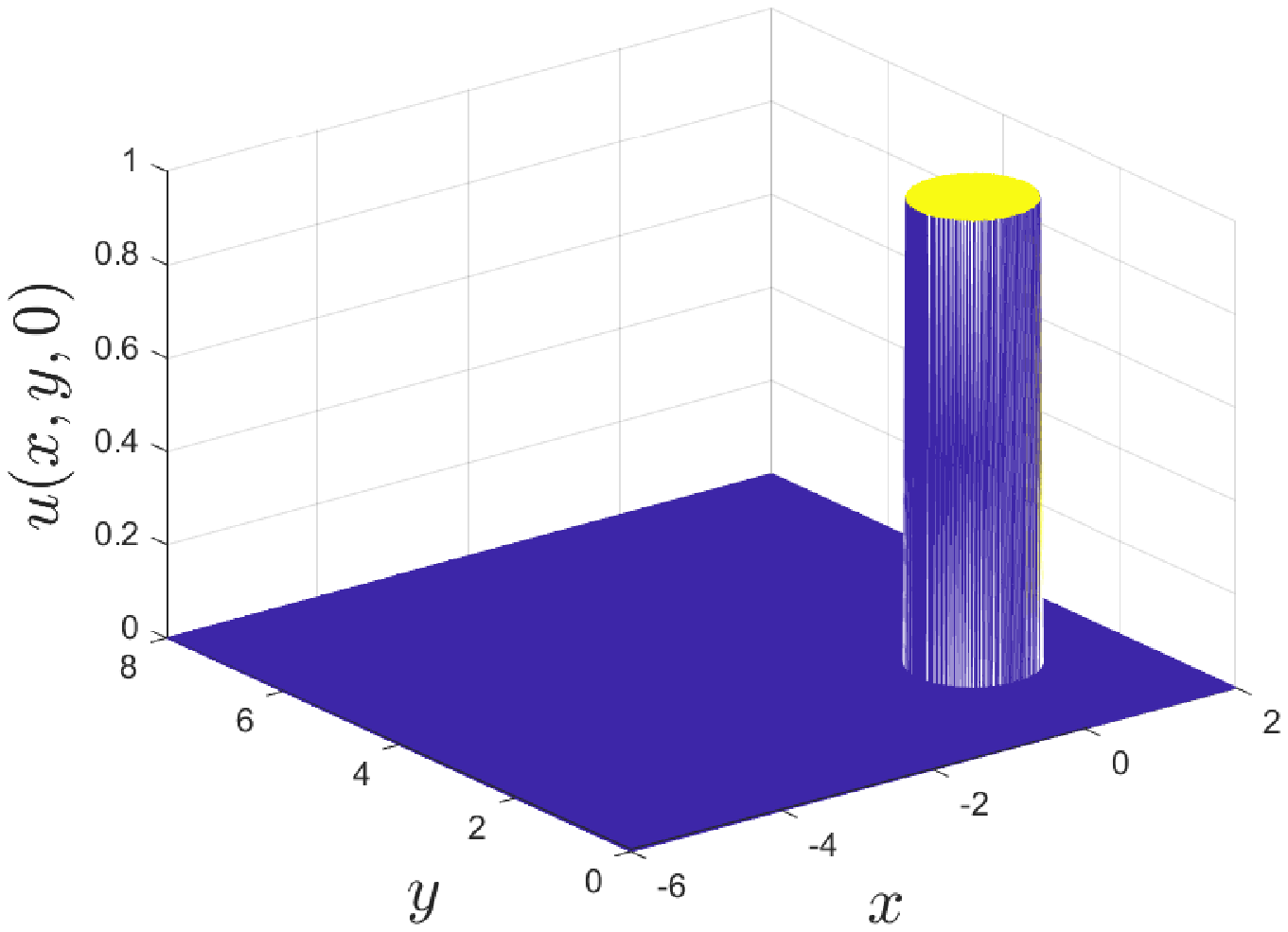}
}\hspace{-4mm}\subfigure[$t=0.5$]{\centering
\includegraphics[width=0.26\textwidth]{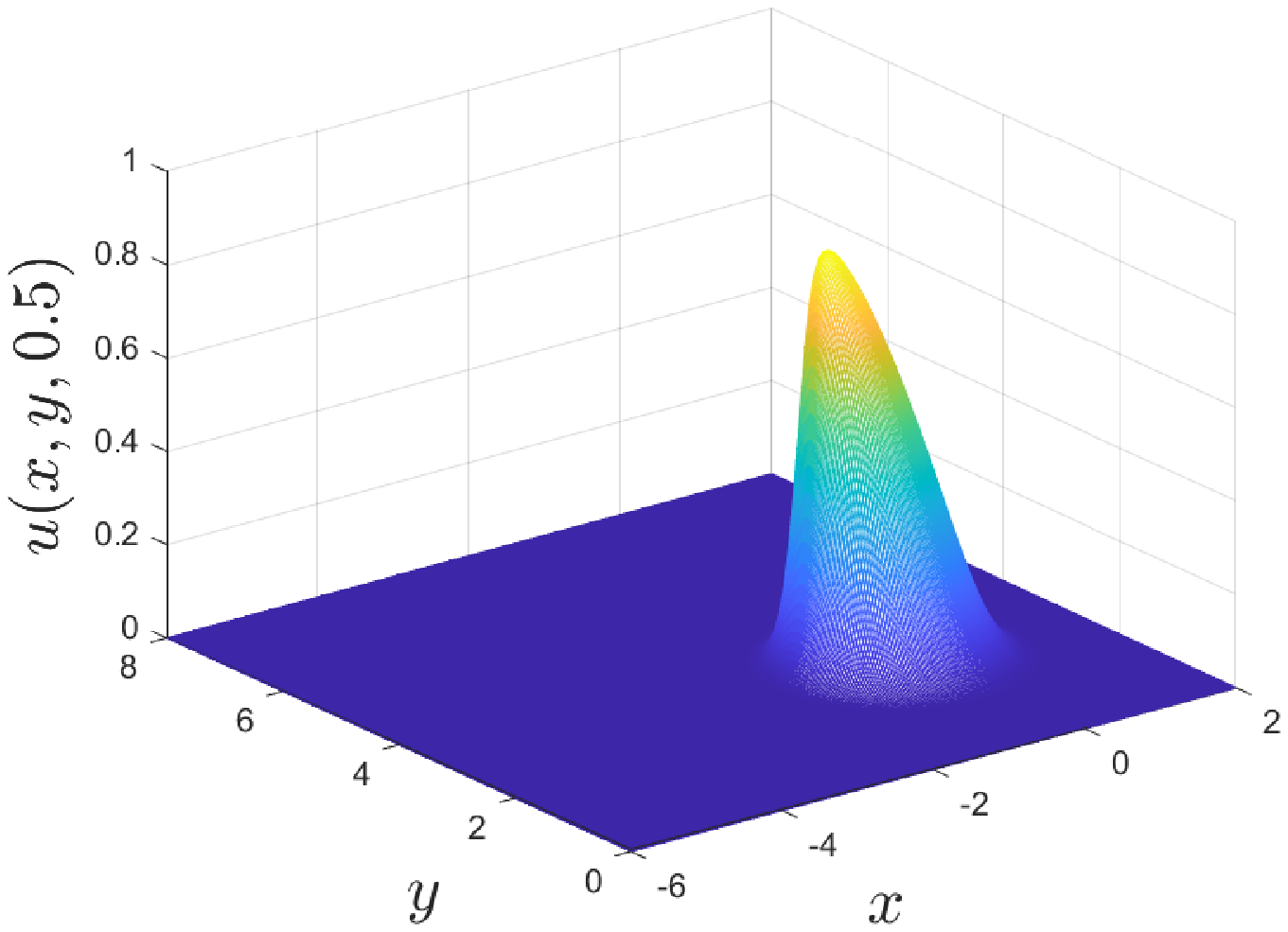}
}\hspace{-4mm}\subfigure[$t=1.5$]{\centering
\includegraphics[width=0.26\textwidth]{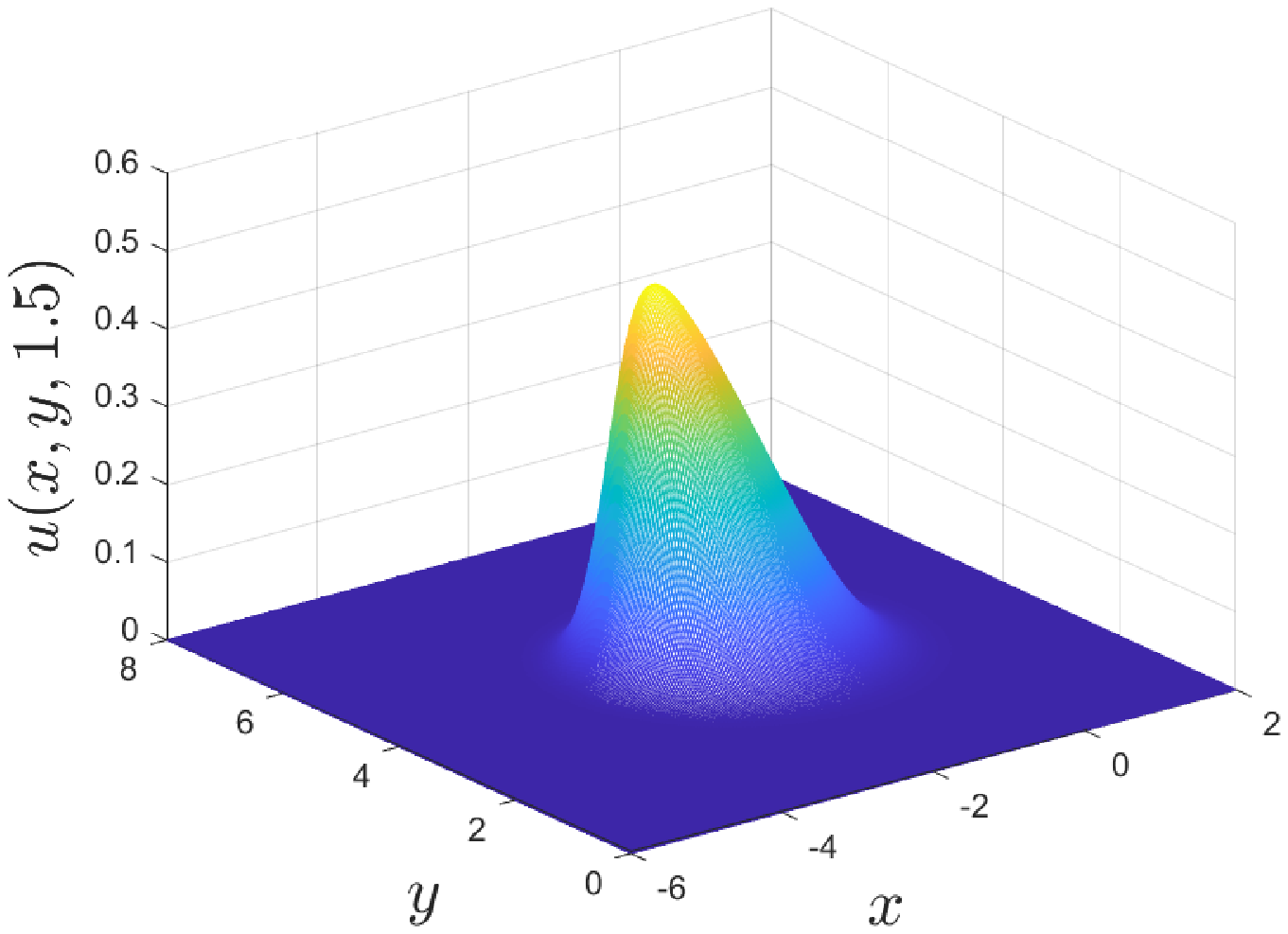}
}\hspace{-4mm}\subfigure[$t=3$]{\centering
\includegraphics[width=0.26\textwidth]{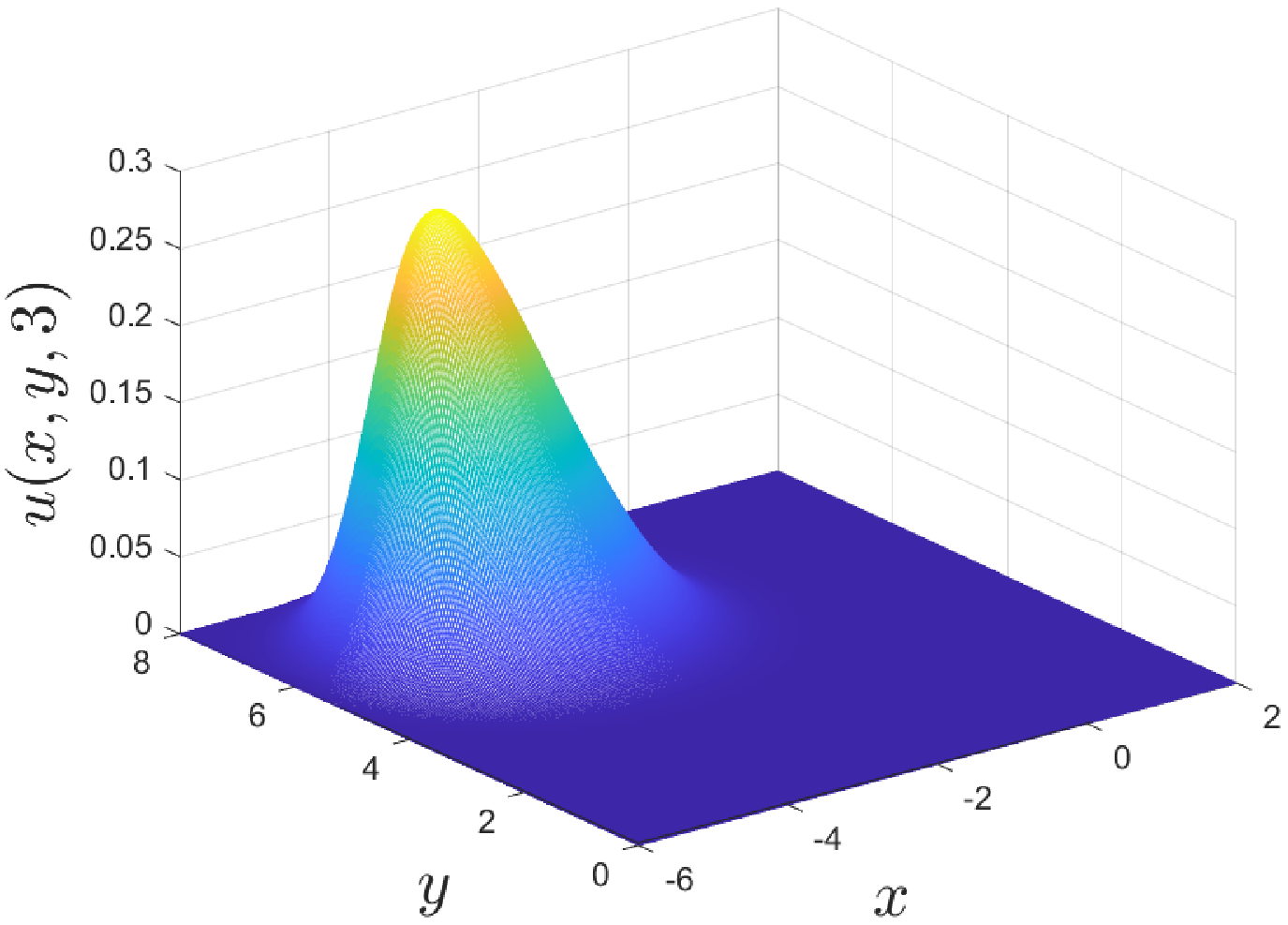}
}\hspace{-4mm}\\
 \subfigure[$t=0$]{\centering
\includegraphics[width=0.26\textwidth]{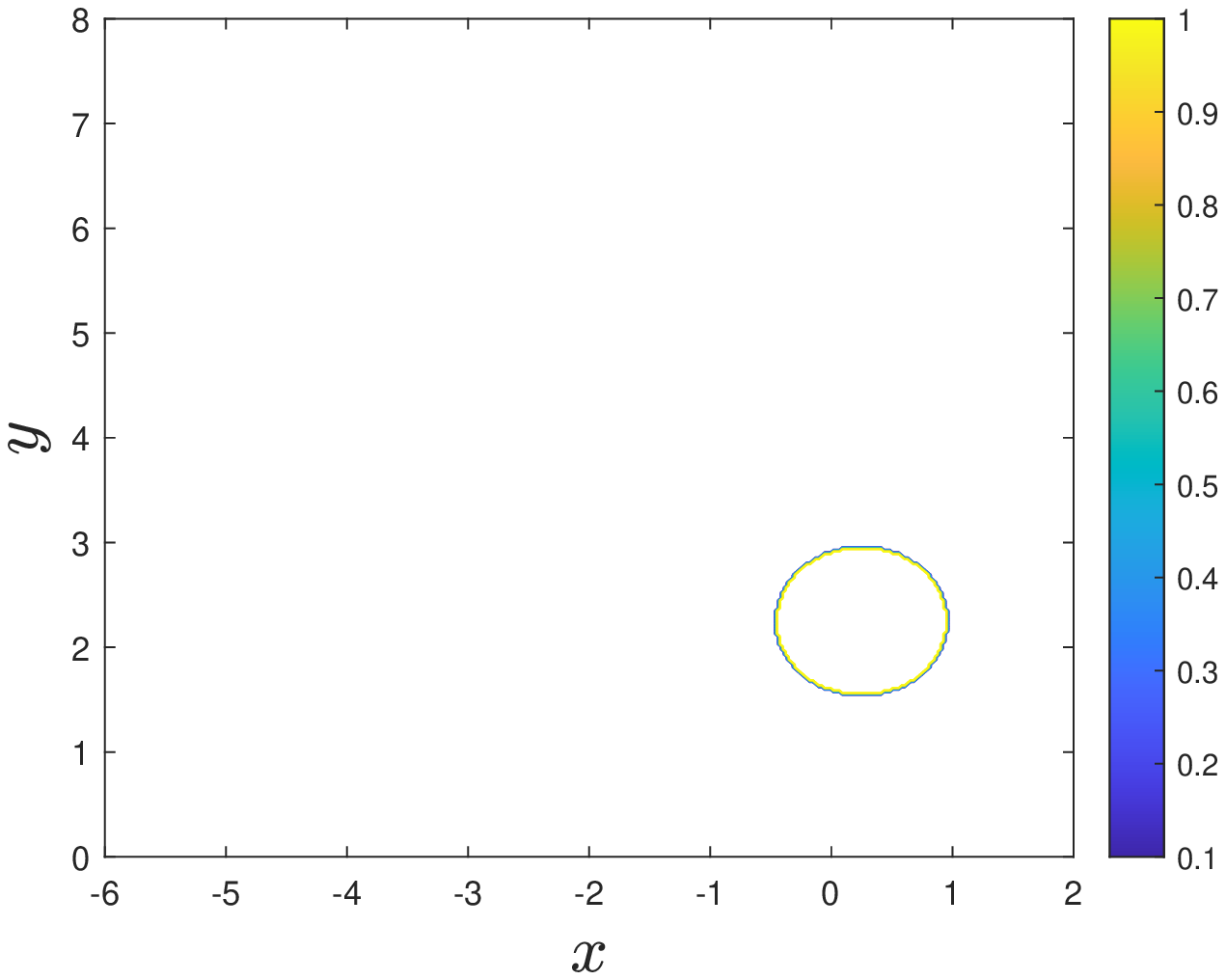}
}\hspace{-4mm}\subfigure[$t=0.5$]{\centering
\includegraphics[width=0.26\textwidth]{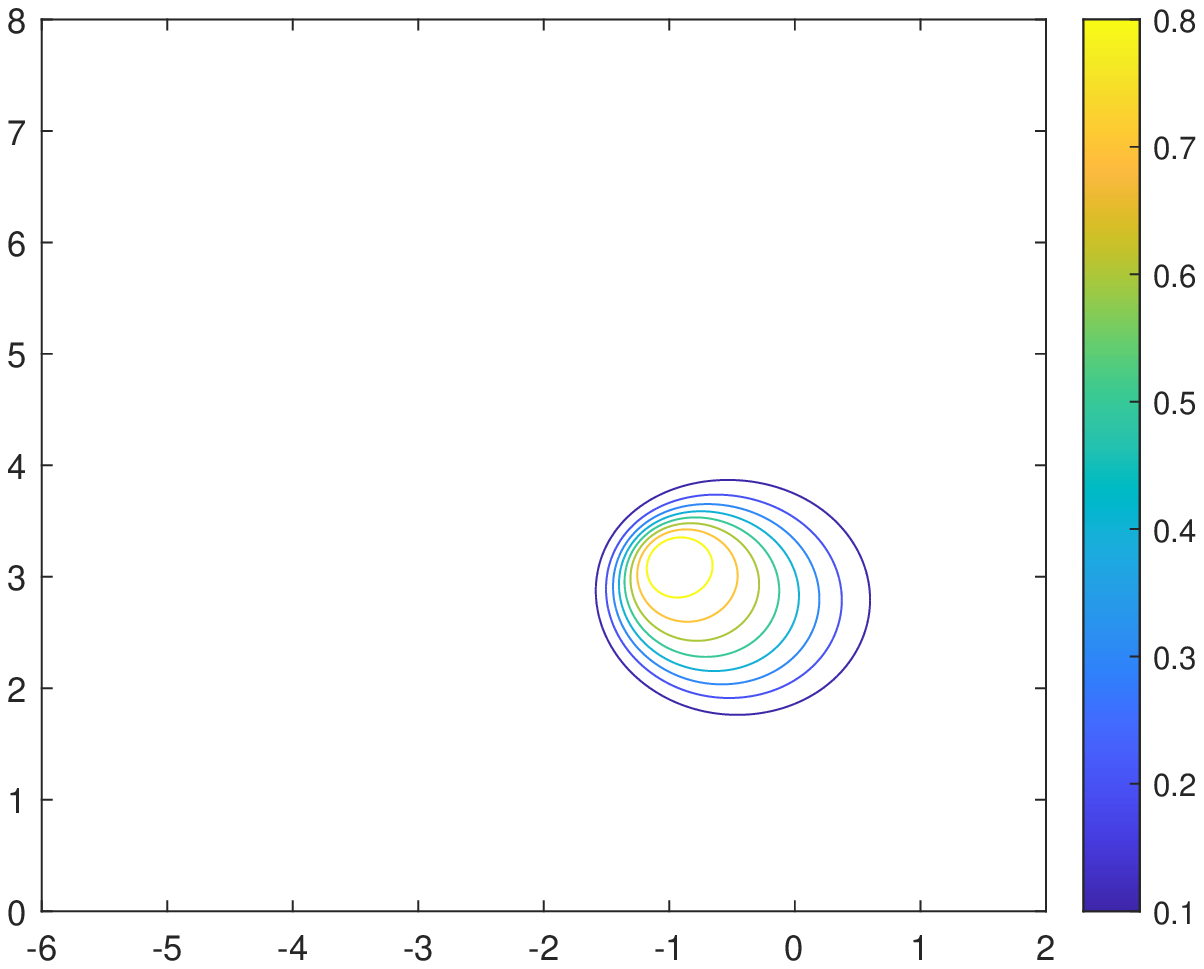}
}\hspace{-4mm}\subfigure[$t=1.5$]{\centering
\includegraphics[width=0.26\textwidth]{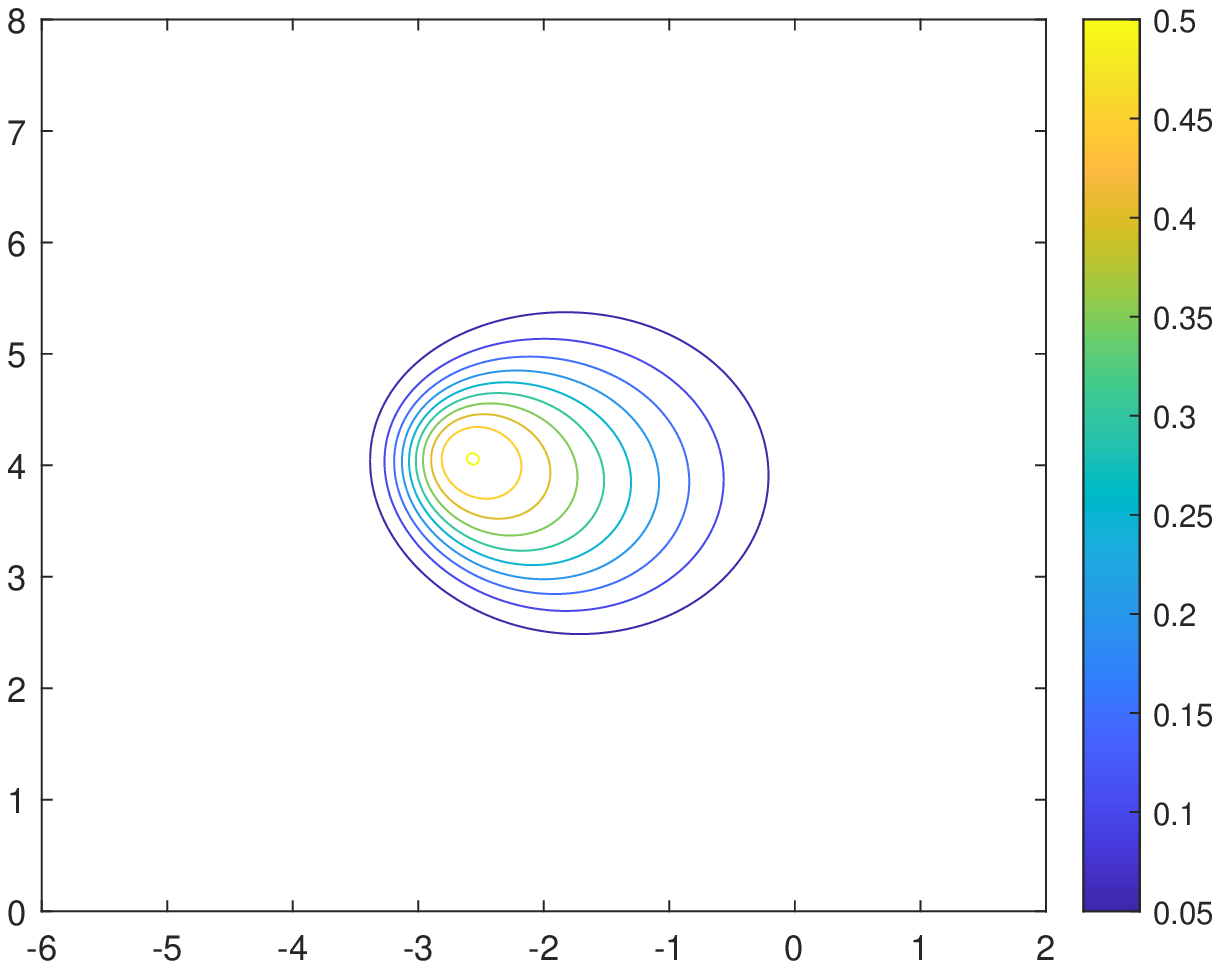}
}\hspace{-4mm}\subfigure[$t=3$]{\centering
\includegraphics[width=0.26\textwidth]{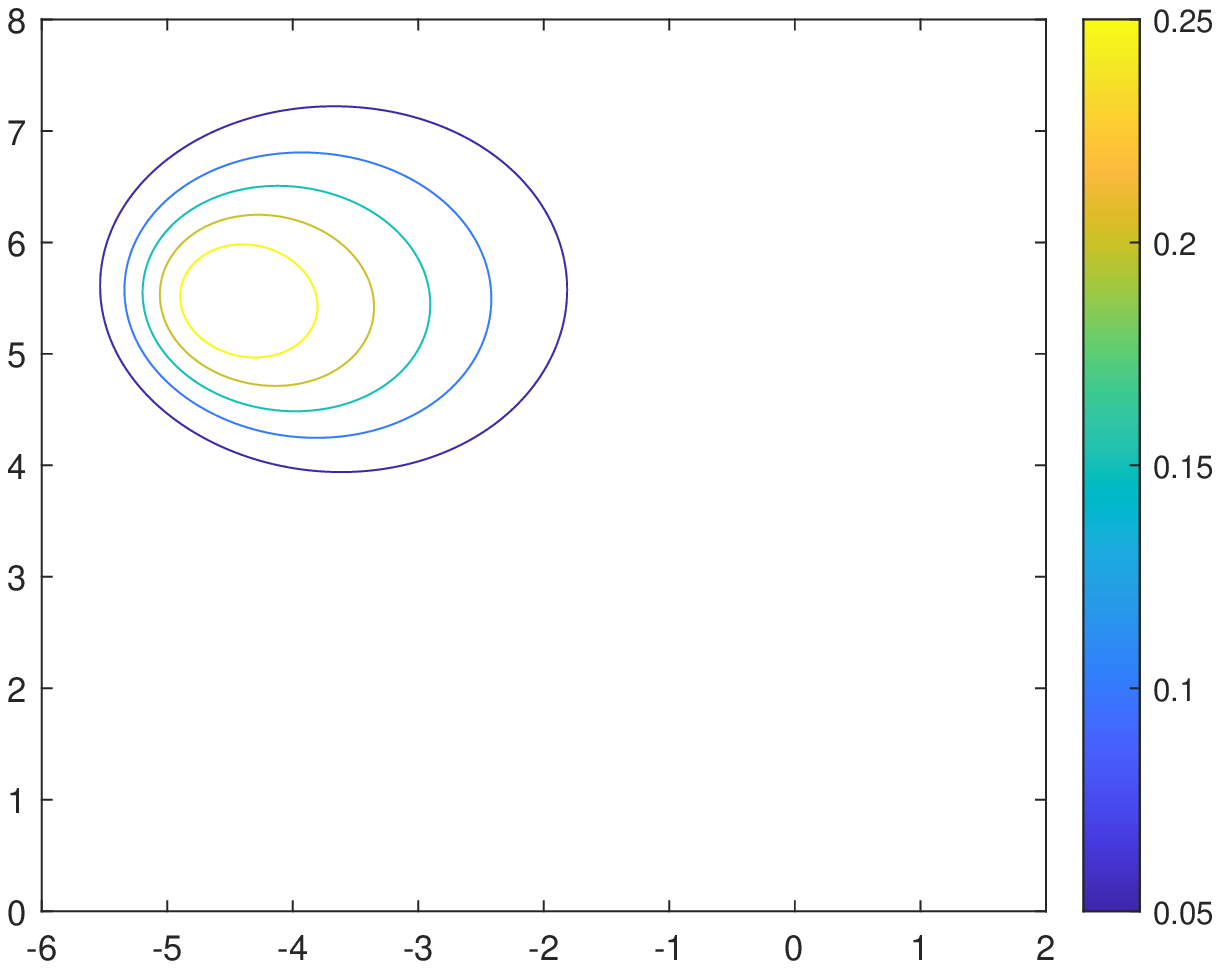}
}
\caption{Numerical evolution surfaces (a)--(d) and the corresponding contours (e)--(h).
(a) and (e): $t=0$, $m_1=m_2 = 327, n = 500$; (b) and (f): $t=0.5$, $m_1=m_2 = 327, n = 500$;
(c) and (g): $t=1.5$, $m_1=m_2 = 327, n = 1500$; (d) and (h): $t=3$, $m_1=m_2 = 327, n = 3000$.} \label{fig4}
\end{figure}
\begin{figure}[htbp]
\centering
 \subfigure[$t=0$]{\centering
\includegraphics[width=0.26\textwidth]{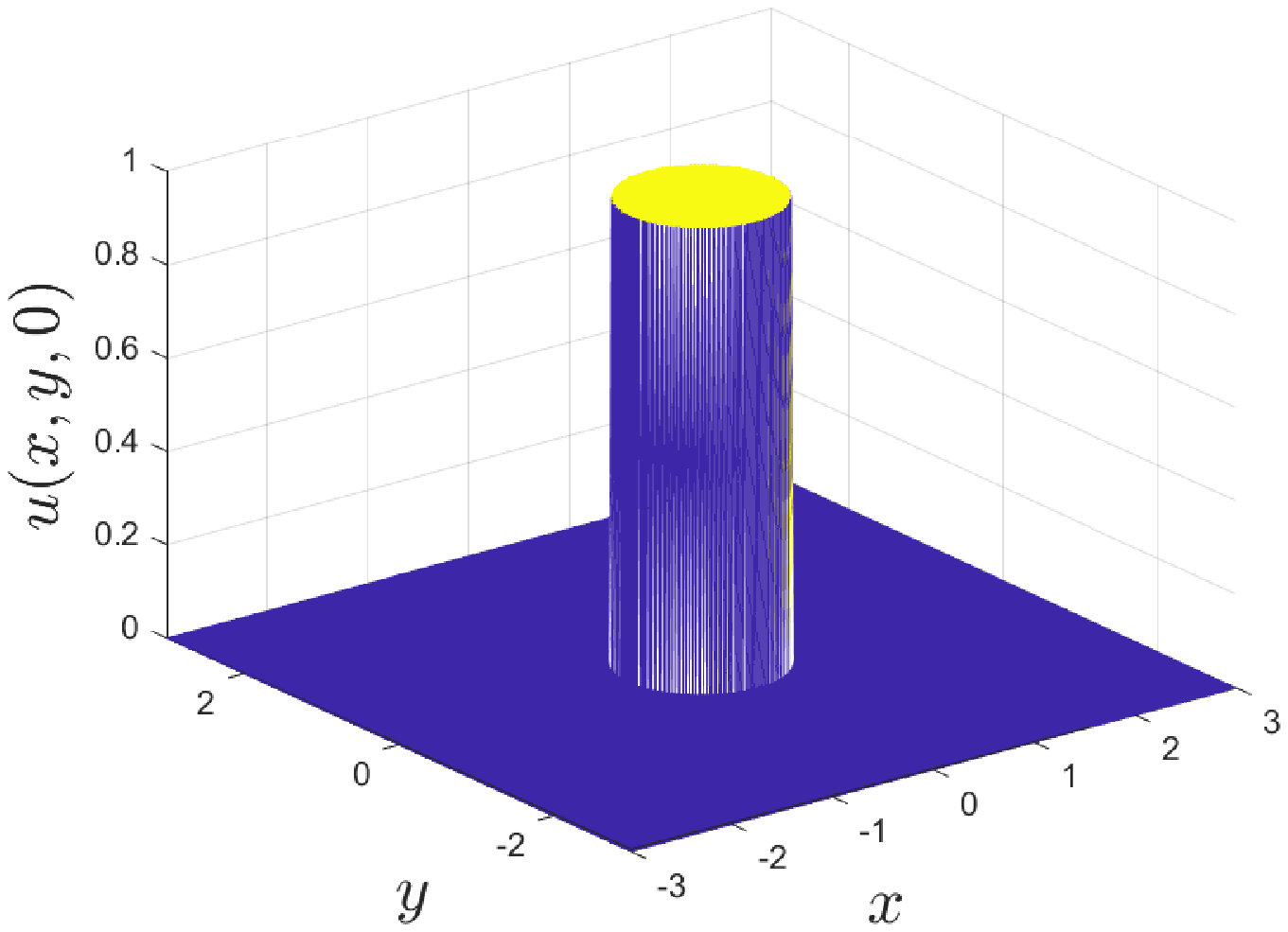}
}\hspace{-4mm}\subfigure[$t=0.5$]{\centering
\includegraphics[width=0.26\textwidth]{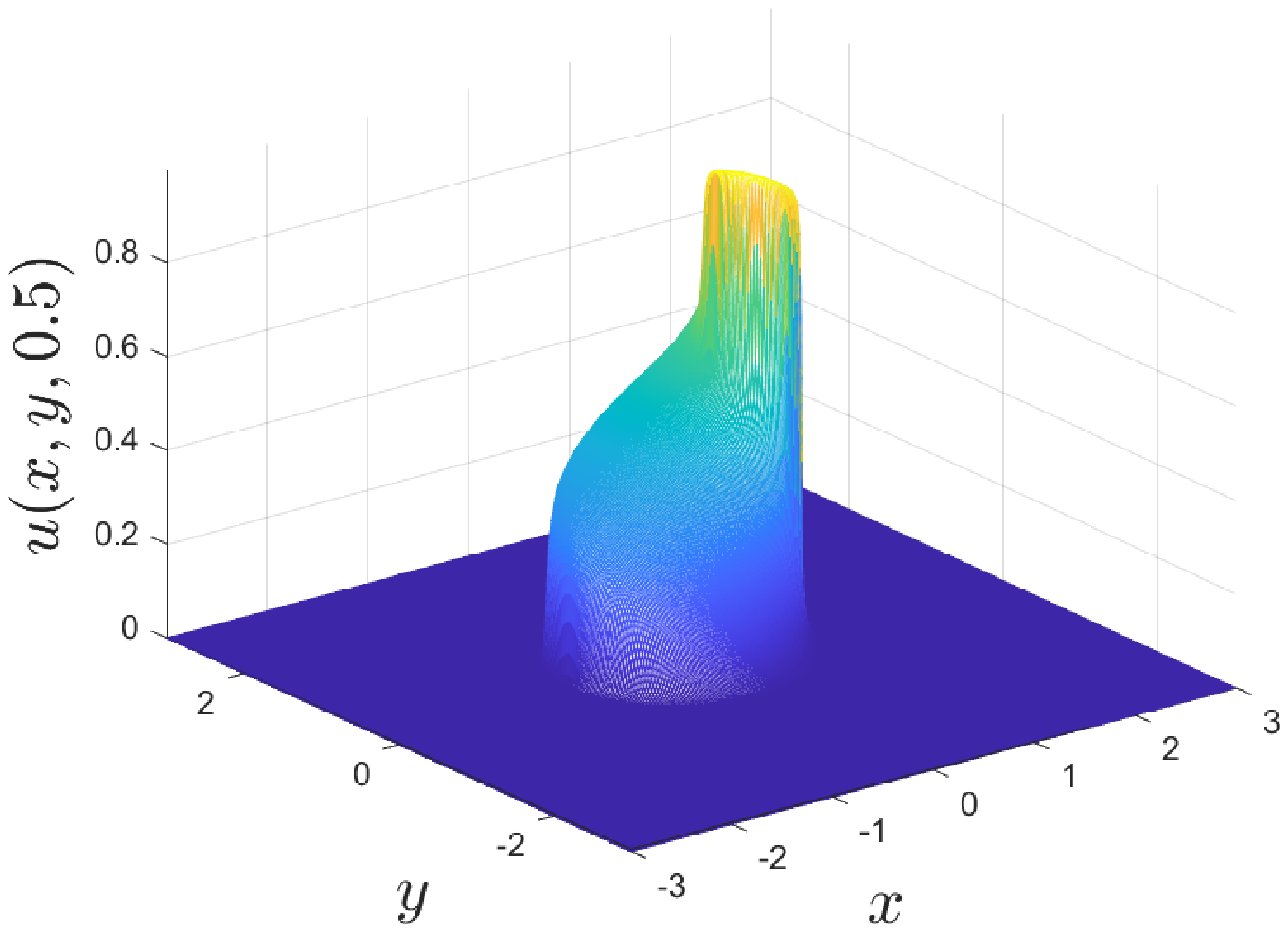}
}\hspace{-4mm}\subfigure[$t=1.5$]{\centering
\includegraphics[width=0.26\textwidth]{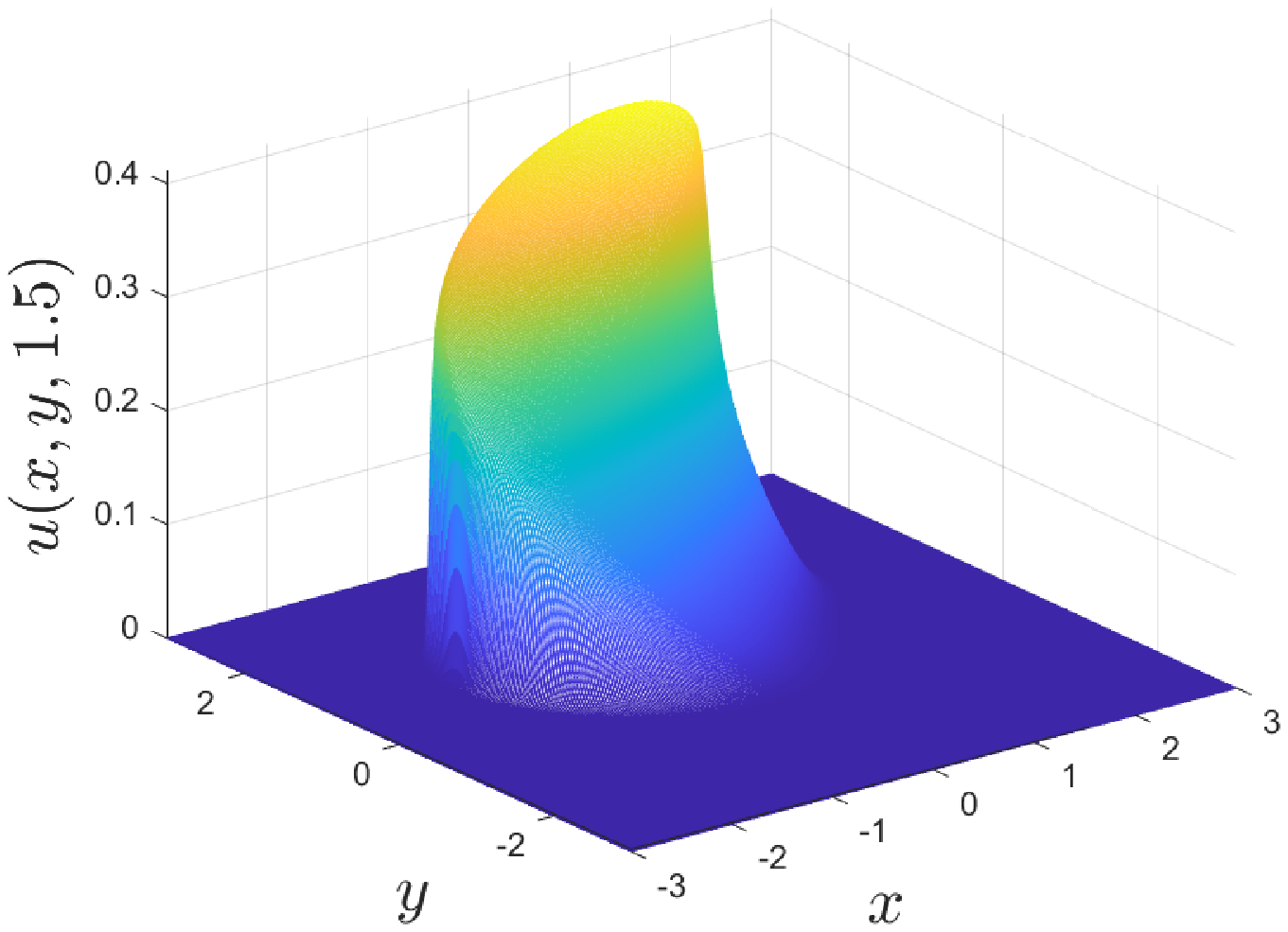}
}\hspace{-4mm}\subfigure[$t=3$]{\centering
\includegraphics[width=0.26\textwidth]{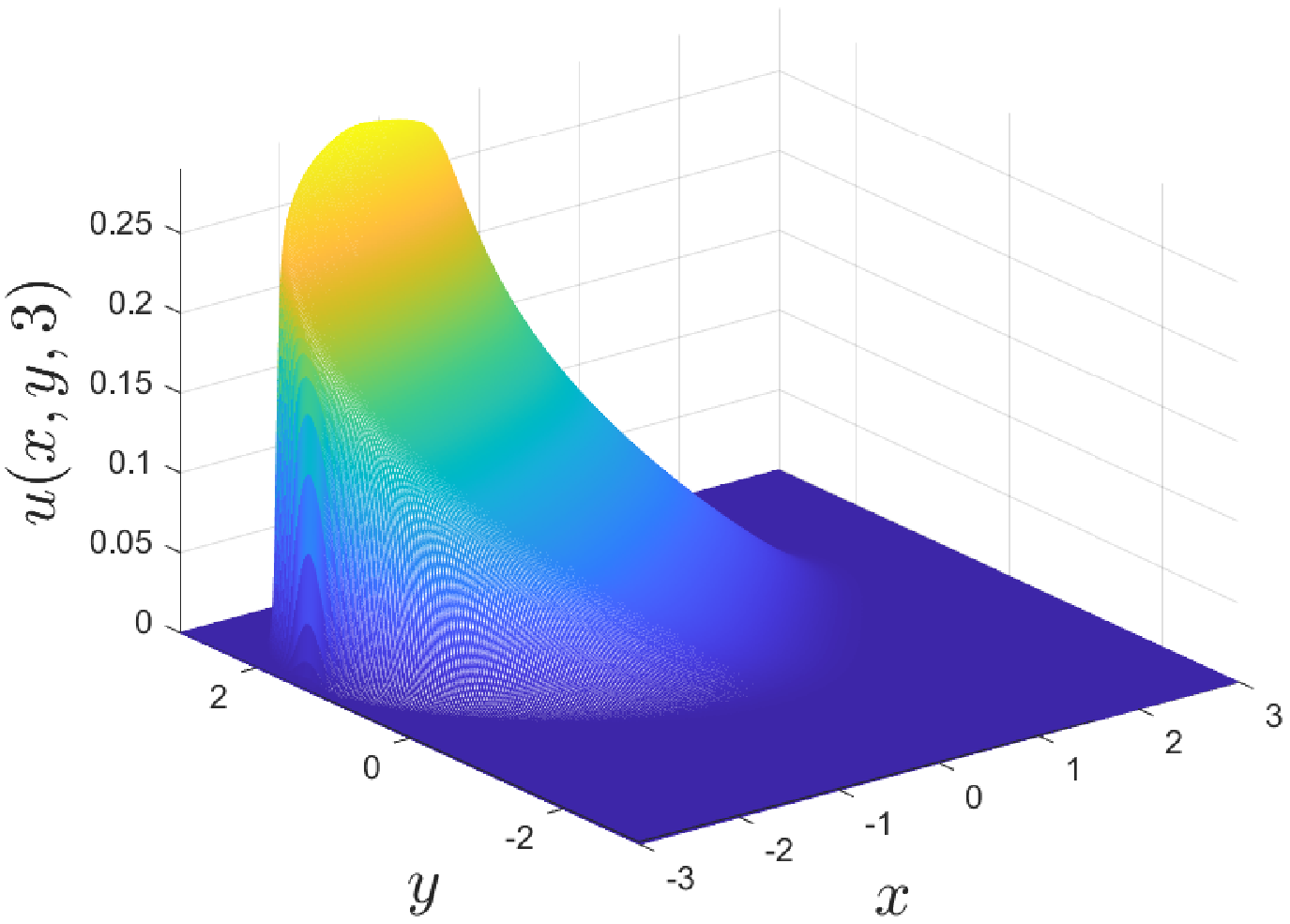}
}\hspace{-4mm}\\
\subfigure[$t=0$]{\centering
\includegraphics[width=0.26\textwidth]{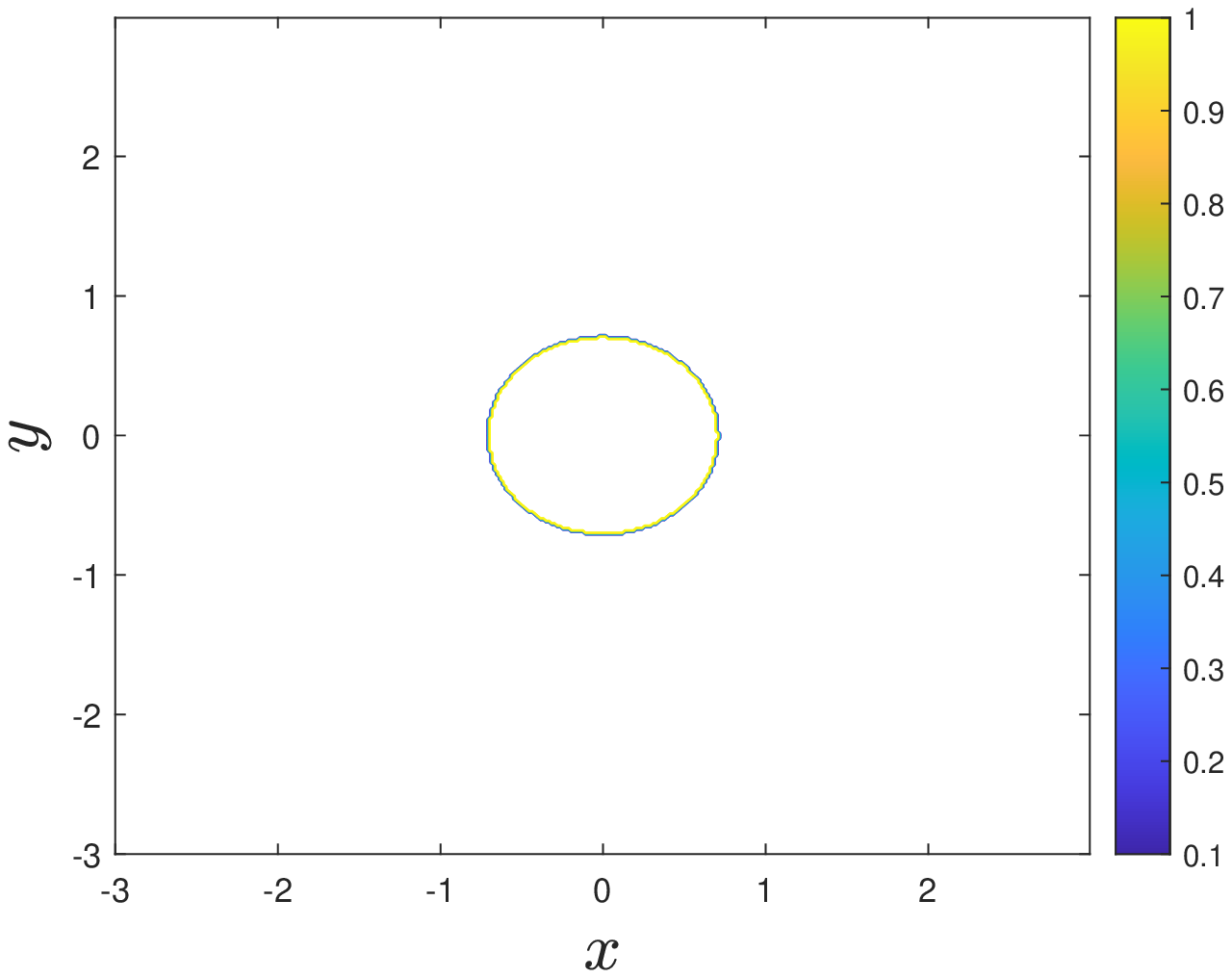}
}\hspace{-4mm}\subfigure[$t=0.5$]{\centering
\includegraphics[width=0.26\textwidth]{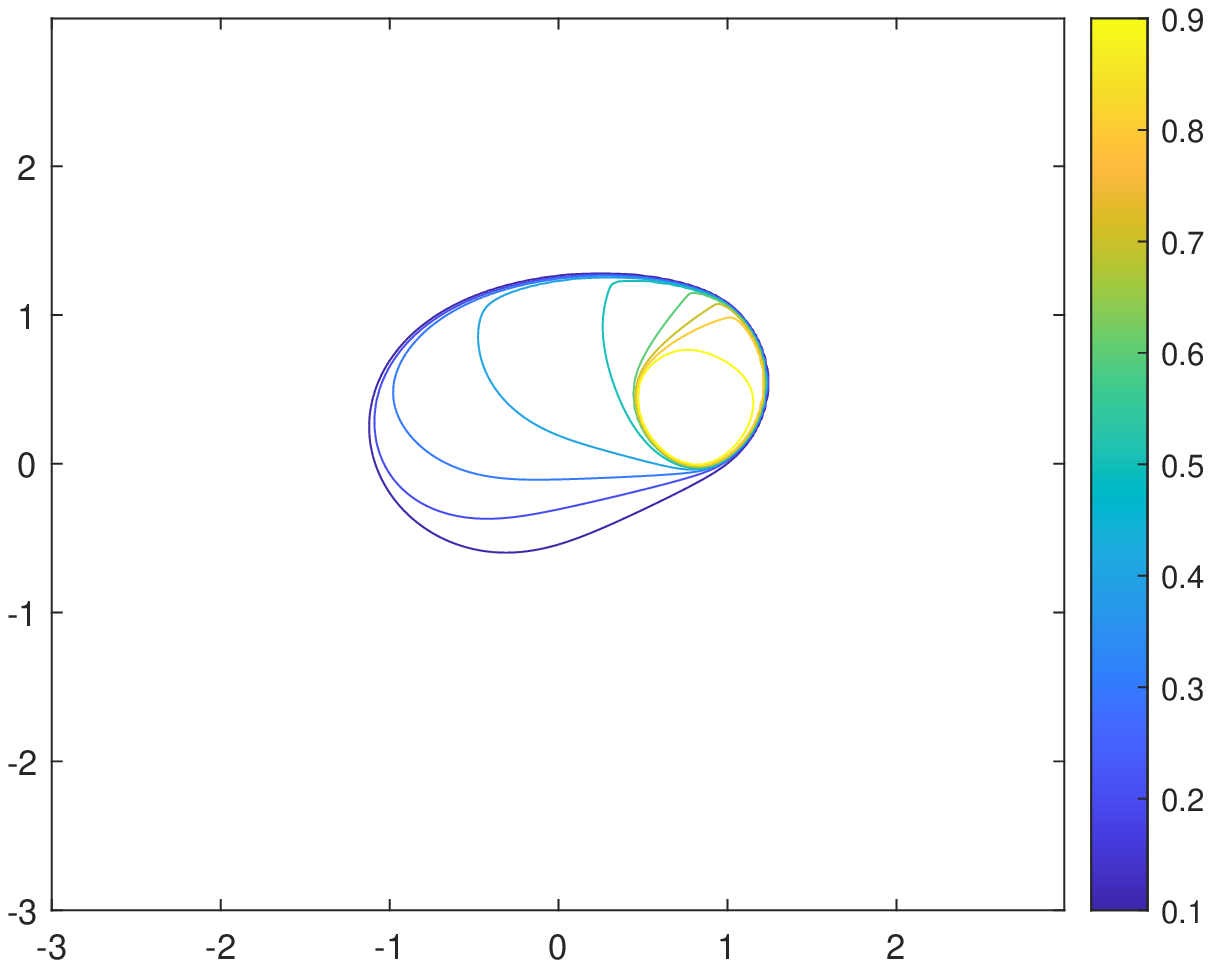}
}\hspace{-4mm}\subfigure[$t=1.5$]{\centering
\includegraphics[width=0.26\textwidth]{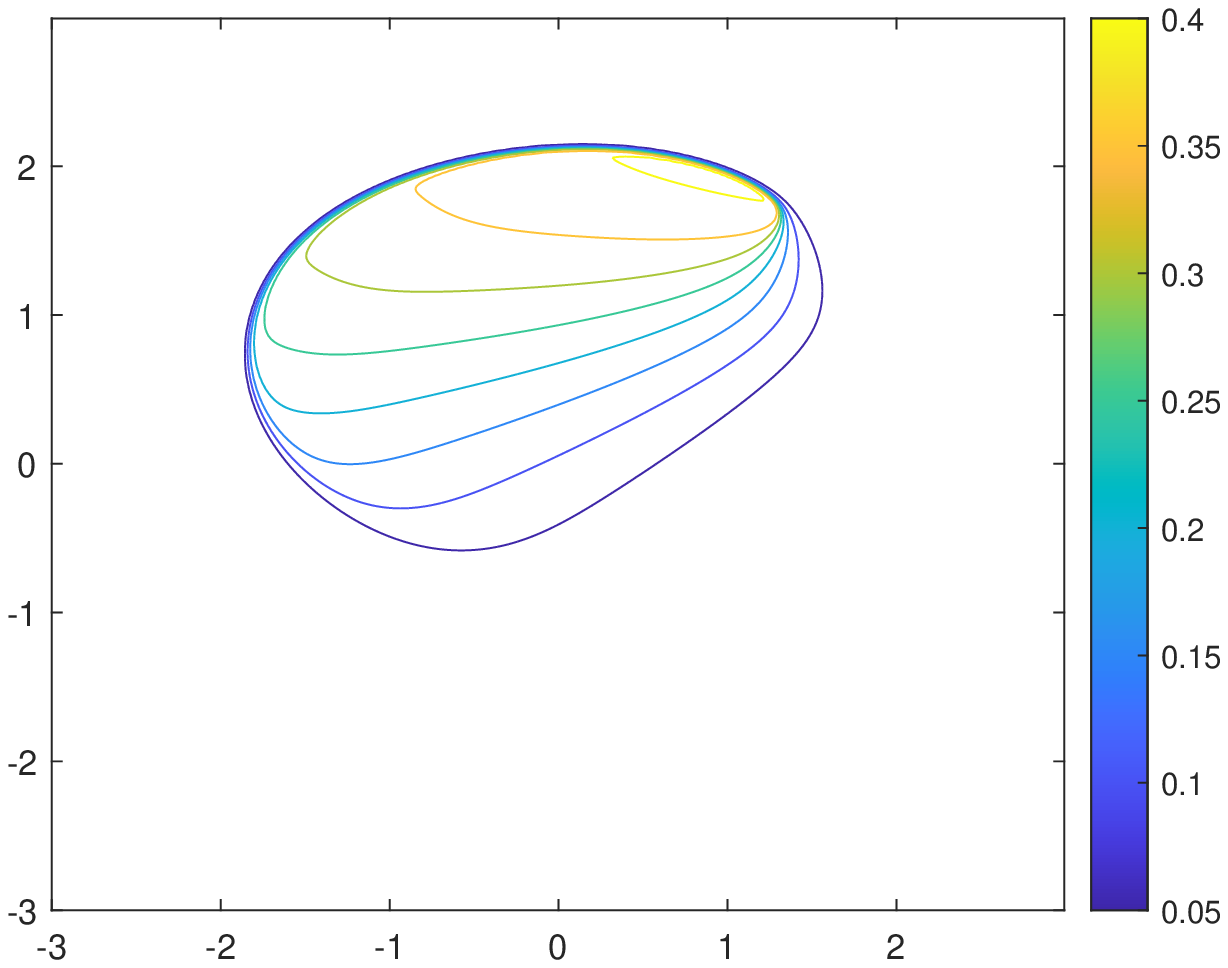}
}\hspace{-4mm}\subfigure[$t=3$]{\centering
\includegraphics[width=0.26\textwidth]{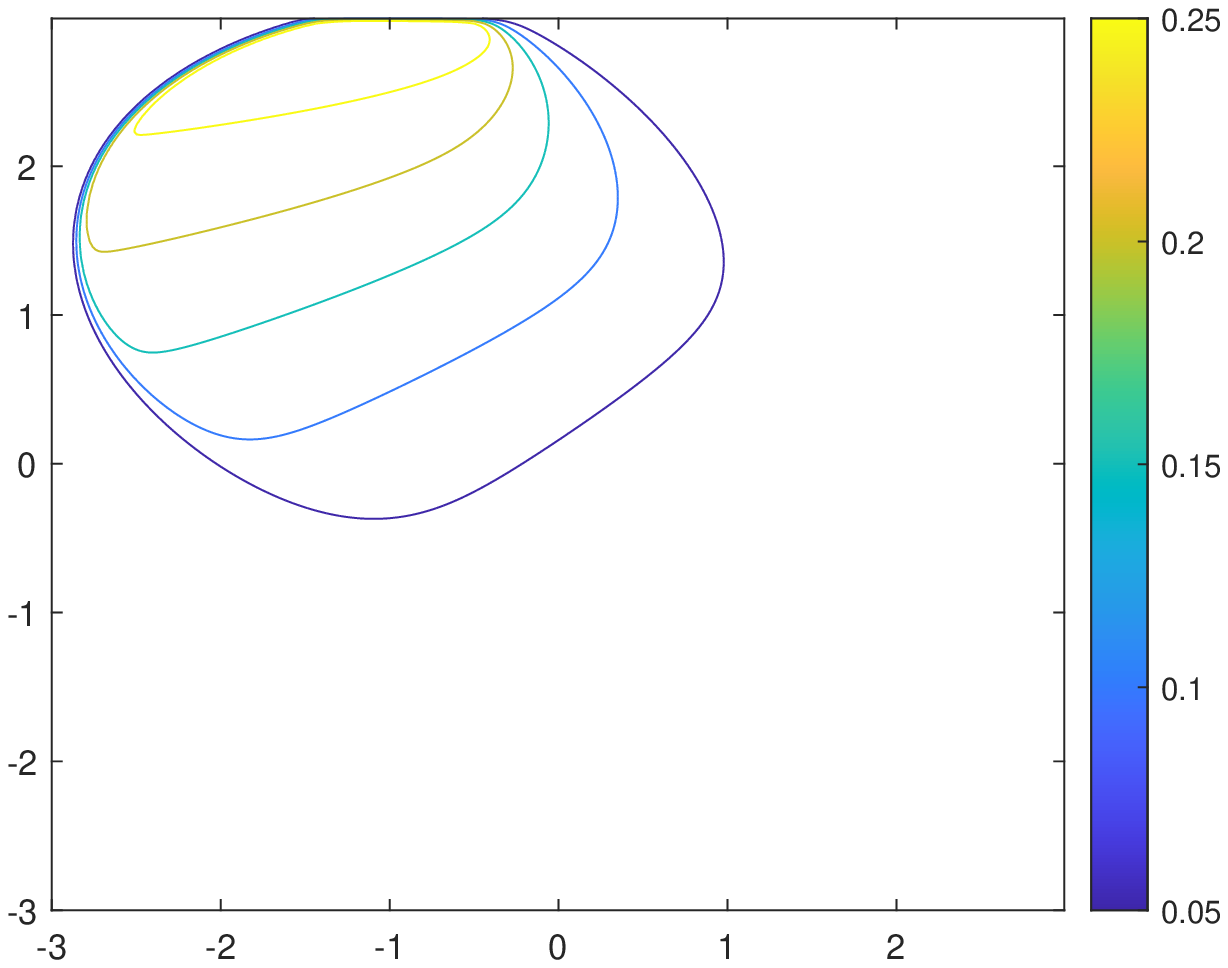}
}
\caption{Numerical evolution surfaces (a)--(d) and contours (e)-(h). Here the parameters are taken as
(a) and (e): $t=0$, $m_1=m_2=3464$, $n=10000$; (b) and (f): $t=0.5$, $m_1=m_2=3464$, $n=10000$; (c) and (g): $t=1.5$, $m_1=m_2=3464$, $n=30000$; (d) and (h): $t=3$, $m_1=m_2=3464$, $n=60000$.} \label{fig5}
\end{figure}

In {\rm \textbf{Case II}}, we describe a problem motivated from two-phase flow in porous
media with a gravitation pull in the $x$-direction. The flux functions $f(u)$ and $g(u)$ are ``{\rm \textbf{S}}-{\rm shape}'' with $f(0)=g(0)=0$ and $f(1)=g(1)=1$. Boundary value conditions are again put equal to zero. The diffusion coefficient is $\varepsilon = 0.01$ and the nonlinearity is very strong. We calculate the problem on the domain $\Omega = [-3,3]\times[-3,3]$ with a temporal step-size $\tau=0.00005$. The numerical results are displayed in Figure \ref{fig5}. In order to demonstrate the numerical surfaces easily, the surfaces are drawn every ten lines to reduce image storage. We observe that the peak decreases gradually from the center to the upper left corner, which is consistent with the result in the reference {\rm \cite{KR1997}}.
\end{example}

\subsection{Three-dimensional case}
\begin{example}\label{exam7}
Finally, we consider a three-dimensional diffusion problem \eqref{3D_diffu1} with the parameters $L_1=L_2=L_3=0$, $R_1=R_2=R_3=1$ and $T=1$. The initial and boundary value conditions and the source term $f(\mathbf{x},t)$ are determined by the exact solution $u(\mathbf{x},t)=\exp(1/2(x+y+z)-t)$.

Two sets of coefficients including 
\begin{itemize}
  \item [] {\rm \textbf{Case~I:}} (isotropic) $a=b=c=1$;
  \item [] {\rm \textbf{Case~II:}} (anisotropic) $a=1,\ b = 0.01,\ c=0.04$
\end{itemize}
are utilized to test the convergence rate and {\rm \textbf{CFL}} condition for the corrected difference scheme and the classical difference scheme, respectively. Numerical results are listed in Tables \ref{tab8a} and \ref{tab8b}.

We clearly see that the corrected difference scheme is fourth-order convergent when the step-ratio $r = 1/6$ and second-order convergent in other cases.
When the step-ratio arrives at the critical value $r=1/4$ of {\rm \textbf{CFL}} condition, the numerical results still have second-order convergence. Once the step-ratio $r>1/4$, the numerical results will irreversibly deviate from the exact solution and the numerical error tends to blow up. On the other hand, for the classical difference scheme, we have tested two sets of data with $r=1/6$ and $r = 1/5.99$. We see that it is second-order convergence without superconvergence when $r=1/6$ and not stable when $r = 1/5.99$. In short, all the data in Tables \ref{tab8a} and \ref{tab8b} are consistent with Theorem \ref{thm2} and the restrictive condition \eqref{3d_CFL}.
\begin{table}[H]
\caption{The errors in $L^{\infty}$-norm versus grid sizes reduction and convergence orders of the corrected difference scheme \eqref{3D_diffu2} for the three-dimensional linear diffusion equation in Example \ref{exam7}}
\vspace{-3mm}
\setlength\tabcolsep{1.2mm}{
\begin{tabular}{|c|cccccc|cccccc|}
\hline
\multicolumn{0}{|c|}{ }&\multicolumn{6}{c|}{\textbf{Case~I}}&\multicolumn{6}{c|}{\textbf{Case~II}}\\
         \cline{2-7}\cline{8-13}\diagbox{R}{P}
         & $m_1$  & $m_2$ & $m_3$ & $n$ & ${E}_{\infty}(h_x,h_y,h_z,\tau)$& ${\rm Ord_G}$ & $m_1$ & $m_2$ & $m_3$ & $n$ & ${E}_{\infty}(h_x,h_y,h_z,\tau)$& ${\rm Ord_G}$  \\
         \cline{1-7}\cline{8-13}
         & $5$  & $5$  & $5$  & $175$   & $4.4376e-6$ & $*$      &$5$ & $50$  & $25$  & $175$   & $4.9047e-6$ & $*$    \\
  $r=1/7$& $10$ & $10$ & $10$ & $700$   & $1.0501e-6$ & $2.0793$ &$10$& $100$ & $50$  & $700$   & $1.1087e-7$ & $2.1453$ \\
         & $20$ & $20$ & $20$ & $2800$  & $2.6476e-7$ & $1.9878$ &$20$& $200$ & $100$ & $2800$  & $2.6789e-7$ & $2.0491$ \\
         & $40$ & $40$ & $40$ & $11200$ & $6.5958e-8$ & $2.0051$ &$40$& $400$ & $200$ & $11200$ & $6.6501e-8$ & $2.0102$ \\
  \hline
         & $5$  & $5$  & $5$  & $150$   & $4.4890e-7$     & $*$               &$5$ & $50$  & $25$  & $150$  & $1.0395e-6$  & $*$             \\
$r=\textbf{1/6}$& $10$ & $10$ & $10$ & $600$& $2.7992e-8$ & $\textbf{4.0033}$ &$10$& $100$ & $50$  & $600$  & $6.6581e-8$  & $\textbf{3.9647}$ \\
         & $20$ & $20$ & $20$ & $2400$  & $1.7891e-9$     & $\textbf{3.9677}$ &$20$& $200$ & $100$ & $2400$ & $4.1608e-9$  & $\textbf{4.0002}$ \\
         & $40$ & $40$ & $40$ & $9600$  & $1.1179e-10$    & $\textbf{4.0003}$ &$40$& $400$ & $200$ & $9600$ & $2.6039e-10$ & $\textbf{3.9981}$ \\
  \hline
         & $5$  & $5$  & $5$  & $100$   & $1.4041e-5$ & $*$      &$5$ & $50$  & $25$  & $100$  & $1.2207e-5$ & $*$    \\
  $r=1/4$& $10$ & $10$ & $10$ & $400$   & $3.5797e-6$ & $1.9717$ &$10$& $100$ & $50$  & $400$  & $3.5630e-6$ & $1.7765$ \\
         & $20$ & $20$ & $20$ & $1600$  & $9.2056e-7$ & $1.9593$ &$20$& $200$ & $100$ & $1600$ & $9.1782e-7$ & $1.9568$ \\
         & $40$ & $40$ & $40$ & $6400$  & $2.3047e-7$ & $1.9979$ &$40$& $400$ & $200$ & $6400$ & $2.3151e-7$ & $1.9871$ \\
  \hline
         & $5$  & $5$  & $5$  & $100$   & $7.6526e-5$ & $*$      &$5$ & $50$  & $25$  & $100$  & $7.9323e-5$ & $*$    \\
$r=1/3.99$& $10$ & $10$ & $10$ & $399$  & $3.6070e-6$ & $4.4071$ &$10$& $100$ & $50$  & $399$  & $3.5901e-6$ & $4.4656$ \\
         & $20$ & $20$ & $20$ & $1596$  & $9.2751e-7$ & $1.9594$ &$20$& $200$ & $100$ & $1596$ & $9.2475e-7$ & $1.9569$ \\
         & $40$ & $40$ & $40$ & $6384$  & $6.5576e-7$ & $0.5002$ &$40$& $400$ & $200$ & $6384$ & $2.8248e-2$ & $-14.899$ \\
  \hline
\end{tabular}\label{tab8a}
}
\end{table}
\vspace{-12mm}
\begin{table}[H]
\begin{center}
\caption{The errors in $L^{\infty}$-norm versus grid sizes reduction and convergence orders of the classical Euler
difference scheme \eqref{scheme_3D_class} for the three-dimensional linear diffusion equation in Example \ref{exam7}}
\vspace{-3mm}
\setlength\tabcolsep{1.35mm}{
\begin{tabular}{|c|cccccc|cccccc|}
\hline
\multicolumn{0}{|c|}{ }&\multicolumn{6}{c|}{\textbf{Case~I}}&\multicolumn{6}{c|}{\textbf{Case~II}}\\
         \cline{2-7}\cline{8-13}\diagbox{R}{P}
         & $m_1$  & $m_2$ & $m_3$ & $n$ & ${E}_{\infty}(h_x,h_y,h_z,\tau)$& ${\rm Ord_G}$ & $m_1$ & $m_2$ & $m_3$ & $n$ & ${E}_{\infty}(h_x,h_y,h_z,\tau)$& ${\rm Ord_G}$  \\
         \cline{1-7}\cline{8-13}
         & $5$  & $5$  & $5$  & $150$  & $1.1712e-4$ & $*$      &$5$ & $50$  & $25$  & $150$  & $4.3049e-4$ & $*$    \\
  $r=1/6$& $10$ & $10$ & $10$ & $600$  & $3.0760e-5$ & $1.9289$ &$10$& $100$ & $50$  & $600$  & $1.1084e-4$ & $1.9574$ \\
         & $20$ & $20$ & $20$ & $2400$ & $7.9604e-6$ & $1.9501$ &$20$& $200$ & $100$ & $2400$ & $2.7742e-5$ & $1.9984$ \\
         & $40$ & $40$ & $40$ & $9600$ & $1.9963e-6$ & $1.9955$ &$40$& $400$ & $200$ & $9600$ & $6.9490e-6$ & $1.9972$ \\
  \hline
         & $5$  & $5$  & $5$  & $148$  & $8.6896e-6$   & $*$       &$5$ & $50$  & $25$  & $148$  & $3.1252e-4$   & $*$     \\
$r=1/5.9$& $10$ & $10$ & $10$ & $590$  & $3.1401e-5$   & $-1.8535$ &$10$& $100$ & $50$  & $590$  & $1.1285e-4$   & $1.4696$  \\
         & $20$ & $20$ & $20$ & $2360$ & $1.9419e+8$   & $-42.492$ &$20$& $200$ & $100$ & $2360$ & $3.2653e+13$  & $-58.006$ \\
         & $40$ & $40$ & $40$ & $9440$ & $2.7614e+108$ & $-332.70$ &$40$& $400$ & $200$ & $9440$ & $1.6859e+114$ & $-334.56$ \\
  \hline
\end{tabular}\label{tab8b}
}
\end{center}
\end{table}
\end{example}
\vspace{-10mm}
\section{Concluding remarks}\label{Sec7}
\setcounter{equation}{0}
\vspace{-3mm}
In closing, we propose a unified framework to construct explicit numerical schemes for convection-diffusion problems in high dimension
based on the forward Euler discretization. We obtain the superconvergence with the step-ratio $r=1/6$ for the corrected difference scheme and display much better numerical behavior, which serve to the theoretical results. Another advantage of the present scheme is that it is an fully explicit numerical method without any matrix by vector operation and pretty convenient in practical implementation.

 Moreover, the corrected difference schemes have essentially improved {\textbf{CFL}} condition and convergence rate of the classical difference scheme, see e.g. \cite{Sun2022} or \cite{LT2003}. The detailed theoretical results with respect to {\rm \textbf{CFL}} conditions and the convergence rate of the corrected difference scheme and classical difference scheme in different settings are listed in Tables \ref{tabConclu1} and \ref{tabConclu2}.

In terms of theoretical analysis, we only discuss the constant convection case in current paper. As for the general nonlinear convection-diffusion equations, it remains an open challenge for the convergence and stability because of the complex discretization of the nonlinear terms, which will leave as the future work.
\begin{table}[H]
\centering
\caption{The {\rm{\textbf{CFL}}} conditions for the diffusion problems in different dimensions with different difference schemes}
\renewcommand\arraystretch{1.4}
\begin{tabular}{|c|c|c|}
 \hline
\diagbox{D}{S}
                     & {\rm \textbf{Corrected difference scheme}} & {\rm \textbf{Classical difference scheme}, \cite{LT2003}}\\
\hline
  ${\rm 1D}$         & $r_x \leq \frac12$, \cite{ZZS2022}  & $r_x \leq \frac12$     \\
  \hline
  ${\rm 2D}$         & $\max\{r_x,r_y\} \leq \frac12$      & $r_x+r_y \leq \frac12$     \\
  \hline
  ${\rm 3D}$         & $\max\{r_x,r_y,r_z\} \leq \frac14$  & $r_x+r_y+r_z \leq \frac12$     \\
  \hline
\end{tabular}\label{tabConclu1}
\begin{tablenotes}
\footnotesize
\item[1]   $*$ ``S'' denotes the difference scheme and ``D'' the dimension.
\end{tablenotes}
\end{table}
\vspace{-10mm}
\begin{table}[H]
\centering
\caption{The convergence rates for the diffusion problems in different dimensions with different difference schemes}
\renewcommand\arraystretch{1.4}
\begin{tabular}{|c|cc|cc|}
\hline
\multicolumn{0}{|c|}{ }&\multicolumn{2}{c|}{{\rm \textbf{Corrected difference scheme}}}&\multicolumn{2}{c|}{\rm \textbf{Classical difference scheme}, \cite{LT2003}}\\
         \cline{2-3}\cline{4-5}\diagbox{D}{S}
          & $\qquad r\neq \frac16$ & $r=\frac16$  & $\qquad r\neq\frac16$& $r=\frac16$  \\
         \cline{1-3}\cline{4-5}
  ${\rm 1D}$ & $\qquad 2$, \cite{ZZS2022}  & $4$, \cite{ZZS2022}    & $\qquad 2$    & $2$   \\
 \hline
  ${\rm 2D}$ & $\qquad 2$  & $4$    & $\qquad 2$    & $2$     \\
 \hline
  ${\rm 3D}$ & $\qquad 2$  & $4$    & $\qquad 2$    & $2$  \\
  \hline
\end{tabular}\label{tabConclu2}
\end{table}

\begin{acknowledgements}
The first author is very grateful to Dr. Zhifeng Weng in Huaqiao Unviersity for his useful suggestion and comments for the nonlinear problem.
\end{acknowledgements}

\end{document}